\numberwithin{equation}{section}%
\def\R{\mathbb{R}}
\def\N{\mathbb{N}}
\def\bR{\bar{\mathbb{R}}}
\def\dom{\operatorname{dom}}
\newcommand{\rank}[1]{\textrm{rank}(#1)}
\newcommand{\norm}[1]{\| #1 \|}
\newcommand{\yck}{{y_{reg}^k}}
\newcommand{\Xck}{{X_{reg}^k}}
\newcommand{\Yck}{{Y_{reg}^k}}
\newcommand{\setRank}{{\mathcal{C}_r}}
\newcommand{\setNorm}{{\mathcal{C} }}
\newcommand{\proj}{\mathcal{P}}
\newcommand{\projRank}{{\mathcal{P}_{\mathcal{C}_r}}}
\newcommand{\projNorm}{{\mathcal{P}_{\mathcal{C} }}}
\newcommand{\cZ}{G}
\newcommand{\ML}{G}
\newcommand{\prox}{\operatorname{prox}}
\newcommand{\iif}{\Leftrightarrow}
\def\diag{\text{diag}}
\newcommand{\Tsvd}[1]{[ #1 ]_1}
\newcommand{\tsvd}[1]{[ #1 ]_2}
\newcommand{\REV}[1]{\textcolor{black}{#1}}
\newcommand{\ste}[1]{\textcolor{black}{#1}}
\theoremstyle{thmstyleone}%
\newtheorem{theorem}{Theorem}[section]% meant for sectionwise numbers
\newtheorem{lemma}[theorem]{Lemma}%
\newtheorem{corollary}[theorem]{Corollary}%
\theoremstyle{thmstyletwo}%
\newtheorem{remark}{Remark}[section]%
\theoremstyle{thmstylethree}%
\newtheorem{definition}{Definition}[section]%
\begin{document}

\title[Inexact AP Method]{An inexact alternating projection method with application to  matrix completion}

%%=============================================================%%
%% GivenName	-> \fnm{Joergen W.}
%% Particle	-> \spfx{van der} -> surname prefix
%% FamilyName	-> \sur{Ploeg}
%% Suffix	-> \sfx{IV}
%% \author*[1,2]{\fnm{Joergen W.} \spfx{van der} \sur{Ploeg} 
%%  \sfx{IV}}\email{iauthor@gmail.com}
%%=============================================================%%

\author*[1,4]{\fnm{Stefania} \sur{Bellavia}}\email{stefania.bellavia@unifi.it}
%\equalcont{These authors contributed equally to this work.}

\author[2,4]{\fnm{Simone} \sur{Rebegoldi}}\email{simone.rebegoldi@unimore.it}
%\equalcont{These authors contributed equally to this work.}

\author[3,4]{\fnm{Mattia} \sur{Silei}}\email{mattia.silei@unifi.it}
%\equalcont{These authors contributed equally to this work.}

\affil*[1]{\orgdiv{Dipartimento di Ingegneria Industriale (DIEF)}, \orgname{Universit\`a degli Studi di Firenze}, \orgaddress{\street{Viale G.B. Morgagni 40}, \city{Firenze}, \postcode{50134}, \state{Italia}}}

\affil[2]{\orgdiv{Dipartimento di Scienze Fisiche, Informatiche e Matematiche}, \orgname{Universit\`a degli Studi di Modena e Reggio Emilia}, \orgaddress{\street{Via Campi 213/B}, \city{Modena}, \postcode{41125}, \state{Italia}}}

\affil[3]{\orgdiv{Dipartimento di Matematica e Informatica (DIMAI)}, \orgname{Universit\`a degli Studi di Firenze}, \orgaddress{\street{viale G.B. Morgagni 65}, \city{Firenze}, \postcode{50134}, \state{Italia}}}

\affil[4]{\orgdiv{INdAM Research Group GNCS}}

%%==================================%%
%% Sample for unstructured abstract %%
%%==================================%%

\abstract{
We develop and analyze  an inexact regularized alternating projection method for nonconvex feasibility problems. Such a method employs inexact projections on one of the two sets, according to a set of well-defined conditions. We prove the global convergence of the algorithm, provided that a certain merit function satisfies the Kurdyka-\L{}ojasiewicz property on its domain.
The method is then specialized to the class of affine rank minimization problems, which includes matrix completion as a special case.
We approximate the truncated  Singular Value Decomposition of the matrix that has to be projected by means of a Krylov solver, and provide suitable  stopping criteria for the Krylov method complying with the theoretical inexactness conditions. The information needed to implement such stopping criteria do not require an extra computational effort as they 
are a by-product of the Krylov method itself and avoid the so called oversolving phenomena. 
Results of the numerical validation of the algorithm on matrix completion problems are presented.
}

\keywords{affine rank minimization, alternating projections, inexact projections, krylov subspaces, matrix completion}

%%\pacs[JEL Classification]{D8, H51}

%%\pacs[MSC Classification]{35A01, 65L10, 65L12, 65L20, 65L70}

\maketitle

%%%%%%%%%%%%%%%%
% Introduction %
%%%%%%%%%%%%%%%%
\section{Introduction}

This paper is concerned with regularized \REV{alternating} projection methods for computing  a point in the intersection of two nonconvex sets $A$ and $B$, namely
\begin{equation}\label{eq:problem}
    \text{find }x\text{ such that }x\in A \cap B\quad\quad A,B \subset \R^n.
\end{equation}
The alternating projection method (APM) dates back to the seminal paper \cite{Neumann-etal-1950} by J. von Neumann in 1950. Starting from a point in the first set, APM generates a sequence of points by alternately projecting onto the two sets. The method converges globally when $A,B$ are convex \cite{bauschke1996projection}, and a rich literature has been devoted to the convergence analysis of APM in the convex case \cite{bauschke1993, bauschke1994, bauschke1996projection, escalanteraydan, santos2021}. When one of the sets is nonconvex, APM converges only locally and further hypotheses on the sets and their intersection are required \cite{drusvyatskiyPhDthesis2013,drusvyatskiy2015transversality,lewis2008alternating,lewis2009local,Noll-etal-2016}.

\subsection{Contribution of the paper}
We focus on a regularized variant of APM (RAPM) proposed in \cite{attouch2010proximal}, \REV{for which }
global convergence \REV{holds} even in the case of nonconvex sets. \REV{RAPM requires exact projections on both sets; hence, it is not applicable when} the projection on one of the sets \REV{is exact}, while it is not possible or too expensive to compute the exact projection onto the other set.  We design a theoretically well-founded inexact variant of RAPM, denoted by iRAPM, that allows to use inexact projections on the latter set. Inspired by \cite{Birgin2003,Ferreira-2022,Bonettini-Ochs-Prato-Rebegoldi-2023}, we introduce suitable conditions  that the inexact projection has to satisfy in order to preserve the theoretical properties of the exact counterpart RAPM. 

Our main motivation for this study lies in the application of alternating projection methods to Matrix Completion (MC) problems and, \REV{more generally,} affine rank minimization problems, which consist in finding a matrix $Z\in \R^{n_1\times n_2}$ of minimum rank that satisfies a given system of linear equations:
 \begin{equation}
    \label{affine_rank_min}
    \begin{aligned}
        \min \quad & \textrm{rank}(Z), \\    
        \textrm{s.t. } {\cal A}(Z) = b,
    \end{aligned} 
\end{equation}
where ${\cal A}: \R^{n_1\times n_2}\rightarrow \R^{q} $ is a linear map and $b \in \R^q$. 
Problem \eqref{affine_rank_min} encompasses the MC problem, system identification and control problems, Euclidean embedding, and collaborative filtering \cite{Fazel_review}.
 
Assuming to know the sought rank of the matrix $Z$, say $r$, we reformulate the affine rank minimization problem as the problem of finding a matrix in the intersection of two sets: the set \REV{$\mathcal{C}$} of matrices satisfying the linear constraints and the \textit{rank level set}, that is, the nonconvex set \REV{$\mathcal{C}_r$} of matrices of rank less than or equal to $r$. Then, we specialize \REV{APM, RAPM and} iRAPM to this latter class of problems. First, we show that the two sets \REV{$\mathcal{C}$ and $\mathcal{C}_r$} satisfy the assumptions that guarantee \REV{the convergence of the three methods}, thus providing a theoretical support to the employment of APMs for solving  affine rank minimization problems. Second, we focus on the computation of the projection onto the rank level set, namely the truncated Singular Value Decomposition (SVD) of the matrix that has to be projected. This is carried out by an iterative method, with an inherent truncation error. In the large scale setting, this is an expensive task and iterative Krylov methods are used \cite{saad2011numerical} in order to avoid \REV{full} matrix factorizations. Suitable stopping criteria for this inner iterative process are therefore of crucial importance in order to save Krylov inner iterations, while preserving the same convergence properties as RAPM. \REV{We propose novel stopping criteria for the Krylov method that comply with the theoretical conditions imposed in iRAPM.} The information needed to implement such stopping criteria does not require  extra computational effort as it is a by-product of the Krylov method itself. 

Some numerical experiments on matrix completion problems to validate the convergence results, the proposed stopping criteria and the performance of the studied methods will be also shown.

\subsection{Related works}
Early general results on inexact APM schemes in \cite{kruger} emerge naturally from the convergence theory of the exact algorithm, but their application to common feasibility problems is not investigated. In the works \cite{ferreira_inexact_2021,drusvyatskiy2019local}, inexact APMs are proposed for problems where, similarly to our setting, the projection onto one of the two sets can be computed exactly while the other projection is inexact. On the one hand, in \cite{ferreira_inexact_2021}, the authors propose an APM where the projection onto one of the sets is approximately computed by means of the conditional gradient method.
On the other hand, in \cite{drusvyatskiy2019local}, inexact projections are considered onto two possible sets; the set of the solution of mixed equations and inequalities and a certain smooth manifold given by the image under a nonlinear map of an open set containing the iterates. Inexact projections are also considered in \cite{luke2013prox} where the author proposes a theoretical inexactness criterion based on normal cones.
We also highlight that both \cite{drusvyatskiy2019local} and \cite{luke2013prox} consider easy to project onto the rank level set since the projection of a matrix $Z$  is given by its  truncated SVD.
Our point of view is different, as the computation of the SVD is expensive and inherently inexact, and differently from \cite{luke2013prox}, we provide inexactness conditions and corresponding stopping criteria for the inner Krylov solver that can be implemented without any additional cost, allowing for the reduction of computational times. Such criteria are inspired by the works \cite{Bonettini-Prato-Rebegoldi-2020,Bonettini-Prato-Rebegoldi-2021,Bonettini-Ochs-Prato-Rebegoldi-2023}, where implementable inexactness conditions for computing proximal maps are considered in the context of inexact forward--backward algorithms. Unlike these previous works, we cannot rely on the convexity of the sets involved in problem \eqref{eq:problem}; for that reason, our definition of inexact projection will be slightly more restrictive.

The APM is widely applied to the Matrix Completion problem under different names. For example, the Expectation-Maximization method is used in statistics for parameters estimation with missing data, and,  when data have a matrix structure, some of its variants are equivalent to APM \cite{guerreiro2003estimation}. In the context of denoising and recovering missing data in time series, the Cadzow's algorithm is well studied \cite{wang2021fast}. It consists in wrapping the time series in a Hankel matrix, applying an SVD truncation to denoise the data, and imputing the missing values.  In its basic form, it consists in alternating the same two steps of APM. \REV{Note that the convergence results for RAPM when applied to the rank affine minimization problem provide also a theoretical support to regularized variants of the above approaches.}

\subsection{Outline}
The paper is organised as follows.
After an introduction to APM and RAPM given in Section 2, the iRAPM is proposed and its convergence analyzed in Section 3. Then, the method is specialized to the affine rank minimization problem in Section 4, where the new stopping criteria for the Krylov method used to compute the truncated SVD are presented, see Sections 4.2 and 4.3. In Section 5, numerical results on matrix completion problems are provided showing that the inexact approach is more efficient than the exact counterpart. 

%%%%%%%%%%%%%%%%%
% Preliminaries %
%%%%%%%%%%%%%%%%%
\section{Preliminaries}

In this section, we first introduce the notations and basic definitions used in the present work. Then, we recall the APM and RAPM algorithms and state their main convergence results in the case when one of the two sets is nonconvex.

\subsection{Notations and basic definitions} 
The symbol $\|\cdot\|$ denotes either the standard Euclidean norm on $\R^n$ or the Frobenius norm on $\R^{n_1\times n_2}$, depending on the context.  \REV{Analogously, $\langle \cdot,\cdot\rangle $ denotes either the Euclidean or the Frobenius inner product}.
Given two vectors $x,y \in \R^n$, we will use the notation $(x,y)$ as a shorthand for $(x^T,y^T)^T$. Given a non-empty, closed subset $A\subseteq \R^n$, the indicator function $\iota_A:\R^n\rightarrow \bR$ is given by
\begin{equation*}
    \iota_{A}(x)=\begin{cases}0, \quad &\text{if }x\in A\\
    +\infty, \quad &\text{otherwise},
    \end{cases}
\end{equation*}
where $\bR=\R\cup \{+\infty,\-\infty\}$ is the extended real numbers set.

Given a function $f: \R^n\rightarrow \bR$ and a set $A\subseteq \R^n$, $\underset{x\in A}{\operatorname{argmin}} \ f(x)$ denotes  the set of global minimum points of $f$ over the set $A$. The projection operator $\proj_A:\R^n\rightrightarrows \R^n$ onto the set $A$ is defined by
\begin{equation*}
    \proj_A(x) = \underset{y\in A}{\operatorname{argmin}} \ \frac{1}{2}\|y-x\|^2 = \underset{y\in \R^n}{\operatorname{argmin}} \ \frac{1}{2}\|y-x\|^2+\iota_A(y). 
\end{equation*}
We say that $A\subseteq \R^n$ is \emph{prox-regular} at a point $\bar{x}\in A$ if $\proj_A$ is single-valued around $\bar{x}$ \cite[Definition 2.6]{luke2013prox}. A set $A\subseteq \R^n$ is called \emph{semialgebraic} if it assumes the form \cite{Bolte2007}
\begin{equation}\label{eq:semialgebraic}
A = \bigcup_{i=1}^p\bigcap_{j=1}^q\{x\in\R^n: \  f_{i,j}(x) = 0, \ g_{i,j}(x) <0 \},
\end{equation}
where $f_{i,j},g_{i,j}:\R^n\rightarrow \R$ are polynomial functions for all $1\leq i \leq p$, $1\leq j\leq q$. Likewise, a function $f:\R^n\rightarrow\bR$ is called semialgebraic if its graph is a semialgebraic subset of $\R^n\times \R$.

Given a proper, lower semicontinuous function $f:\R^n\rightarrow \bR$, the proximal operator of $f$ is denoted by $\prox_f:\R^n\rightrightarrows \R^n$ and defined as
\begin{equation*}
    \prox_f(x)=\underset{y\in\R^n}{\operatorname{argmin} } \ \frac{1}{2}\|y-x\|^2+f(y).
\end{equation*}
Note that, for any non-empty, closed subset $A\subseteq \R^n$, we have $\proj_A(x) = \prox_{\iota_A}(x)$.

Let $f:\R^n\to\bR$ and $\bar{x}\in\dom(f)$. The \emph{Fr\'{e}chet subdifferential} and the \emph{limiting subdifferential} of $f$ at $\bar{x}$ are defined, respectively, as the sets \cite[Definition 8.3]{Rockafellar-Wets-1998}
\begin{align*}
\hat\partial f(\bar{x}) &= \left\{w\in\R^n : \liminf_{x\to \bar{x},x\neq \bar{x}} \frac {f(x)-f(\bar{x})-(x-\bar{x})^Tw}{\|x-\bar{x}\|}\geq 0\right\},\\
\partial f(\bar{x}) &= \{w\in\R^n : \exists \  x^k\to \bar{x}, \ f(x^k)\to f(\bar{x}), \ w^k \in \hat\partial f(x^k)\to w \ {\text{as}} \ k\to \infty\}.
\end{align*}
A point $\bar{x}\in\R^n$ is said to be \emph{stationary} for $f:\R^n\rightarrow \bR$ if $0\in\partial f(\bar{x})$. {We denote with $\|\partial f(x)\|_{-} = \inf_{v\in\partial f(x)}\|v\|$ the lazy slope of $f$ at $x$.}

Given a matrix $G \in \R^{n_1 \times n_2}$,  we denote its  Singular Value Decomposition (SVD) as
\[
G = U \Sigma V^T,
\]
where $U\in \R^{n_1 \times n_1}$, $V\in \R^{n_2 \times n_2}$ are orthonormal matrices and $\Sigma \in \R^{n_1 \times n_2}$ is the rectangular diagonal matrix \ste{whose diagonal entries $\sigma_1, \dots, \sigma_{\min(n_1,n_2)}$} are the singular values of $G$.
We will assume that the singular values are in decreasing order, that is, $\sigma_i \ge \sigma_{i+1}, \ i=1,\dots,\min(n_1,n_2)-1$. 
Given $1 \le r \le \min \{ n_1, n_2 \}$, we also consider the following partitions of matrices $U, V, \Sigma$: $U = [U_1 \ U_2]$, $V = [V_1 \ V_2]$, and $\Sigma = \diag(\Sigma_1, \Sigma_2)$, with $U_1 \in \R^{n_1 \times r}, U_2 \in \R^{n_1 \times (n_1-r)}, V_1 \in \R^{n_2 \times r}, V_2 \in \R^{n_2 \times (n_2-r)}, \Sigma_1 \in \R^{r \times r}, \Sigma_2 \in \R^{(n_1-r) \times (n_2-r)}$. With these partitions we can write
\begin{equation} \label{eq:svd_part}
    G = U_1 \Sigma_1 V_1^T + U_2 \Sigma_2 V_2^T = \Tsvd{G} + \tsvd{G}.
\end{equation}
The matrix $\Tsvd{G} = U_1 \Sigma_1 V_1^T$ is the {truncated SVD} to the first $r$ singular values, and in the following we will refer to it as $r$-truncated SVD. 

\subsection{Alternating projection methods for nonconvex feasibility problems}
Given two non empty, closed subsets $A\subseteq \R^n$, $B\subseteq \R^n$, we consider the feasibility problem \eqref{eq:problem} and the related minimization problem
\begin{equation}\label{eq:problem_L}
    \underset{(x,y)\in\R^n\times \R^n}{\operatorname{argmin}} \ L(x,y) \equiv \frac{1}{2}\|x-y\|^2+\iota_A(x)+\iota_B(y).
\end{equation}
The classical {\it alternating projection method} (APM) for solving \eqref{eq:problem} was first presented by J. von Neumann \cite{Neumann-etal-1950}. Such a method can be seen as the standard alternating minimization (or Gauss-Seidel) method applied to problem \eqref{eq:problem_L}, where the function is cyclically minimized with respect to each block of variables, while keeping the other one fixed. Then, given $y^0\in B$, it generates two sequences $\{x^k\}_{k\in\N}\subseteq A$ and $\{y^k\}_{k\in\N}\subseteq B$ defined as follows:
\begin{align} 
    x^{k+1}&\in \underset{x\in\R^n}{\operatorname{argmin}} \ L(x,y^k)=\underset{x\in\R^n}{\operatorname{argmin}} \ \frac{1}{2}\|x-y^k\|^2+\iota_A(x)=\proj_A(y^k)\label{APM1}\\
    y^{k+1}&\in \underset{y\in\R^n}{\operatorname{argmin}} \ L(x^{k+1},y)=\underset{y\in\R^n}{\operatorname{argmin}} \ \frac{1}{2}\|x^{k+1}-y\|^2+\iota_B(y)=\proj_B(x^{k+1}) \label{APM2}
\end{align}

Although global convergence of method \eqref{APM1}-\eqref{APM2} to a solution of problem \eqref{eq:problem} holds whenever $A$ and $B$ are convex, the same property can not be guaranteed in the case where either one of or both sets are nonconvex \cite{bauschke2013, Noll-etal-2016}. On the other hand, the local convergence properties of APM in the nonconvex setting have been investigated {under additional assumptions on the sets of interest. In \cite{lewis2008alternating}, the authors prove the local linear convergence rate of APM whenever two smooth manifolds intersect transversally. This result is further generalized in \cite{lewis2009local} for any two sets intersecting transversally, provided that at least one of them is superregular at an intersecting point. In \cite{drusvyatskiyPhDthesis2013} (see also the subsequent work \cite{drusvyatskiy2015transversality}), the authors remove the superregularity assumption and prove the local linear convergence of APM by only requiring transversality (or a slightly weaker version of it). We note that, in practice, checking whether transversality holds requires the knowledge of a point in the intersection, even for semialgebraic sets \cite[p. 1648]{drusvyatskiy2015transversality}. In \cite{Noll-etal-2016}, the authors prove the local convergence of APM at a sublinear rate, by assuming that the two sets intersect separably and  at least one of them is $\sigma-$\emph{H\"older regular}, with $\sigma \in[0,1)$. Although the convergence rate in \cite{Noll-etal-2016} is weaker than those obtained in \cite{drusvyatskiyPhDthesis2013,drusvyatskiy2015transversality,lewis2009local,lewis2008alternating}, the separable intersection condition automatically holds when the sets are semialgebraic \cite[Theorem 3]{Noll-etal-2016}, which is the case of interest for affine rank minimization problems (see Section~\ref{sec:application} for an in-depth analysis).}

The definitions of separable intersection and H\"older regularity, as well as the local convergence result derived in \cite{Noll-etal-2016}, are reported below.

\begin{definition}\label{def:separable_intersection}\cite[Definition 1]{Noll-etal-2016}
    Two sets $A$ and $B$ intersect separably at $\bar{x}\in A\cap B$ with exponent $\omega\in[0,2)$ and constant $\gamma>0$ if there exists a neighbourhood $\mathcal{U}$ of $\bar{x}$ such that, for any points $a\in A$, $b\in\mathcal{P}_B(a)$, $a^+\in\mathcal{P}_A(b)$, $b^+\in\mathcal{P}_B(a^+)$ belonging to $\mathcal{U}$, the following condition holds
    \begin{equation*}
        \langle b-a^+,b^+-a^+\rangle\leq (1-\gamma\|b^+-a^+\|^\omega)\|b-a^+\|\|b^+-a^+\|.
    \end{equation*}
\end{definition}

\begin{definition}\label{def:holder}\cite[Definition 2]{Noll-etal-2016}
    Let $\sigma\in [0,1)$. The set $B$ is $\sigma-$\emph{H\"older regular} with respect to $A$ at $b^*\in A\cap B$ if there exists a neighborhood $\mathcal{U}$ of $b^*$ and a constant $c>0$ such that for every $a^+\in A\cap \mathcal{U}$ and $b\in\proj_B(a^+)\cap \mathcal{U}$, setting $r = \|a^+-b^+\|$, one has
    \begin{equation*}
        \mathcal{B}(a^+,(1+c)r)\cap \{b\in \proj_A(a^+)^{-1}: \ \langle a^+-b^+,b-b^+\rangle > \sqrt{c}r^{\sigma+1}\|b-b^+\|\}\cap B =\emptyset,
    \end{equation*}
    where $\mathcal{B}(a^+,(1+c)r)$ denotes the ball of center $a^+$ and radius $(1+c)r$.
\end{definition}

\begin{theorem}\cite[Theorem 1, \REV{Corollary 4}]{Noll-etal-2016}\label{thm:conv_APM} (Local convergence)
    Suppose $B$ intersects $A$ separably at $\bar{x}\in A \cap B$ with exponent $\omega\in\REV{(}0,2)$ and constant $\gamma>0$. Assume also that $B$ is $\omega/2-$H\"older regular at $\bar{x}$ with respect to $A$ and constant $c<\frac{\gamma}{2}$. Then there exists a neighborhood $\mathcal{V}$ of $\bar{x}$ such that any sequences $\{x^k\}_{k\in\mathbb{N}}$ and $\{y^k\}_{k\in\mathbb{N}}$ generated by \eqref{APM1}-\eqref{APM2} with $y^0\in \mathcal{V}$ converges to a point $\bar{b}\in A\cap B$. \REV{Furthermore, $\|x^k-\bar{b}\|=\mathcal{O}(k^{-\frac{2-\omega}{2\omega}})$ and $\|y^k-\bar{b}\|=\mathcal{O}(k^{-\frac{2-\omega}{2\omega}})$.}
\end{theorem}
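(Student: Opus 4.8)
The plan is to reduce the theorem to a one-dimensional recursion on the gap between the two sets along the iterates, for which the separable-intersection and Hölder-regularity hypotheses are precisely tailored. First I would set $\alpha_k=\mathrm{dist}(y^k,A)=\|x^{k+1}-y^k\|$ and $\beta_{k+1}=\mathrm{dist}(x^{k+1},B)=\|y^{k+1}-x^{k+1}\|$, and record the elementary monotonicity
\begin{equation*}
\alpha_0\ge\beta_1\ge\alpha_1\ge\beta_2\ge\cdots,
\end{equation*}
which is immediate from the optimality of each projection in \eqref{APM1}--\eqref{APM2} (test $\proj_A(y^{k+1})$ against $x^{k+1}$ and $\proj_B(x^{k+1})$ against $y^k$). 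Hence the gaps decrease to a limit $\delta\ge0$, and the whole argument is organised around proving $\delta=0$, convergence of the iterates, and the stated rate. Throughout I would fix a neighbourhood $\mathcal{U}$ of $\bar x$ on which both Definition~\ref{def:separable_intersection} and Definition~\ref{def:holder} are in force, and work under the standing assumption — justified last — that every iterate stays in $\mathcal{U}$.

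Next I would extract the quantitative content of the hypotheses. Applying Definition~\ref{def:separable_intersection} to the consecutive points $a=x^{k+1}$, $b=y^{k+1}$, $a^+=x^{k+2}$, $b^+=y^{k+2}$ and expanding $\|y^{k+1}-y^{k+2}\|^2$ by the law of cosines turns the separable inequality into
\begin{equation*}
\|y^{k+1}-y^{k+2}\|^2\ \ge\ (\alpha_{k+1}-\beta_{k+2})^2+2\gamma\,\beta_{k+2}^{\,\omega}\,\alpha_{k+1}\,\beta_{k+2},
\end{equation*}
equivalently that the cosine of the angle at $a^+$ between $b-a^+$ and $b^+-a^+$ is at most $1-\gamma\beta_{k+2}^{\,\omega}$: the iterates keep ``turning'' by a definite amount as long as the gap is positive. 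The complementary estimate comes from Hölder regularity with $\sigma=\omega/2$ (Definition~\ref{def:holder}), which rules out the configurations in which the new $B$-point subtends too shallow an angle and thereby converts the angle condition into a genuine contraction of the gap. Merging the two — the crux of the proof — I expect to arrive at a recursion of the form
\begin{equation*}
\beta_{k+1}\ \le\ \beta_k-\kappa\,\beta_k^{\,1+p},\qquad p=\frac{2\omega}{2-\omega},
\end{equation*}
for some $\kappa>0$; the hypothesis $c<\gamma/2$ on the Hölder constant is exactly what keeps $\kappa$ strictly positive once the leading-order terms of the two inequalities cancel.

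From here the rate is routine: the standard discrete comparison lemma applied to $\beta_{k+1}\le\beta_k-\kappa\beta_k^{1+p}$ yields $\beta_k=\mathcal{O}(k^{-1/p})=\mathcal{O}(k^{-(2-\omega)/(2\omega)})$, so in particular $\delta=0$. To upgrade this into convergence of the iterates to a single $\bar b$ I would prove finite length, $\sum_k\|x^{k+1}-x^k\|<\infty$ and likewise for $\{y^k\}$. This rests on a sufficient-decrease inequality $\|x^{k+1}-x^k\|^2+\|y^{k+1}-y^k\|^2\le \beta_k^2-\beta_{k+1}^2$, valid locally because the sets are prox-regular near $\bar x$, together with a relative-error estimate bounding a subgradient of $L$ at $(x^{k+1},y^{k+1})$ by the step; the recursion then plays the role of a \L{}ojasiewicz inequality for the merit function $L$ of \eqref{eq:problem_L}, and the self-absorbing summation of Attouch--Bolte--Svaiter type delivers finite length for the entire range $\omega\in(0,2)$, even though $\sum_k\beta_k$ may diverge. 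Finite length makes $\{x^k\}$ Cauchy, hence convergent to some $\bar b$; closedness of $A,B$ together with $\alpha_k,\beta_k\to0$ forces $\bar b\in A\cap B$, and the tail bound $\|x^k-\bar b\|\le\sum_{j\ge k}\|x^{j+1}-x^j\|$ inherits the rate $\mathcal{O}(k^{-(2-\omega)/(2\omega)})$.

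The part deserving the most care is twofold. The analytic difficulty is fusing Definitions~\ref{def:separable_intersection} and \ref{def:holder} into the single descent recursion with the correct exponent $p$ and a positive constant: the Hölder inequality must be invoked at the right triple of iterates and balanced against the separable bound so that the cancellation leaves a factor proportional to $\gamma/2-c>0$. The topological difficulty is the localisation — both inequalities hold only on $\mathcal{U}$, so $\mathcal{V}$ must be chosen small enough that the cumulative displacement, controlled by the same finite-length bound, never drives an iterate out of $\mathcal{U}$. I would close this by a bootstrapping induction: membership in $\mathcal{U}$ through step $k$ licenses the recursion up to $k$, which bounds the total travel by a multiple of the initial gap, and shrinking $\mathcal{V}$ makes that multiple smaller than $\mathrm{dist}(\bar x,\partial\mathcal{U})$, closing the induction.
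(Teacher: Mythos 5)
First, a framing remark: the paper gives no proof of Theorem~\ref{thm:conv_APM} --- it is imported verbatim from \cite{Noll-etal-2016} (Theorem 1 and Corollary 4 there) --- so your attempt has to be measured against the proof in that source. The first half of your plan does match it: the monotonicity chain $\alpha_0\ge\beta_1\ge\alpha_1\ge\cdots$ is correct, and the source likewise expands the separable-intersection inequality by the law of cosines and plays it against $\omega/2$-H\"older regularity, with $c<\gamma/2$ ensuring a strictly positive net decrease. But your quantitative output is wrong. H\"older regularity, applied with $b=y^{k}$, $a^+=x^{k+1}$, $b^+=y^{k+1}$ and $r=\|a^+-b^+\|$, gives the \emph{step upper bound}
\begin{equation*}
\|b-b^+\|^2\ \le\ \|b-a^+\|^2-r^2+2\sqrt{c}\,r^{1+\omega/2}\|b-b^+\|,
\end{equation*}
which, against the separable lower bound, produces a gap recursion of multiplicative type with exponent $\omega$, i.e.\ $r_{k+1}\le(1-\kappa r_k^{\omega})\,r_k$, whence the \emph{gap} decays like $\mathcal{O}(k^{-1/\omega})$ --- strictly faster than your claimed $\mathcal{O}(k^{-(2-\omega)/(2\omega)})$, which is an artifact of your exponent $1+p$ with $p=2\omega/(2-\omega)$. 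The advertised rate $k^{-(2-\omega)/(2\omega)}$ is not the rate of the gap but of the tails $\sum_{j\ge k}\|y^{j+1}-y^j\|$: the step bound yields $\|y^{j+1}-y^j\|\lesssim\sqrt{r_j^2-r_{j+1}^2+c\,r_j^{2+\omega}}+\sqrt{c}\,r_j^{1+\omega/2}$, the term $\sum_{j\ge k}r_j^{1+\omega/2}$ with $r_j\sim j^{-1/\omega}$ has exponent $(2+\omega)/(2\omega)-1=(2-\omega)/(2\omega)$, and the telescoping part is absorbed by Cauchy--Schwarz over dyadic blocks, $\sum_{j=k}^{2k}\sqrt{r_j^2-r_{j+1}^2}\le\sqrt{k}\,r_k\sim k^{-(2-\omega)/(2\omega)}$. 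Finite length thus falls out of the two geometric inequalities alone.

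This exposes the genuine gap in your proposal: the entire finite-length step rests on tools the theorem does not grant you. Prox-regularity is not a hypothesis here --- $\omega/2$-H\"older regularity is strictly weaker, and dispensing with stronger regularity is precisely the point of \cite{Noll-etal-2016}. The sufficient-decrease inequality you posit, $\|x^{k+1}-x^k\|^2+\|y^{k+1}-y^k\|^2\le\beta_k^2-\beta_{k+1}^2$, is a firm-nonexpansiveness identity valid for projections onto \emph{convex} sets and fails in the nonconvex setting of this theorem. Likewise, no KL assumption on $L$ appears among the hypotheses, and even if it did, the unregularized iteration \eqref{APM1}--\eqref{APM2} does not satisfy the sufficient-decrease and relative-error conditions of the Attouch--Bolte--Svaiter template --- that deficiency is exactly why \cite{attouch2010proximal} adds the proximal terms in RAPM (Algorithm~\ref{algo:APM_regu}), and why Theorem~\ref{thm:conv_APM_regu} (KL-based, global) and Theorem~\ref{thm:conv_APM} (geometric, local) are two genuinely different results in this paper. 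Your localization bootstrap is fine in spirit --- the source also keeps iterates in $\mathcal{U}$ by induction --- but it must be driven by the direct finite-length estimate above, not by the KL machinery.
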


In \cite{attouch2010proximal,attouch2007}, the authors investigate on the convergence of a regularized modification of APM, obtained by proximal modification of the two alternating steps. Given two strictly positive sequences $\{\lambda_k\}_{k\in\N}$ and $\{\mu_k\}_{k\in\N}$, the regularized APM generates two sequences $\{x^k\}_{k\in\N}\subseteq A$ and $\{y^k\}_{k\in\N}\subseteq B$  such that:
\begin{align*}
     x^{k+1}&\in \prox_{\lambda_k L(\cdot,y^k)}(x^k)=\underset{x\in \R^n}{\operatorname{argmin}} \ \frac{1}{2}\|x-y^k\|^2+\frac{1}{2\lambda_k}\|x-x^k\|^2+\iota_A(x), \\
    y^{k+1}&\in \prox_{\mu_k L(x^{k+1},\cdot)}(y^k)=\underset{y\in \R^n}{\operatorname{argmin}} \ \frac{1}{2}\|y-x^{k+1}\|^2+\frac{1}{2\mu_k}\|y-y^k\|^2+\iota_B(y).
\end{align*}

By means of some easy algebraic manipulations, we can interpret the points $x^{k+1}$ and $y^{k+1}$ as projections onto $A$ and $B$, respectively, of weighted means of the current iterates, i.e.,
\begin{equation*}
     x^{k+1}\in\proj_A\left(\frac{1}{\lambda_k+1}x^k+\frac{\lambda_k}{\lambda_k+1}y^k\right), \quad y^{k+1}\in \proj_B\left(\frac{1}{\mu_k+1}y^k+\frac{\mu_k}{\mu_k+1}x^{k+1}\right).
\end{equation*}
The regularized APM (RAPM)  is sketched  in Algorithm~\ref{algo:APM_regu}.

\begin{algorithm}[h!]\caption{Regularized Alternating Projection Method (RAPM)\cite {attouch2010proximal,attouch2007}}\label{algo:APM_regu}
    Choose $(x^0,y^0)\in A\times B$, 
    $0<\lambda_{-}<\lambda_+$, $0<\mu_{-}<\mu_+$, $\{\lambda_k\}_{k\in\mathbb{N}}\subseteq [\lambda_{-},\lambda_+]$, $\{\mu_k\}_{k\in\mathbb{N}}\subseteq [\mu_{-},\mu_{+}]$.
    
    {\textsc{For} $k=0,1,\ldots$}
    \begin{itemize}[leftmargin=2cm]
        \item[{\sc Step 1}] Compute $x^{k+1} \in \proj_A\left(\frac{1}{\lambda_k+1}x^k+\frac{\lambda_k}{\lambda_k+1}y^k\right)$.
        \item[{\sc Step 2}] Compute $y^{k+1} \in \proj_B\left(\frac{1}{\mu_k+1}y^k+\frac{\mu_k}{\mu_k+1}x^{k+1}\right)$.
    \end{itemize}
\end{algorithm}
In \cite{attouch2010proximal}, the authors carry out a convergence analysis for RAPM in the nonconvex setting, by just assuming that the function $L$ satisfies the so-called Kurdyka-\L{}ojasiewicz inequality, which is reported below for the reader's convenience.

\begin{definition}\label{def:KL_original}
    Let $F:\R^n \longrightarrow \bR$  be a proper, lower semicontinuous function. The function $F$ is said to have the Kurdyka-\L{}ojasiewicz (KL) property at $\bar{x} \in \dom(\partial F)$ if there exist $\upsilon \in (0,+\infty]$, a neighborhood $\mathcal{U}$ of  $\bar{x}$ and a continuous concave function $\phi:[0,\upsilon)\longrightarrow [0,+\infty)$ with $\phi(0) = 0$, $\phi\in C^1(0,\upsilon)$, $\phi'(s) > 0$ for all $s \in (0,\upsilon)$, such that 
    \begin{equation*}
        \phi'(F(z)-F(\bar{x})) \|\partial F(z)\|_{-} \geq 1,
    \end{equation*}
    for all $z\in \mathcal{U}\cap \{z\in\R^n: \ F(\bar{x})<F(z)<F(\bar{x})+ \upsilon \}$. If $F$ satisfies the KL property at each point of $\operatorname{dom}(\partial F)$, then $F$ is called a KL function.
\end{definition}

The convergence results proved in \cite{attouch2010proximal} are reported in Theorem~\ref{thm:conv_APM_regu} for the reader's convenience. Unlike APM, the RAPM algorithm shows both local and global convergence, and its convergence rate depends on the geometrical shape of the desingularizing function $\phi$ in Definition~\ref{def:KL_original}.

\begin{theorem}\cite[Corollary 3.1, Theorem 3.4]{attouch2010proximal}\label{thm:conv_APM_regu}
    Let $\{x^k\}_{k\in\mathbb{N}}$ and $\{y^k\}_{k\in\mathbb{N}}$ be the sequences generated by Algorithm RAPM.
    \begin{itemize}
        \item[(i)] (Global convergence) If the function $L$ defined in \eqref{eq:problem_L} is a KL function, then either $\|(x^k,y^k)\|\rightarrow \infty$, or the sequence $\{(x^k,y^k)\}_{k\in\mathbb{N}}$ converges to a stationary point of $L$.
        \item[(ii)] (Local convergence) Suppose that the function $L$ defined in \eqref{eq:problem_L} has the KL property at a point $(\bar{x},\bar{y})$ and that $\|\bar{x}-\bar{y}\|=\min\{\|x-y\|: \ x\in A, \ y\in B\}$. If $(x^0,y^0)$ is sufficiently close to $(\bar{x},\bar{y})$, then the whole sequence $\{(x^k,y^k)\}_{k\in\mathbb{N}}$ converges to a point $(x_{\infty},y_{\infty})$ such that $\|x_{\infty}-y_{\infty}\|=\min\{\|x-y\|: \ x\in A, \ y\in B\}$.
        \item[(iii)] (Rate of convergence) Suppose that $\{(x^k,y^k)\}_{k\in\N}$ converges to a point $(x^*,y^*)\in A\times B$ and that the function $L$ defined in \eqref{eq:problem_L} has the KL property at $(x^*,y^*)$ with $\phi(t)=ct^{1-\theta}$, being $c>0$ and $\theta\in[0,1)$. Then, if $\theta=0$, the sequence $\{(x^k,y^k)\}_{k\in\N}$ terminates in a finite number of iterations; otherwise, we have
        \begin{equation*}
            \|(x^k,y^k)-(x^*,y^*)\| = \begin{cases}
               \mathcal{O}(\tau^k), \ \text{for some }\tau\in[0,1), \quad &\text{if }\theta\in(0,\frac{1}{2}],\\
                \mathcal{O}\left(k^{-(1-\theta)/(2\theta-1)}\right), \quad &\text{if }\theta\in(\frac{1}{2},1).
            \end{cases}
        \end{equation*}     
    \end{itemize}
\end{theorem}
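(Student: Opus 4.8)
The plan is to recognize RAPM as the proximal alternating minimization scheme of \cite{attouch2010proximal} applied to the block-separable function $L(x,y)=\frac12\norm{x-y}^2+\iota_A(x)+\iota_B(y)$, whose coupling term $\frac12\norm{x-y}^2$ is smooth with Lipschitz gradient while $\iota_A,\iota_B$ are proper and lower semicontinuous. Writing $z^k=(x^k,y^k)$, all three assertions reduce to verifying the three standard ingredients of the Kurdyka--\L{}ojasiewicz descent machinery and feeding them into the abstract convergence theorem. First I would establish the \emph{sufficient decrease} property: comparing $x^{k+1}$ with the competitor $x^k$ in the subproblem defining Step~1, and $y^{k+1}$ with $y^k$ in Step~2, yields
\[
L(x^{k+1},y^k)+\tfrac{1}{2\lambda_k}\norm{x^{k+1}-x^k}^2\le L(x^k,y^k),\qquad L(x^{k+1},y^{k+1})+\tfrac{1}{2\mu_k}\norm{y^{k+1}-y^k}^2\le L(x^{k+1},y^k).
\]
Adding these and using $\lambda_k\le\lambda_+$, $\mu_k\le\mu_+$ gives a constant $a>0$ with $L(z^{k+1})+a\norm{z^{k+1}-z^k}^2\le L(z^k)$; in particular $\{L(z^k)\}$ is nonincreasing and $\sum_k\norm{z^{k+1}-z^k}^2<\infty$.

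Next comes the \emph{relative error} bound. Writing the first-order optimality conditions of the two subproblems in terms of the limiting subdifferential (equivalently, the normal cones $N_A,N_B$) and recombining them, I would exhibit the explicit element $\big((y^k-y^{k+1})-\tfrac{1}{\lambda_k}(x^{k+1}-x^k),\,-\tfrac{1}{\mu_k}(y^{k+1}-y^k)\big)\in\partial L(z^{k+1})$, whose norm is controlled by $b\,\norm{z^{k+1}-z^k}$ for some $b>0$; the cross term $y^k-y^{k+1}$ appears precisely because $\partial_x L$ is evaluated at $y^{k+1}$ while the optimality was taken at $y^k$. Together with the lower semicontinuity of $L$ (the \emph{continuity} ingredient, which combined with monotonicity of $L(z^k)$ forces $L$ to converge along any convergent subsequence), these are exactly hypotheses H1--H3 of the abstract framework.

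With the three ingredients in hand, part~(i) follows the classical finite-length argument: if $\norm{z^k}\not\to\infty$ there is a cluster point $z^\star=(x^\star,y^\star)$; applying the KL inequality of Definition~\ref{def:KL_original} at $z^\star$ through its concave desingularizer $\phi$, and combining H1 and H2 via the canonical concavity/Young estimate $\norm{z^{k+1}-z^k}\le\tfrac12\norm{z^k-z^{k-1}}+C\big(\phi(\Delta_k)-\phi(\Delta_{k+1})\big)$ with $\Delta_k=L(z^k)-L(z^\star)$, one telescopes to obtain $\sum_k\norm{z^{k+1}-z^k}<\infty$, so $\{z^k\}$ is Cauchy and converges, while H2 forces $0\in\partial L$ at the limit. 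For part~(ii) I would exploit that the minimal-distance hypothesis makes $(\bar x,\bar y)$ a global minimizer of $L$ over $A\times B$ and run the same estimate as a \emph{trapping} argument: choosing $(x^0,y^0)$ close enough that both $\phi\big(L(x^0,y^0)-L(\bar x,\bar y)\big)$ and $\norm{z^0-(\bar x,\bar y)}$ are small keeps every iterate inside the KL neighbourhood, so the finite-length conclusion yields a limit $(x_\infty,y_\infty)\in A\times B$; lower semicontinuity plus global minimality then pin its value to $L(\bar x,\bar y)$, i.e.\ $\norm{x_\infty-y_\infty}$ equals the minimal distance. Part~(iii) is obtained by specialising $\phi(t)=ct^{1-\theta}$ and converting the KL/telescoping estimate into a difference inequality for the tail lengths $r_k=\sum_{j\ge k}\norm{z^{j+1}-z^j}$, whose solution gives finite termination for $\theta=0$, a geometric rate for $\theta\in(0,\tfrac12]$, and the stated sublinear rate $\mathcal{O}\big(k^{-(1-\theta)/(2\theta-1)}\big)$ for $\theta\in(\tfrac12,1)$.

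The main obstacle is the relative-error step together with the finite-length/trapping bookkeeping. One must produce a genuine element of the \emph{limiting} subdifferential $\partial L(z^{k+1})$—not merely of the two partial subdifferentials—bounded by a single step, which is where the coupling cross term must be handled carefully; and then, for the local statement~(ii), one must control the accumulated path length tightly enough to guarantee that the iterates provably never leave the neighbourhood on which the KL inequality is available, since outside it the telescoping estimate is unavailable.
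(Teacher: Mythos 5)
Your proposal is correct and follows essentially the same route as the source: the paper states this theorem without proof, importing it from \cite{attouch2010proximal}, and your reconstruction — sufficient decrease, the explicit subgradient $\bigl((y^k-y^{k+1})-\tfrac{1}{\lambda_k}(x^{k+1}-x^k),\,-\tfrac{1}{\mu_k}(y^{k+1}-y^k)\bigr)\in\partial L(x^{k+1},y^{k+1})$ giving the relative-error bound, continuity of $L$ on $A\times B$, and then the KL concavity/telescoping, trapping, and tail-length difference-inequality arguments — is exactly the machinery of that reference, which the paper itself redeploys in inexact form in Lemma~\ref{lem:properties} and Theorems~\ref{thm:iRAPM_convergence}--\ref{thm:iRAPM_rate}. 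One small imprecision in part~(ii): lower semicontinuity plus global minimality only give $L(\bar{x},\bar{y})\leq L(x_\infty,y_\infty)\leq \lim_k L(x^k,y^k)$, so to pin the limit value you must also note that the KL inequality combined with $\|\partial L(x^k,y^k)\|_-\leq b\|(x^k,y^k)-(x^{k-1},y^{k-1})\|\rightarrow 0$ forces $L(x^k,y^k)\downarrow L(\bar{x},\bar{y})$ — a step your stated ingredients already supply.
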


%%%%%%%%%%%%%%%%%%
% Inexact Method %
%%%%%%%%%%%%%%%%%%
\section{An inexact regularized alternating projection method}

In this section, we propose an inexact version of Algorithm RAPM, which is designed for solving the feasibility problem \eqref{eq:problem} whenever the projection onto $B$ is costly or does not admit a closed-form implementation. After presenting the method in full detail, we discuss its practical implementation and analyze its convergence in a nonconvex scenario. Regarding the latter aspect, we prove the convergence of the iterates to a stationary limit point of the function $L$ and investigate on  the  convergence rates of the distance to the limit, by assuming that a specific merit function related to $L$ satisfies the KL property on its domain.

\subsection{The proposed method}
We call our proposed method iRAPM - inexact Regularized Alternating Projection Method. iRAPM alternates between an exact (regularized) projection onto $A$ and an inexact (regularized) projection onto $B$, generating two sequences $\{x^k\}_{k\in\N}\subseteq A$ and $\{y^k\}_{k\in\N}\subseteq B$. At each iteration $k\in\N$, given $x^k,y^k$ and two positive parameters $\lambda_k,\mu_k$, the next iterate $x^{k+1}\in A$ is computed as an (exact) projected point of the convex combination $(x^k+\lambda_k y^k)/(\lambda_k+1)$ onto $A$, i.e.,
\begin{equation*}
    x^{k+1} \in \proj_A\left(\frac{1}{\lambda_k+1}x^k+\frac{\lambda_k}{\lambda_k+1}y^k\right),
\end{equation*}
whereas the next iterate $y^{k+1}\in B$ represents an inexact projection of the point $(y^k+\mu_k x^{k+1})/(\mu_k+1)$ onto $B$. In order to introduce the inexactness criteria, we let $Q^{k+1}$ be the function defined as
\begin{equation}
Q^{k+1}(y):= \frac{1}{2\mu_k}\|y-y^k\|^2+L(x^{k+1},y)-L(x^{k+1},y^k).\label{eq:Q}
\end{equation}
{Let us denote with $\yck$ the point $(y^k+\mu_k x^{k+1})/(\mu_k+1)$. Then, we can conveniently write $Q^{k+1}$ in \eqref{eq:Q} as}
\begin{align}\label{eq:Qbis}
    Q^{k+1}(y)&= 
    \frac{1}{2\mu_k}\|y-y^k\|^2+\frac{1}{2}\|x^{k+1}-y\|^2-\frac{1}{2}\|x^{k+1}-y^k\|^2+\iota_B(y) \nonumber  \\
    &=\frac{(1+\mu_k)}{2\mu_k}\|y-y^k\|^2-\frac{1}{2}\|y-y^k\|^2+\frac{1}{2}\|x^{k+1}-y\|^2-\frac{1}{2}\|x^{k+1}-y^k\|^2+\iota_B(y)\nonumber\\
    &=\frac{(1+\mu_k)}{2\mu_k}\|y-y^k\|^2+\langle x^{k+1}-y^k,y^k-y\rangle+\iota_{B}(y)\nonumber\\
    &=\frac{(1+\mu_k)}{2\mu_k}\|y-y^k\|^2+\frac{1+\mu_k}{\mu_k}\left\langle \frac{\mu_k}{1+\mu_k}(y^k-x^{k+1}),y-y^k\right\rangle+\iota_{B}(y)\nonumber\\
    &=\frac{(1+\mu_k)}{2\mu_k}\left\|y-y^k+\frac{\mu_k}{1+\mu_k}\left(y^k-x^{k+1}\right)\right\|^2-\frac{(1+\mu_k)}{2\mu_k}\left\|\frac{\mu_k}{1+\mu_k} (y^k - x^{k+1})\right\|^2+\iota_B(y)\nonumber\\
    &=\frac{(1+\mu_k)}{2\mu_k}\left\|y-\yck\right\|^2-\frac{(1+\mu_k)}{2\mu_k}\left\|y^k-\yck\right\|^2+\iota_B(y), \quad \forall \ y\in \R^n,
\end{align}
{where the first equality follows from the form of $L$ given in \eqref{eq:problem_L}, the third and fifth are both due to the application of the basic relation $\|a+b\|^2=\|a\|^2+\|b\|^2+2\langle a,b\rangle$, and the sixth follows by using the definition of $y_{reg}^k$.}
Hence the set of minimum points of $Q^{k+1}$ is the projection set of $\yck$ onto $B$.
Letting $\hat{y}^{k+1}\in\mathcal{P}_B(\yck)$, we observe that the value $Q^{k+1}(\hat{y}^{k+1})$ assumes negative or null sign, since
\begin{equation}\label{eq:sign-Q}
    Q^{k+1}(\hat{y}^{k+1})=\min_{y\in\R^n}Q^{k+1}(y)\leq Q^{k+1}(y^k)=0.
\end{equation}

Then, given a prefixed tolerance parameter $\zeta\in(0,1]$, iRAPM computes an approximate projection $y^{k+1}\in B$ such that the following conditions are satisfied:
\begin{align}
    Q^{k+1}(y^{k+1})&\leq \zeta 
    Q^{k+1}(\hat{y}^{k+1})\label{eq:inexact}\\
    \|y^{k+1}-\hat{y}^{k+1}\|&\leq \sqrt{-\left(\frac{1-\zeta}{\zeta}\right)Q^{k+1}(y^{k+1})},\label{eq:bound}
\end{align}

We report iRAPM in its entirety in Algorithm~\ref{algo:APM_regu_inexact}. 
\begin{algorithm}[h!]\caption{inexact Regularized Alternating Projection Method (iRAPM)}\label{algo:APM_regu_inexact}
    Choose $(x^0,y^0) \in A\times B$, $0<\lambda_{-}<\lambda_+$, $0<\mu_{-}<\mu_+$, $\{\lambda_k\}_{k\in\mathbb{N}}\subseteq [\lambda_{-},\lambda_+]$, $\{\mu_k\}_{k\in\mathbb{N}}\subseteq [\mu_{-},\mu_{+}]$, $\zeta\in(0,1]$.\\
    
    {\textsc{For} $k=0,1,\ldots$}
    \begin{itemize}[leftmargin=2cm]
        \item[{\sc Step 1}] Compute $x^{k+1} \in \proj_A\left(\frac{1}{\lambda_k+1}x^k+\frac{\lambda_k}{\lambda_k+1}y^k\right)$.
        \item[{\sc Step 2}] Compute $y^{k+1}\in B$ such that \eqref{eq:inexact}-\eqref{eq:bound} hold.
    \end{itemize}
\end{algorithm}

\REV{Some comments on the inexactness conditions \eqref{eq:inexact}-\eqref{eq:bound} employed in Algorithm~\ref{algo:APM_regu_inexact} are in order}. 
\begin{itemize}
    \item Condition \eqref{eq:inexact} is well defined due to \eqref{eq:sign-Q}. Furthermore,  $Q^{k+1}(y^{k+1})\leq 0$ and if $\zeta=1$, the point $y^{k+1}$ belongs to the (exact) projection set of $\yck$ onto $B$; otherwise, if $\zeta\in (0,1)$, $y^{k+1}$ represents an appropriate approximation that is ``sufficiently close'' to a point of the projection set. The parameter $\zeta$ controls the approximation level of the iterates $y^{k+1}$; the closer $\zeta$ is to $1$, the closer $y^{k+1}$ is to the projection set, whereas the closer $\zeta$ is to $0$, the coarser $y^{k+1}$ is as an inexact projection. Condition \eqref{eq:inexact} has been frequently used for computing inexact projections \cite{Birgin2003,Ferreira-2022} and inexact proximal points \cite{Bonettini-Prato-Rebegoldi-2021} in the convex setting;
    \item \REV{A bound similar to the one in \eqref{eq:bound} is automatically satisfied if condition \eqref{eq:inexact} holds and $B$ is convex; indeed, in this case, we can combine the $\frac{1+\mu_k}{\mu_k}-$strong convexity of $Q^{k+1}$ and \eqref{eq:inexact} to obtain
    \begin{equation}\label{eq:bound_bis}
    \|y^{k+1}-\hat{y}^{k+1}\|\leq \sqrt{\frac{2\mu_k}{1+\mu_k} (Q^{k+1}(y^{k+1})-Q^{k+1}(\hat{y}^{k+1}))}\leq \sqrt{-\frac{2\mu_k(1-\zeta)}{(1+\mu_k)\zeta} Q^{k+1}(y^{k+1})}.
    \end{equation}
    However, if $B$ is not convex, then $Q^{k+1}$ is not strongly convex and condition \eqref{eq:bound} needs to be numerically imposed in order to guarantee the convergence of Algorithm~\ref{algo:APM_regu_inexact}.}
    {%
    \item In Figure~\ref{fig:counterexample}, we present an example showing that the two inexactness conditions \eqref{eq:inexact}-\eqref{eq:bound} may be independent one of each other whenever $B$ is nonconvex.
    Let $A = [0,2]\subseteq \R$ and $B = [0,1]\cup [2,3]\subseteq \R$. Note that $B$ is the union of two disjoint intervals, hence it is nonconvex. Given $(x^0,y^0)=(0,3)\in A\times B$, $\lambda_0=1$, $\mu_0 = 10$ and $\zeta=\frac{1}{2}$, it is easy to show that $x^0_{reg} = \frac{3}{2}$, $y^0_{reg} = \frac{18}{11} \approx 1.6364$, and $\hat{y}^1 = 2$. 
    In the figure, the red solid parabola represents the function $Q^1(y)$ for $y \in B$, whereas the red dotted parabola is its extension for $y \notin B$ obtained by neglecting $\iota_B(y)$,
    the cyan solid line is the right-hand term of condition \eqref{eq:inexact}, while the green dashed and magenta dash-dotted lines represent the left and right-hand sides of condition \eqref{eq:bound}, respectively.
    In this case, {the minimum of $Q^1$ is attained at a point $\hat{y}^1$ that is different from $y^0_{reg}$.} %the minimum of the red curve is attained at $y^0_{reg}$ which is different from $\hat{y}^1$, that is, the minimum of $Q^1(y)$.
    The subintervals of $B$ satisfying conditions \eqref{eq:inexact}-\eqref{eq:bound} are represented as cyan and green segments on the horizontal axis, respectively. We see that {there are no points of $B$ to the left of $y_{reg}^0$ that satisfy condition \eqref{eq:bound}.} 
    This means that there exists a point $y \in B$ such that it satisfies condition \eqref{eq:inexact} but not condition \eqref{eq:bound}, showing that the second condition does not automatically follow from the first one in the nonconvex case. Conversely, the cyan interval on the right interval is strictly included in the green one, which implies that the first condition does not immediately follow from the second one. {Furthermore, in these settings, it is possible to show that condition \eqref{eq:inexact} is also independent of the bound \eqref{eq:bound_bis}, which would hold if the set $B$ were convex, as discussed in the previous bullet.}
    }
    \begin{figure}
        \centering
        \includegraphics[width=0.6\linewidth]{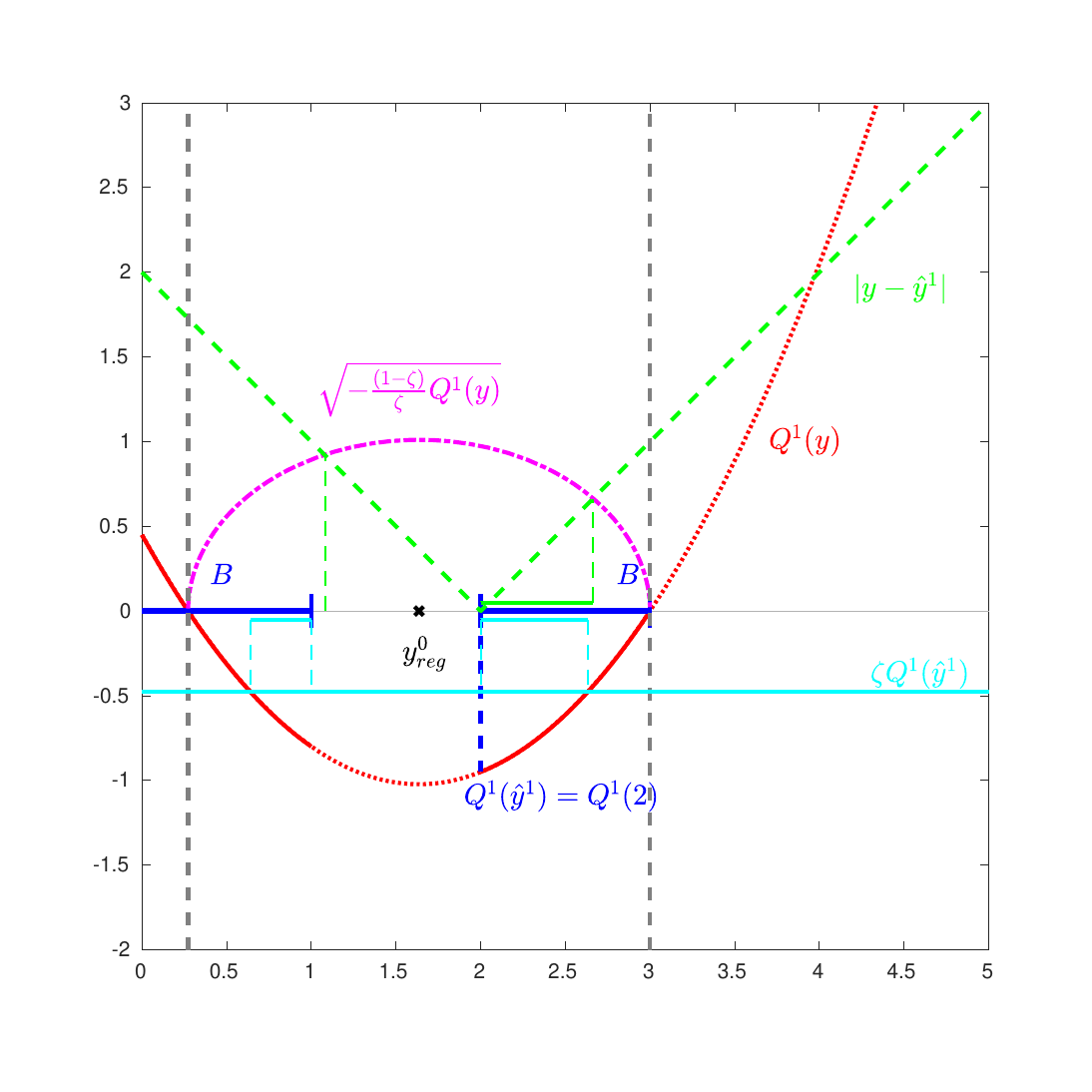}
        \caption{One-dimensional example showing that the two inexactness conditions \eqref{eq:inexact}-\eqref{eq:bound} are independent one of each other in the nonconvex case}
        \label{fig:counterexample}
    \end{figure}
\end{itemize}

\subsection{Practical computation of the inexact projections}\label{sec:practical}
We now discuss how we can practically implement the regularized inexact projection appearing at Step 2 of Algorithm~\ref{algo:APM_regu_inexact}. At first glance, it looks like enforcing the inexactness conditions \eqref{eq:inexact}-\eqref{eq:bound} requires the knowledge of the projection set of $\yck$ onto $B$, which we assumed we do not have access to. Nonetheless, we show  how to get rid of such an information and still compute a point $y^{k+1}\in B$ satisfying \eqref{eq:inexact}-\eqref{eq:bound}.

\subsubsection{Enforcing condition \eqref{eq:inexact}}
The following discussion is borrowed from \cite[Section 2.1.1]{Ferreira-2022}, which in turn revises a procedure that has been formerly proposed in \cite{Birgin2003}.
{By employing the form of $Q^{k+1}$ given in \eqref{eq:Qbis}}, we can easily rewrite the inexactness condition \eqref{eq:inexact} as
\begin{equation}\label{eq:inexact_ferreira}
    \|y^{k+1}-\yck\|^2\leq \zeta \|\hat{y}^{k+1}-\yck\|^2+(1-\zeta)\|y^k-\yck\|^2.
\end{equation}
Condition \eqref{eq:inexact_ferreira} is identical to the concept of {\it feasible inexact projection of $\yck$ onto $B$ relative to the point $y^k\in B$ and forcing parameter $\zeta\in(0,1]$ with respect to the norm $\|\cdot\|$}, which is proposed in \cite[Definition 2]{Ferreira-2022} for convex sets. According to \cite[Section 2.1.1]{Ferreira-2022},  it is possible to define a practical procedure for computing a point $y^{k+1}$ satisfying \eqref{eq:inexact_ferreira} if there exist two computable, iteration-dependent sequences $\{c^\ell\}_{\ell \in \mathbb{N}} \subseteq \R$, $\{w^\ell\}_{\ell \in \mathbb{N}} \subseteq B$ complying with the following conditions
\begin{subequations}
    \begin{align}
        c^\ell &\le \|\hat{y}^{k+1}-\yck\|^2, \quad \forall \ \ell \in \mathbb{N} \label{eq:cond_1} \\
        \lim_{\ell \rightarrow +\infty} c^\ell & = \|\hat{y}^{k+1}-\yck\|^2 \label{eq:cond_2} \\
        \lim_{\ell \rightarrow +\infty} w^\ell & = \hat{y}^{k+1}. \label{eq:cond_3}
    \end{align}
\end{subequations}
More precisely, the following result holds. The  proof, in the context of inexact projection onto convex sets is given in \cite[Section 2.1.1]{Ferreira-2022}; we report the proof in the appendix for sake to completeness and to enlighten that the result is valid also if $B$ is not convex. 
\begin{lemma}\label{lem:stopping}
    Let $\{c^\ell\}_{\ell \in \mathbb{N}} \subseteq \R$, $\{w^\ell\}_{\ell \in \mathbb{N}} \subseteq B$ be two sequences complying with \eqref{eq:cond_1}-\eqref{eq:cond_2}-\eqref{eq:cond_3}. Then, there exists $\hat{\ell}\in \N$ such that
    \begin{equation}\label{eq:stopping}
        \|w^{{\ell}}-\yck\|^2\leq \zeta c^{{\ell}}+(1-\zeta)\|y^k-\yck\|^2,\quad \forall \ \ell\ge \hat{\ell},
    \end{equation}
    which implies that condition \REV{\eqref{eq:inexact}} holds with $y^{k+1}=w^{{\ell}}$ for $\ell\ge \hat{\ell}$.  
\end{lemma}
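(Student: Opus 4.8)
The plan is to establish the two parts of the statement separately: the easy implication that the computable stopping rule \eqref{eq:stopping} forces the inexactness condition \eqref{eq:inexact}, and the genuine content, namely that \eqref{eq:stopping} is triggered after finitely many inner iterations. For the implication I would recall that \eqref{eq:inexact} was already rewritten, via the explicit form \eqref{eq:Qbis} of $Q^{k+1}$, as the equivalent inequality \eqref{eq:inexact_ferreira}. Assuming \eqref{eq:stopping} holds at some $\ell$ and setting $y^{k+1}=w^\ell$, I would bound the right-hand side of \eqref{eq:stopping} from above using $c^\ell\le\|\hat y^{k+1}-\yck\|^2$ (condition \eqref{eq:cond_1}), which turns \eqref{eq:stopping} into $\|w^\ell-\yck\|^2\le\zeta\|\hat y^{k+1}-\yck\|^2+(1-\zeta)\|y^k-\yck\|^2$; this is precisely \eqref{eq:inexact_ferreira}, hence \eqref{eq:inexact}. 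This part is purely algebraic and uses only \eqref{eq:cond_1}.

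For the existence of $\hat\ell$ I would introduce the slack $s^\ell:=\zeta c^\ell+(1-\zeta)\|y^k-\yck\|^2-\|w^\ell-\yck\|^2$, so that \eqref{eq:stopping} is exactly $s^\ell\ge 0$, and then pass to the limit. By continuity of the norm, condition \eqref{eq:cond_3} gives $\|w^\ell-\yck\|^2\to\|\hat y^{k+1}-\yck\|^2$, and condition \eqref{eq:cond_2} gives $c^\ell\to\|\hat y^{k+1}-\yck\|^2$, so that $\lim_{\ell\to\infty}s^\ell=(1-\zeta)\big(\|y^k-\yck\|^2-\|\hat y^{k+1}-\yck\|^2\big)$. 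The decisive ingredient is the minimality of the projection: since $\hat y^{k+1}\in\proj_B(\yck)$ realises the distance from $\yck$ to $B$ and $y^k\in B$, one has $\|\hat y^{k+1}-\yck\|\le\|y^k-\yck\|$, whence $\lim_\ell s^\ell\ge 0$. I would stress that this uses no convexity of $B$ whatsoever, only $y^k\in B$ and the defining property of $\hat y^{k+1}$, which is exactly what carries the convex argument of \cite{Ferreira-2022} over to the present nonconvex setting. In the generic situation where $\zeta<1$ and $y^k$ is not already a closest point of $B$ to $\yck$, this limit is strictly positive, so by definition of the limit there is $\hat\ell$ with $s^\ell>0$, i.e., \eqref{eq:stopping} holding strictly, for all $\ell\ge\hat\ell$, which completes the proof in the generic case.

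The main obstacle is precisely the borderline case in which $\lim_\ell s^\ell=0$, that is, either $\zeta=1$ (an exact projection is demanded) or $\|\hat y^{k+1}-\yck\|=\|y^k-\yck\|$ (so $y^k$ is itself already a projection). Here the limit argument no longer yields a strict sign, and in fact, since $\|w^\ell-\yck\|^2\ge\|\hat y^{k+1}-\yck\|^2$ for every $\ell$ while the right-hand side of \eqref{eq:stopping} is at most $\|\hat y^{k+1}-\yck\|^2$, the rule \eqref{eq:stopping} can only be met with equality; for arbitrary sequences $w^\ell\to\hat y^{k+1}$ this need not occur at any finite $\ell$. I would resolve this either by observing that in the degenerate case $y^k$ trivially satisfies \eqref{eq:inexact} and may simply be taken as $y^{k+1}$, or, in line with the intended application, by invoking the finite termination in exact arithmetic of the Krylov process generating $\{w^\ell\}$ and $\{c^\ell\}$, which makes $w^\ell=\hat y^{k+1}$ and $c^\ell=\|\hat y^{k+1}-\yck\|^2$ for all large $\ell$ and hence validates \eqref{eq:stopping} with equality at a finite index.
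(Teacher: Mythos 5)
Your proof is correct and follows essentially the same route as the paper's: both arguments rest on passing to the limit in the computable quantities via \eqref{eq:cond_2}--\eqref{eq:cond_3}, on the projection inequality $\|\hat{y}^{k+1}-\yck\|\leq \|y^k-\yck\|$ (valid simply because $y^k\in B$ and $\hat{y}^{k+1}\in\mathcal{P}_B(\yck)$, with no convexity of $B$), and both deduce \eqref{eq:inexact} from \eqref{eq:stopping} by a single application of \eqref{eq:cond_1} through the reformulation \eqref{eq:inexact_ferreira}. The differences are organizational rather than substantive. The paper disposes of the degenerate case $y^k\in\mathcal{P}_B(\yck)$ at the outset, by noting that one may then take $w^{\ell}=y^k$ and $c^{\ell}=\|y^k-\yck\|^2$ for all $\ell$ --- which is the same resolution you offer at the end --- and, under the resulting strict gap $\|\hat{y}^{k+1}-\yck\|^2<\|y^k-\yck\|^2$, runs a two-stage argument: first the threshold inequality \eqref{eq:proof_step_1} obtained from \eqref{eq:cond_2} with a specific $\varepsilon$, then an expansion of $\|w^{\ell}-\yck\|^2$ about $\hat{y}^{k+1}$. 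Your one-shot limit of the slack $s^{\ell}$ is slightly more direct and, incidentally, avoids the paper's unnecessary intermediate comparison between $c^{\hat{\ell}}$ and $c^{\ell}$, which does not actually follow from \eqref{eq:cond_1}--\eqref{eq:cond_2} alone. Your flag of the borderline $\zeta=1$ is also well taken: the paper's choice $\varepsilon=\frac{1-\zeta}{\zeta}\left(\|y^k-\yck\|^2-\|\hat{y}^{k+1}-\yck\|^2\right)$ vanishes when $\zeta=1$, so its argument too implicitly requires $\zeta<1$; for $\zeta=1$ the criterion \eqref{eq:stopping} can indeed only be met with equality, and in the intended application this is rescued exactly as you suggest, by the finite termination of the Lanczos process encoded in the definitions \eqref{eq:Wl} and \eqref{eq:cl}, which make the sequences exact after finitely many steps.
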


\begin{proof}
    See Appendix~\ref{appendix:A1}.
\end{proof}

The above procedure has been previously applied only to convex sets with a specific structure \cite{Birgin2003,Ferreira-2022}. In Section~\ref{sec:application}, we show how we can find the sequences $\{c^{\ell}\}_{\ell\in\N}$, $\{w^{\ell}\}_{\ell\in\N}$ for computing inexact projections over the rank level set $\mathcal{C}_r = \{Z\in\R^{n_1\times n_2}: \ \operatorname{rank}(Z)\leq r\}$, which is nonconvex.

\subsubsection{Enforcing condition \eqref{eq:bound}}
We can implement condition \eqref{eq:bound} whenever we have at our disposal a vanishing sequence $\{a^{\ell}\}_{\ell\in\N}\subseteq \R$ representing an upper bound for the quantity $\|w^{\ell}-\hat{y}^{k+1}\|$, i.e.
\begin{equation}\label{eq:cond_4}
    \|w^{\ell}-\hat{y}^{k+1}\|\leq a^{\ell}, \quad \lim_{\ell\rightarrow \infty}a^{\ell} = 0.  
\end{equation}
In this case, the following result holds.

\begin{lemma}\label{lem:stopping_2}
    Let $\{w^{\ell}\}_{\ell\in\N}\subseteq B$ and $\{a^{\ell}\}_{\ell\in\N}\subseteq \R$ be sequences complying with \eqref{eq:cond_3} and \eqref{eq:cond_4}, respectively.
    Then there exists $\tilde{\ell}\in\N$ such that
    \begin{equation}\label{eq:stopping_2}
        Q^{k+1}(w^{{\ell}})< 0, \quad a^{{\ell}}\leq \sqrt{-\left(\frac{1-\zeta}{\zeta}\right)Q^{k+1}(w^{{\ell}})} \quad \forall \ell\ge \tilde \ell,
    \end{equation}
    which implies that condition \eqref{eq:bound} holds with $y^{k+1} = w^{\ell}$ for $\ell\ge \tilde \ell$.
\end{lemma}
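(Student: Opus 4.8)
The plan is to pass to the limit along the inner index $\ell$ in the two requirements of \eqref{eq:stopping_2}, exploiting that $\{w^\ell\}$ converges to the exact projection $\hat{y}^{k+1}$ by \eqref{eq:cond_3} while $\{a^\ell\}$ vanishes by \eqref{eq:cond_4}. The first observation I would record is that, although $Q^{k+1}$ is discontinuous because of the indicator term $\iota_B$, its restriction to $B$ coincides with the continuous quadratic $\frac{1+\mu_k}{2\mu_k}(\|y-\yck\|^2-\|y^k-\yck\|^2)$ read off from \eqref{eq:Qbis}. Since both $w^\ell$ and $\hat{y}^{k+1}$ lie in $B$, this continuity together with \eqref{eq:cond_3} yields $Q^{k+1}(w^\ell)\to Q^{k+1}(\hat{y}^{k+1})$ as $\ell\to\infty$.

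Next I would isolate the limiting value $Q^{k+1}(\hat{y}^{k+1})$. Being the minimum of $Q^{k+1}$, it satisfies $Q^{k+1}(\hat{y}^{k+1})\le Q^{k+1}(y^k)=0$ by \eqref{eq:sign-Q}; in the nondegenerate case of interest, where $y^k$ does not already realize the projection of $\yck$ onto $B$, this inequality is strict, so we may set $L:=Q^{k+1}(\hat{y}^{k+1})<0$. From $Q^{k+1}(w^\ell)\to L<0$ I would then deduce that, for all $\ell$ past some threshold $\ell_1$, the bound $Q^{k+1}(w^\ell)\le L/2<0$ holds; in particular the first requirement of \eqref{eq:stopping_2} is met and the right-hand side $\sqrt{-\left(\frac{1-\zeta}{\zeta}\right)Q^{k+1}(w^\ell)}$ is well defined and bounded below by the positive constant $\rho:=\sqrt{-\frac{1-\zeta}{2\zeta}L}$ (here $\zeta\in(0,1)$, so the coefficient is positive).

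Since $a^\ell\to 0$ by \eqref{eq:cond_4}, there is a second threshold $\ell_2$ beyond which $a^\ell\le\rho$, whence $a^\ell\le\sqrt{-\left(\frac{1-\zeta}{\zeta}\right)Q^{k+1}(w^\ell)}$, which is the second requirement of \eqref{eq:stopping_2}. Taking $\tilde{\ell}=\max\{\ell_1,\ell_2\}$ makes both conditions hold simultaneously for every $\ell\ge\tilde{\ell}$. To close, I would chain the inner estimate \eqref{eq:cond_4} with this last inequality: for $\ell\ge\tilde{\ell}$ and $y^{k+1}=w^\ell$, one has $\|y^{k+1}-\hat{y}^{k+1}\|=\|w^\ell-\hat{y}^{k+1}\|\le a^\ell\le\sqrt{-\left(\frac{1-\zeta}{\zeta}\right)Q^{k+1}(w^\ell)}=\sqrt{-\left(\frac{1-\zeta}{\zeta}\right)Q^{k+1}(y^{k+1})}$, which is exactly condition \eqref{eq:bound}.

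The main delicate point, rather than any computation, is the treatment of the boundary cases. The strict negativity $Q^{k+1}(\hat{y}^{k+1})<0$ must be secured for the square-root bound to be meaningful, and it degenerates precisely when $y^k$ already projects exactly onto $B$ (so that $Q^{k+1}(\hat{y}^{k+1})=0$); in that situation one simply takes $y^{k+1}=y^k=\hat{y}^{k+1}$ and \eqref{eq:bound} holds trivially. Likewise the endpoint $\zeta=1$ forces $\|y^{k+1}-\hat{y}^{k+1}\|=0$, so the finite-stopping argument genuinely pertains to the inexact regime $\zeta\in(0,1)$; for $\zeta=1$ one must instead reach $\hat{y}^{k+1}$ exactly.
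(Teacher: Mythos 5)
Your argument is correct and is essentially the paper's own proof: both treat the degenerate case $y^k\in\mathcal{P}_B(\yck)$ separately, use the continuity of $Q^{k+1}$ on $B$ together with \eqref{eq:cond_3} to conclude $Q^{k+1}(w^\ell)\to Q^{k+1}(\hat{y}^{k+1})<0$, and then let the vanishing $a^\ell$ from \eqref{eq:cond_4} drop below a fixed positive lower bound on the right-hand side (your threshold $L/2$ is just the paper's $\epsilon$-argument with $\epsilon=-L/2$). Your closing caveat about the endpoint $\zeta=1$, where the bound degenerates and exactness is forced unless $a^\ell$ eventually vanishes identically (as it does in the Lanczos application), is a fair observation the paper leaves implicit, but it does not alter the approach.
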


\begin{proof}
If $y^k \in  \mathcal{P}_{B}(\yck)$,  one can always set $w^{\ell} = y^k$ and   $a^{\ell}=0$ for any $\ell$, so that conditions \eqref{eq:cond_3}, \eqref{eq:cond_4} and \eqref{eq:stopping_2} are trivially satisfied for all $\ell\in\N$ with $\hat{y}^{k+1}=y^k $. Then, without loss of generality, we assume that $y^k\notin \mathcal{P}_{B}(\yck)$.
Since $y^k \notin \mathcal{P}_B(\yck)$, we have $Q^{k+1}(\hat{y}^{k+1})<0$ by definition of $Q^{k+1}$. Then, since condition \eqref{eq:cond_3} holds and $Q^{k+1}$ is a continuous function on $B$, we have 
\begin{equation*}
    \lim_{\ell\rightarrow \infty}Q^{k+1}(w^{\ell})=Q^{k+1}(\hat{y}^{k+1})<0.
\end{equation*}
Then, for any $0<\epsilon<-Q^{k+1}(\hat{y}^{k+1})$, there exists $\ell_\epsilon>0$ such that
\begin{equation*}
    0<-Q^{k+1}(\hat{y}^{k+1})-\epsilon\leq -Q^{k+1}(w^\ell), \quad \forall \ \ell\geq \ell_{\epsilon}. 
\end{equation*}
Likewise, due to the right part of \eqref{eq:cond_4}, there exists $\tilde{\ell}\ge \ell_\epsilon$ sufficiently large such that
\begin{equation*}
    a^{{\ell}}\leq \sqrt{\left(\frac{1-\zeta}{\zeta}\right)(-Q^{k+1}(\hat{y}^{k+1})-\epsilon)}\leq\sqrt{-\left(\frac{1-\zeta}{\zeta}\right)Q^{k+1}(w^{\tilde{\ell}})}, \quad \forall \ \ell>\tilde \ell
\end{equation*}
so that \eqref{eq:stopping_2} is met and the inexactness condition \eqref{eq:bound} holds with $y^{k+1} = w^{{\ell}}$ for $\ell\ge \tilde \ell$.
\end{proof}
Once again, it is possible to compute a sequence $\{a^{\ell}\}_{\ell\in\N}\subseteq \R$ complying with \eqref{eq:cond_4} if $B$ is the rank level set; see Section~\ref{sec:application} for the details.

\subsubsection{Computation of points satisfying \eqref{eq:inexact}-\eqref{eq:bound}}
Lemma~\ref{lem:stopping} and Lemma~\ref{lem:stopping_2} provide us with an implementable procedure for computing a point $y^{k+1}$ satisfying simultaneously the inexactness conditions \eqref{eq:inexact}-\eqref{eq:bound} employed in Algorithm~\ref{algo:APM_regu_inexact}. Indeed, given three sequences $\{c^{\ell}\}_{\ell\in\N}$, $\{w^{\ell}\}_{\ell\in\N}$, $\{a^{\ell}\}_{\ell\in\N}$ complying with \eqref{eq:cond_1}-\eqref{eq:cond_2}-\eqref{eq:cond_3}-\eqref{eq:cond_4} at any given iteration $k\in\N$, we compute such sequences until the occurrence of the first iteration $\bar{\ell}$ satisfying
\begin{equation}\label{eq:stopping_final}
    \begin{cases}
        \|w^{\bar{\ell}}-{\yck}\|^2\leq \zeta c^{\bar{\ell}}+(1-\zeta)\|y^k-{\yck}\|^2\\
        a^{\bar{\ell}}\leq \sqrt{-\left(\frac{1-\zeta}{\zeta}\right)Q^{k+1}(w^{\bar{\ell}})}.
    \end{cases}
\end{equation}
Such a procedure is well-defined thanks to Lemma~\ref{lem:stopping} and Lemma~\ref{lem:stopping_2}. Then, the inexact projection complying with \eqref{eq:inexact}-\eqref{eq:bound} is given by $y^{k+1}=w^{\bar{\ell}}$.

\subsection{Convergence analysis}
In the following, we prove the convergence of the iterates sequence generated by Algorithm~\ref{algo:APM_regu_inexact} towards a stationary point of $L$, and derive some convergence rates on the gap between the iterates and their limit point. Our analysis is inspired by \cite{attouch2010proximal} and will be carried out by assuming that a certain merit function related to $L$ is a KL function. The presence of an inexact projection complying to conditions \eqref{eq:inexact}-\eqref{eq:bound} leads to some major changes in the convergence proof of Algorithm~\ref{algo:APM_regu_inexact} in comparison to the one available in \cite{attouch2010proximal}.

For our purposes, we introduce the merit function $F:\R^n\times \R^n\times \R\rightarrow \R$ defined as
\begin{equation}\label{eq:merit}
    F(x,y,\rho) = L(x,y) +\frac{1}{2}\rho^2.
\end{equation}
Our analysis relies on the following set of properties, which hold for the sequences generated by iRAPM and involve both the objective function $L$ and the related merit function $F$.

\begin{lemma}\label{lem:properties}
    Let $\{(x^k,y^k)\}_{k\in\N}$ be the sequence generated by Algorithm~\ref{algo:APM_regu_inexact} and let $\{\hat{y}^{k}\}_{k\in\N}$ be any sequence with $\hat{y}^{k+1}\in\mathcal{P}_B(\yck)$ for all $k\in\N$. Furthermore, define the sequences $\{d_k\}_{k\in\N}$, $\{\gamma_k\}_{k\in\N}$ and $\{\rho_k\}_{k\in\N}$ such that, for all $k\in\N$,
    \begin{align}
        d_k &= \sqrt{\|(x^{k+1}-x^k,y^{k+1}-y^k)\|^2-Q^{k+1}(y^{k+1})}, \label{eq:dk}\\
        \gamma_k&=-\left(\frac{1-\zeta}{\zeta}\right)Q^{k+1}(\hat{y}^{k+1}),\label{eq:rk}\\
        \rho_{k} &= \sqrt{\frac{1}{\mu_{k-1}}\|\hat{y}^{k}-y^{k-1}\|^2-2\left(\frac{1-\zeta}{\zeta}\right)Q^{k}(y^{k})}.\label{eq:rhok}
    \end{align}
    The following statements hold true.
    \begin{itemize}
        \item[(i)] We have
        \begin{equation}\label{eq:ineq_dk}
        \|(x^{k+1}-x^k,y^{k+1}-y^k)\| \leq d_k, \quad \sqrt{-Q^{k+1}(y^{k+1})} \leq d_k, \quad \forall \ k\in\N.
        \end{equation}
        \item[(ii)] There exists $a>0$ such that
        \begin{equation*}
            L(x^{k+1},y^{k+1})+ad_k^2\leq L(x^k,y^k), \quad \forall \ k\in \N.
            \end{equation*}    
            \item[(iii)] We have 
            \begin{equation}\label{eq:limit3}
                L(x^{k+1},y^{k+1})\leq F(x^{k+1},\hat{y}^{k+1},{\rho_{k+1}})\leq L(x^k,y^k)+\gamma_k, \quad \forall \ k\in \N. 
            \end{equation}
            Furthermore $\lim\limits_{k\rightarrow \infty} {\rho_k}=\lim\limits_{k\rightarrow \infty}\gamma_k=0$, and
            \begin{equation}\label{eq:limit2}
            \lim_{k\rightarrow \infty}\|\hat{y}^{k+1}-y^{k+1}\|=0.
            \end{equation}
            \item[(iv)] There exists $b>0$ such that 
            \begin{equation}\label{bound_sub}
                \|\partial F(x^{k+1},\hat{y}^{k+1},{\rho_{k+1}})\|_{-}\leq b d_k, \quad \forall \ k\in \N.
            \end{equation}
    \end{itemize} 
\end{lemma}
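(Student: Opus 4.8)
The plan is to prove the four claims in order, exploiting the descent structure of the two sub-steps and the identity rewriting $L$ through $Q^{k+1}$. Claim (i) is immediate: from \eqref{eq:inexact} and \eqref{eq:sign-Q} we have $Q^{k+1}(y^{k+1})\le \zeta Q^{k+1}(\hat y^{k+1})\le 0$, so both summands in the definition \eqref{eq:dk} of $d_k^2$ are nonnegative, and discarding either one yields the two inequalities in \eqref{eq:ineq_dk}. For (ii) I would establish and add two one-step estimates. The $x$-step of Algorithm~\ref{algo:APM_regu_inexact} is the exact minimizer of $\tfrac12\|x-y^k\|^2+\tfrac1{2\lambda_k}\|x-x^k\|^2+\iota_A(x)$; comparing its value at $x^{k+1}$ with that at $x^k\in A$ (indicators vanishing) gives $L(x^{k+1},y^k)+\tfrac1{2\lambda_k}\|x^{k+1}-x^k\|^2\le L(x^k,y^k)$. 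For the $y$-step, rearranging the definition \eqref{eq:Q} of $Q^{k+1}$ at $y^{k+1}$ gives the identity $L(x^{k+1},y^{k+1})=L(x^{k+1},y^k)+Q^{k+1}(y^{k+1})-\tfrac1{2\mu_k}\|y^{k+1}-y^k\|^2$. Adding the two estimates and using $\lambda_k\le\lambda_+$, $\mu_k\le\mu_+$ bounds $L(x^k,y^k)-L(x^{k+1},y^{k+1})$ below by $a\,d_k^2$ with $a=\min\{\tfrac1{2\lambda_+},\tfrac1{2\mu_+},1\}>0$, since in $d_k^2$ each of $\|x^{k+1}-x^k\|^2$, $\|y^{k+1}-y^k\|^2$ and $-Q^{k+1}(y^{k+1})$ then carries a coefficient at least $a$.

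For the chain \eqref{eq:limit3} in (iii) I would reuse the identity $L(x^{k+1},y)=L(x^{k+1},y^k)+Q^{k+1}(y)-\tfrac1{2\mu_k}\|y-y^k\|^2$ for $y\in\{y^{k+1},\hat y^{k+1}\}$, and unwind \eqref{eq:rhok} into $\tfrac12\rho_{k+1}^2=\tfrac1{2\mu_k}\|\hat y^{k+1}-y^k\|^2-\tfrac{1-\zeta}{\zeta}Q^{k+1}(y^{k+1})$. Substituting, the left inequality $L(x^{k+1},y^{k+1})\le F(x^{k+1},\hat y^{k+1},\rho_{k+1})$ reduces after cancellation to $\tfrac1\zeta Q^{k+1}(y^{k+1})-Q^{k+1}(\hat y^{k+1})\le \tfrac1{2\mu_k}\|y^{k+1}-y^k\|^2$, whose left-hand side is $\le 0$ by \eqref{eq:inexact}. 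The right inequality $F(x^{k+1},\hat y^{k+1},\rho_{k+1})\le L(x^k,y^k)+\gamma_k$ follows by combining the same substitution with the $x$-step estimate $L(x^{k+1},y^k)\le L(x^k,y^k)$, and reduces to $Q^{k+1}(\hat y^{k+1})\le(1-\zeta)Q^{k+1}(y^{k+1})$, which holds since $\hat y^{k+1}$ minimizes $Q^{k+1}$ and $Q^{k+1}(y^{k+1})\le 0$.

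The limits in (iii) follow by cascading. Summing the descent inequality of (ii) and using that $L\ge 0$ on $A\times B$ and is nonincreasing along the iterates gives $\sum_k a\,d_k^2<\infty$, hence $d_k\to 0$; by \eqref{eq:ineq_dk} this forces $Q^{k+1}(y^{k+1})\to 0$, and then \eqref{eq:inexact} together with $\tfrac1\zeta Q^{k+1}(y^{k+1})\le Q^{k+1}(\hat y^{k+1})\le 0$ squeezes $Q^{k+1}(\hat y^{k+1})\to 0$, so $\gamma_k\to 0$ from \eqref{eq:rk}. Conditions \eqref{eq:bound} and \eqref{eq:ineq_dk} give $\|\hat y^{k+1}-y^{k+1}\|\le\sqrt{(1-\zeta)/\zeta}\,d_k\to 0$, which is \eqref{eq:limit2}. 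Finally, in \eqref{eq:rhok} the term $-2\tfrac{1-\zeta}{\zeta}Q^k(y^k)\to 0$, while $\tfrac1{\mu_{k-1}}\|\hat y^k-y^{k-1}\|^2\le\tfrac1{\mu_-}(\|\hat y^k-y^k\|+\|y^k-y^{k-1}\|)^2\to 0$ using \eqref{eq:limit2} and $\|y^k-y^{k-1}\|\le d_{k-1}\to 0$; hence $\rho_k\to 0$.

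The delicate point is (iv), and I expect the choice of subgradient to be the crux. Since $F$ is the sum of the $C^1$ function $\tfrac12\|x-y\|^2+\tfrac12\rho^2$ and the separable indicators, $\partial F(x^{k+1},\hat y^{k+1},\rho_{k+1})$ consists of triples $\big((x^{k+1}-\hat y^{k+1})+u,\ (\hat y^{k+1}-x^{k+1})+v,\ \rho_{k+1}\big)$ with $u\in\partial\iota_A(x^{k+1})$ and $v\in\partial\iota_B(\hat y^{k+1})$. The obstacle is that the naive optimality multiplier $\tfrac1{\lambda_k+1}x^k+\tfrac{\lambda_k}{\lambda_k+1}y^k-x^{k+1}$ does \emph{not} give an $O(d_k)$ bound, as it leaves a term proportional to $x^k-y^k$. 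Instead I would take the elements arising from the \emph{prox} form of the two sub-steps, namely $u=(y^k-x^{k+1})-\tfrac1{\lambda_k}(x^{k+1}-x^k)\in\partial\iota_A(x^{k+1})$ and $v=(x^{k+1}-\hat y^{k+1})-\tfrac1{\mu_k}(\hat y^{k+1}-y^k)\in\partial\iota_B(\hat y^{k+1})$, the latter because $\hat y^{k+1}$ exactly minimizes $Q^{k+1}$. With these the first component telescopes to $(y^k-\hat y^{k+1})-\tfrac1{\lambda_k}(x^{k+1}-x^k)$ and the second to $-\tfrac1{\mu_k}(\hat y^{k+1}-y^k)$. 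Bounding $\|\hat y^{k+1}-y^k\|\le(1+\sqrt{(1-\zeta)/\zeta})\,d_k$ via \eqref{eq:bound} and \eqref{eq:ineq_dk}, together with $\|x^{k+1}-x^k\|\le d_k$ and $\rho_{k+1}\le C d_k$ (read off from \eqref{eq:rhok} using \eqref{eq:ineq_dk}), then yields $\|\partial F(x^{k+1},\hat y^{k+1},\rho_{k+1})\|_-\le b\,d_k$ for a suitable $b>0$, which is \eqref{bound_sub}.
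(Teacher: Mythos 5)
Your proposal is correct, and for items (i) and (iv) it coincides with the paper's own argument: the subgradient you select in (iv) --- built from the prox-form optimality conditions of the two sub-steps, with the $B$-multiplier taken at the exact minimizer $\hat{y}^{k+1}$ rather than at $y^{k+1}$ --- is exactly the vector $(v_1^{k+1},v_2^{k+1})$ used in Appendix~\ref{appendix:A2}, and your bounds $\|\hat{y}^{k+1}-y^k\|\leq(1+\sqrt{(1-\zeta)/\zeta})\,d_k$ and $\rho_{k+1}\leq C d_k$ reproduce its estimate (your remark that the projection-residual multiplier would leave an uncontrolled $x^k-y^k$ term is precisely why that choice is made). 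Where you genuinely diverge is in (ii)--(iii). The paper reformulates the inexactness condition \eqref{eq:inexact} as \eqref{eq:inexact_reformulation}, combines it with the prox inequality \eqref{eq:projB} for $\hat{y}^{k+1}$, and rescales by $1+1/\zeta$, arriving at the decrease with constant $a=\min\{1/(2\lambda_+),1/(2\mu_+),\zeta/(\zeta+1)\}$. You instead use the exact identity $L(x^{k+1},y^{k+1})=L(x^{k+1},y^k)+Q^{k+1}(y^{k+1})-\tfrac{1}{2\mu_k}\|y^{k+1}-y^k\|^2$, which is just \eqref{eq:Q} rearranged, so that $-Q^{k+1}(y^{k+1})$ enters the decrease with coefficient $1$ and inexactness is needed only through the sign $Q^{k+1}(y^{k+1})\leq 0$. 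This is a real (if local) simplification: it yields the marginally sharper constant $a=\min\{1/(2\lambda_+),1/(2\mu_+),1\}$, and it isolates the roles of the two conditions --- \eqref{eq:inexact} drives (ii) and the left inequality in \eqref{eq:limit3}, while \eqref{eq:bound} is consumed only by \eqref{eq:limit2} and (iv). Your reduction of \eqref{eq:limit3} to the two scalar inequalities $\tfrac{1}{\zeta}Q^{k+1}(y^{k+1})\leq Q^{k+1}(\hat{y}^{k+1})$ and $Q^{k+1}(\hat{y}^{k+1})\leq(1-\zeta)Q^{k+1}(y^{k+1})$ is algebraically equivalent to the paper's derivation (which adds $\gamma_k$ to \eqref{eq:projB} and invokes minimality of $\hat{y}^{k+1}$), and both scalar facts hold for the reasons you give. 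Finally, for the limits in (iii) you use only $d_k\to 0$, obtained from telescoping (ii) against $L\geq 0$, whereas the paper records the full summability of $d_k^2$ in \eqref{eq:xyQ_summable}; the two are equivalent here, since summability is not needed until Theorem~\ref{thm:iRAPM_convergence}.
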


\begin{proof}
See Appendix~\ref{appendix:A2}.
\end{proof}

\begin{remark}
Item (ii) of Lemma~\ref{lem:properties} shows that the function values $\{L(x^k,y^k)\}_{k\in\N}$ monotonically decrease along the iterations, although $y^{k+1}$ is computed inexactly. An analogous property is also proved for the RAPM algorithm, see \cite[Lemma 3.1(i)]{attouch2010proximal}.

The bound in Item (iv) is a modification of the relative error condition used in the analysis of iterative algorithms under the KL property, which for the RAPM algorithm is written as \cite [Eq. (24)]{attouch2010proximal}
\begin{equation*}
    \|\partial L(x^{k+1},y^{k+1})\|_-\leq b\|(x^{k+1}-x^k,y^{k+1}-y^k)\|, \quad \forall \ k\in\N. 
\end{equation*}
Differently from \cite{attouch2010proximal}, we bound the norm of the subdifferential of the merit function $F$ in place of $L$, at the point $(x^{k+1},\hat{y}^{k+1},{\rho_{k+1}})$ rather than at the actual iterate $(x^{k+1},y^{k+1})$; furthermore, the norm is bounded with a term proportional to $d_k$ in place of the norm $\|(x^{k+1}-x^k,y^{k+1}-y^k)\|$. These differences are due to the fact that the inexactness conditions \eqref{eq:inexact}-\eqref{eq:bound} do not provide any useful information on the subdifferential of $Q^{k+1}$ at the point $y^{k+1}$, while combining the optimality of $\hat{y}^{k+1}$ and conditions \eqref{eq:inexact}-\eqref{eq:bound} we obtain \eqref{bound_sub}. Similar bounds in the literature have been obtained for inexact proximal--gradient methods adopting analogous inexactness conditions, see e.g. \cite{Bonettini-Ochs-Prato-Rebegoldi-2023,Bonettini-Prato-Rebegoldi-2021}.

The bounds in item (iii) are needed to bridge the gap between the actual objective function $L$ and the merit function $F$, whereas the limit in \eqref{eq:limit2} is needed to guarantee that the \REV{KL inequality is applicable to $F$ at the point $(x^k,\hat{y}^k,\rho_k)$ and that each limit point of $\{(x^k,y^k)\}_{k\in\N}$ is stationary for $L$ (see the upcoming proofs of Lemma~\ref{lem:preliminary} and Theorem~\ref{thm:iRAPM_convergence}).}
\end{remark}

Let us define the set of all limit points of the sequence $\{(x^k,y^k)\}_{k\in\N}$, namely
\begin{equation}\label{eq:Xstar}
    \Gamma^*(x^0,y^0) = \{(x^*,y^*)\in A\times B:\exists \ \{k_j\}_{j\in\N}\subseteq \mathbb{N} \ \text{s.t.} \  (x^{k_j},y^{k_j})\rightarrow (x^*,y^*)\}.
\end{equation}
Analogously, $\Xi^*(x^0,y^0,\rho_0)$ denotes the set of all limit points of the sequence $\{(x^k,y^k,\rho_k)\}_{k\in\N}$. By Lemma~\ref{lem:properties}(iii), we have that $\rho_k\rightarrow 0$. Then, we can write $\Xi^*(x^0,y^0,\rho_0)$ as
\begin{equation}\label{eq:Omegastar}
    \Xi^*(x^0,y^0,\rho_0) = \Gamma^*(x^0,y^0)\times \{0\}.
\end{equation}
The following lemma anticipates the main convergence result and contains some basic properties related to the set $\Xi^*(x^0,y^0,\rho_0)$.
\begin{lemma}\label{lem:preliminary}
    Suppose that $\{(x^k,y^k)\}_{k\in\N}$ is bounded. The following statements hold true.
    \begin{itemize}
        \item[(i)] $\Xi^*(x^0,y^0,\rho_0)$ is non-empty and compact.
        \item[(ii)] There exists $L^*\in[0,\infty)$ such that
        \begin{equation*}
            F(x^*,y^*,0) = L(x^*,y^*) = L^*,\quad \forall \ (x^*,y^*)\in \Gamma^*(x^0,y^0).
        \end{equation*}
        \item[(iii)] We have
        \begin{equation*}
            \lim_{k\rightarrow \infty} \operatorname{dist}((x^k,y^k,\rho_k),\Xi^*(x^0,y^0,\rho_0))=\lim_{k\rightarrow \infty}\operatorname{dist}((x^k,\hat{y}^k,\rho_k),\Xi^*(x^0,y^0,\rho_0))=0.
        \end{equation*}
    \end{itemize}
\end{lemma}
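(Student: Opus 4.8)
The plan is to prove the three items in order, relying throughout on the boundedness hypothesis together with the monotonicity and vanishing-increment properties collected in Lemma~\ref{lem:properties}. Since $\rho_k\to 0$ by Lemma~\ref{lem:properties}(iii), the augmented sequence $\{(x^k,y^k,\rho_k)\}_{k\in\N}$ is itself bounded, and the identity \eqref{eq:Omegastar} reduces every claim about $\Xi^*$ to a claim about the accumulation set of a bounded sequence in $\R^{2n+1}$.

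For item (i), I would first invoke Bolzano--Weierstrass: boundedness guarantees at least one convergent subsequence, so $\Xi^*$ is non-empty. Compactness then follows from the standard description of the accumulation set of a bounded sequence as the nested intersection $\Xi^* = \bigcap_{N\in\N}\overline{\{(x^k,y^k,\rho_k):k\geq N\}}$; each set in this intersection is closed and contained in a fixed compact ball, so $\Xi^*$ is a closed subset of a compact set, hence compact.

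For item (ii), the key ingredient is that $\{L(x^k,y^k)\}_{k\in\N}$ is monotonically non-increasing by Lemma~\ref{lem:properties}(ii) and bounded below by $0$, because $x^k\in A$ and $y^k\in B$ force $L(x^k,y^k)=\tfrac12\|x^k-y^k\|^2\geq 0$; a monotone sequence bounded below converges, so there exists $L^*\in[0,\infty)$ with $L(x^k,y^k)\to L^*$. Given any $(x^*,y^*)\in\Gamma^*$, I take a subsequence $(x^{k_j},y^{k_j})\to(x^*,y^*)$; since $A$ and $B$ are closed, $x^*\in A$ and $y^*\in B$, so along this subsequence $L$ agrees with the continuous map $\tfrac12\|\cdot-\cdot\|^2$ and $L(x^{k_j},y^{k_j})\to L(x^*,y^*)$. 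Comparing the two limits yields $L(x^*,y^*)=L^*$, whence $F(x^*,y^*,0)=L(x^*,y^*)+\tfrac12\cdot 0^2=L^*$.

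For item (iii), the first equality is the classical fact that the distance of a bounded sequence to its own accumulation set tends to zero, which I would establish by contradiction: from a subsequence staying at distance $\geq\varepsilon$ from $\Xi^*$ one extracts a further convergent sub-subsequence whose limit lies in $\Xi^*$, a contradiction. The second equality follows by bridging from $y^k$ to $\hat y^k$ via the triangle inequality $\operatorname{dist}((x^k,\hat y^k,\rho_k),\Xi^*)\leq \operatorname{dist}((x^k,y^k,\rho_k),\Xi^*)+\|\hat y^k-y^k\|$, where $\|\hat y^k-y^k\|\to 0$ by \eqref{eq:limit2}, so both terms on the right vanish. I expect the only delicate point to be the continuity argument in item (ii): the function $L$ is merely lower semicontinuous on all of $\R^n\times\R^n$ because of the indicator terms, so one must explicitly use that both the iterates and their accumulation points remain in the closed feasible set $A\times B$, where $L$ reduces to the continuous quadratic $\tfrac12\|x-y\|^2$, in order to upgrade lower semicontinuity to genuine convergence of the function values along the extracted subsequence.
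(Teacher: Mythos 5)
Your proposal is correct and follows essentially the same route as the paper: compactness of $\Xi^*(x^0,y^0,\rho_0)$ as a (nested) countable intersection of compact sets, convergence of the monotone sequence $\{L(x^k,y^k)\}_{k\in\N}$ combined with continuity of $L$ restricted to the closed set $A\times B$ for item (ii), and the standard accumulation-set distance fact plus \eqref{eq:limit2} for item (iii). You merely spell out two points the paper leaves implicit, namely the lower bound $L(x^k,y^k)=\tfrac12\|x^k-y^k\|^2\geq 0$ justifying $L^*\in[0,\infty)$ and the upgrade from lower semicontinuity to genuine continuity of $L$ along feasible subsequences, both of which are handled correctly.
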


{\begin{proof}
(i) Since $\{(x^k,y^k,\rho_k)\}_{k\in\N}$ is bounded, the set $\Xi^*(x^0,y^0,\rho_0)$ is nonempty. As it is a countable intersection of compact sets, the set is also compact \cite[Lemma 5]{Bolte-etal-2014}.

(ii) By Lemma~\ref{lem:properties}(ii) and the continuity of the function $L$ on the set $A\times B$, it follows that there exists $L^*\in\R$ such that
\begin{equation}\label{eq:limL}
    \lim_{k\rightarrow \infty}L(x^k,y^k)=L(x^*,y^*)=L^*,\quad\forall \ (x^*,y^*)\in \Gamma^*(x^0,y^0).
\end{equation}
Then, recalling the definition of $F$ yields the expected result.

(iii) Since $\{(x^k,y^k)\}_{k\in\N}$ is bounded and by definition of $\Gamma^*(x^0,y^0)$ and $\Xi^*(x^0,y^0,\rho_0)$, we have
\begin{equation*}
    \lim_{k\rightarrow \infty}\operatorname{dist}((x^k,y^k),\Gamma^*(x^0,y^0))=\lim_{k\rightarrow \infty}\operatorname{dist}((x^k,y^k,\rho_k),\Xi^*(x^0,y^0,\rho_0))=0.
\end{equation*}
Since  \eqref{eq:limit2} holds and $\{(x^k,\hat{y}^k)\}_{k\in\N}$ is bounded, we also conclude that 
$$
\lim_{k\rightarrow \infty}\operatorname{dist}((x^k,\hat{y}^k,\rho_k),\Xi^*(x^0,y^0,\rho_0))=0.
$$
\end{proof}}
We are now ready to state the main convergence result for the sequence generated by iRAPM, under the assumption that the sequence itself is bounded and $F$ is a KL function. The statement is analogous to the one obtained in Theorem~\ref{thm:conv_APM_regu}(i) for the RAPM algorithm. Before stating and proving our result, we require the following uniformized version of the KL property.

\begin{lemma}\label{lemma:UKL} \cite[Lemma 6]{Bolte-etal-2014}
    Let $F:\R^n\to \bR$ be a proper, lower semicontinuous function and $\Xi\subset \R^n$ a compact set. Suppose that $F$ satisfies the KL property at each point belonging to $\Xi$ and that $F$ is constant over $\Xi$, i.e. $F(z) = \bar{F}\in \R$ for all $z \in \Xi$. Then, there exist $\mu,\upsilon>0$ and a continuous concave function $\phi:[0,\upsilon)\longrightarrow [0,+\infty)$ with $\phi(0) = 0$, $\phi\in C^1(0,\upsilon)$, $\phi'(s) > 0$ for all $s \in (0,\upsilon)$ such that 
    \begin{equation}\label{UKL}
        \phi'(F(z)-\bar{F}) \|\partial F(z)\|_- \geq 1, \quad  \forall \ z\in \bar \Xi 
    \end{equation} 
    where the set $\bar \Xi$ is defined as
    \begin{equation}\label{UKLbound}
        \bar \Xi = \{z\in\R^n: \operatorname{dist}(z,\Xi)<\mu \ \text{and} \ \bar F < F(z) < \bar{F} + \upsilon\}.
    \end{equation}
\end{lemma}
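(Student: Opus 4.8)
The plan is to derive the uniform inequality \eqref{UKL} from the pointwise KL property by a standard compactness argument, the crux being the combination of finitely many local desingularizing functions into a single global one.

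First I would invoke the KL property at each $\bar{z}\in\Xi$ (Definition~\ref{def:KL_original}). Since $F\equiv\bar{F}$ on $\Xi$, the reference value $F(\bar{z})$ equals $\bar{F}$ for every such point, so all the local thresholds are anchored at the same level. This yields, for each $\bar{z}$, a radius $\epsilon_{\bar{z}}>0$, a height $\upsilon_{\bar{z}}>0$ and a desingularizing function $\phi_{\bar{z}}$ such that
\[
\phi_{\bar{z}}'(F(z)-\bar{F})\,\|\partial F(z)\|_{-}\geq 1
\]
holds for all $z$ in $\mathcal{B}(\bar{z},\epsilon_{\bar{z}})\cap\{z: \bar{F}<F(z)<\bar{F}+\upsilon_{\bar{z}}\}$.

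Next I would use compactness. The open balls $\{\mathcal{B}(\bar{z},\epsilon_{\bar{z}})\}_{\bar{z}\in\Xi}$ cover $\Xi$; by compactness extract a finite subcover indexed by $z_1,\dots,z_p$. Setting $\upsilon:=\min_{1\le i\le p}\upsilon_{z_i}>0$ and choosing, via the Lebesgue number lemma (or a direct argument), a radius $\mu>0$ small enough that the open $\mu$-neighborhood of $\Xi$ is contained in $\bigcup_{i=1}^p\mathcal{B}(z_i,\epsilon_{z_i})$ ensures that every $z$ with $\operatorname{dist}(z,\Xi)<\mu$ falls into at least one ball of the subcover. The key construction is then the single desingularizing function: I would restrict each $\phi_{z_i}$ to $[0,\upsilon)$ and set $\phi:=\sum_{i=1}^p\phi_{z_i}$. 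This sum inherits all the required properties, namely $\phi(0)=0$, continuity on $[0,\upsilon)$, $C^1$-smoothness on $(0,\upsilon)$, and $\phi'(s)=\sum_i\phi_{z_i}'(s)>0$, while concavity is preserved since a finite sum of concave functions is concave. To conclude, given any $z\in\bar{\Xi}$, it lies in some $\mathcal{B}(z_i,\epsilon_{z_i})$ and satisfies $\bar{F}<F(z)<\bar{F}+\upsilon\le\bar{F}+\upsilon_{z_i}$, so the local KL inequality for index $i$ applies; since all summands have strictly positive derivative, $\phi'(s)\ge\phi_{z_i}'(s)$, whence $\phi'(F(z)-\bar{F})\|\partial F(z)\|_{-}\ge\phi_{z_i}'(F(z)-\bar{F})\|\partial F(z)\|_{-}\ge 1$.

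The main obstacle is precisely this combination step: one must produce a single $\phi$ that simultaneously dominates each $\phi_{z_i}$ on the overlap regions while retaining concavity, smoothness and the normalization $\phi(0)=0$. The sum achieves exactly this, because positivity of the individual derivatives guarantees $\phi'\ge\phi_{z_i}'$ pointwise, and concavity, $C^1$-regularity and monotonicity are all closed under finite summation. The secondary technical point is the clean choice of $\mu$ so that the $\mu$-tube around $\Xi$ is swallowed by the finite subcover, which is exactly what the Lebesgue number lemma provides; the hypothesis that $F$ is constant on $\Xi$ is what makes the anchoring value $\bar{F}$ common to all the local inequalities, so that they can be merged without mismatched shifts in the argument of $\phi$.
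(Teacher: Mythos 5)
Your proof is correct, and it is essentially the argument behind the cited result: the paper itself states this lemma without proof, importing it from \cite[Lemma 6]{Bolte-etal-2014}, whose proof proceeds exactly as you do --- anchor all local KL inequalities at the common value $\bar{F}$ (using constancy of $F$ on $\Xi$), extract a finite subcover of the compact set $\Xi$, pick $\upsilon$ as the minimum of the local thresholds and $\mu$ so that the $\mu$-tube is absorbed by the subcover, and take $\phi=\sum_{i=1}^p\phi_{z_i}$, exploiting $\phi'\geq \phi_{z_i}'$ by positivity of each derivative. No gaps to report.
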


\begin{theorem}\label{thm:iRAPM_convergence}
     Let  $\{(x^k,y^k)\}$ be generated by Algorithm~\ref{algo:APM_regu_inexact}. If the function $F$ given in \eqref{eq:merit} is a KL function,
    then either $\|(x^k,y^k)\|\rightarrow \infty$, or  $\{(x^k,y^k)\}_{k\in\mathbb{N}}$ converges to a stationary point of $L$.
\end{theorem}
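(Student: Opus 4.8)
The plan is to run the now-standard Kurdyka--\L ojasiewicz (KL) descent argument of \cite{attouch2010proximal,Bolte-etal-2014}, but driven by the merit function $F$ rather than by $L$, so that the inexactness perturbations can be absorbed. The dichotomy is handled at the outset: if $\{(x^k,y^k)\}_{k\in\N}$ is unbounded we are in the first alternative, so I assume the sequence is bounded and prove convergence. Boundedness activates Lemma~\ref{lem:preliminary}, which gives a nonempty, compact limit set $\Xi^*(x^0,y^0,\rho_0)=\Gamma^*(x^0,y^0)\times\{0\}$, a common value $L^*\ge 0$ with $F\equiv L^*$ on $\Xi^*$, and $\operatorname{dist}((x^k,\hat y^k,\rho_k),\Xi^*)\to 0$. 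Since $F$ is a KL function that is constant on the compact set $\Xi^*$, Lemma~\ref{lemma:UKL} furnishes a single desingularizing function $\phi$ and a neighbourhood $\bar\Xi$ of $\Xi^*$ on which the uniformized KL inequality \eqref{UKL} holds.

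The core of the proof is the finite-length property $\sum_k d_k<\infty$. First I would dispose of the trivial case: if $L(x^{k_0},y^{k_0})=L^*$ for some $k_0$, then the monotonicity in Lemma~\ref{lem:properties}(ii) forces $d_k=0$ for all $k\ge k_0$, so the sequence is eventually constant and converges. Otherwise $L(x^k,y^k)>L^*$ for all $k$; combining this with $F(x^{k+1},\hat y^{k+1},\rho_{k+1})\to L^*$ (squeezed between $L(x^{k+1},y^{k+1})$ and $L(x^k,y^k)+\gamma_k$ in \eqref{eq:limit3}) and the distance estimate of Lemma~\ref{lem:preliminary}(iii), the point $(x^{k+1},\hat y^{k+1},\rho_{k+1})$ lies in $\bar\Xi$ for all large $k$. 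There I would apply the KL inequality \eqref{UKL} together with the relative-error bound \eqref{bound_sub} to obtain $1\le b\,d_k\,\phi'(F(x^{k+1},\hat y^{k+1},\rho_{k+1})-L^*)$. Using the concavity of $\phi$ to produce a telescoping difference of $\phi$-values, the sufficient-decrease estimate $a\,d_k^2\le L(x^k,y^k)-L(x^{k+1},y^{k+1})$ of Lemma~\ref{lem:properties}(ii), the sandwich \eqref{eq:limit3}, and the elementary inequality $2\sqrt{uv}\le u+v$, one extracts a recursive bound from which $\sum_k d_k<\infty$ follows by summation of a telescoping series. By Lemma~\ref{lem:properties}(i) this controls $\sum_k\|(x^{k+1}-x^k,y^{k+1}-y^k)\|$, so $\{(x^k,y^k)\}$ is Cauchy and converges to some $(x^*,y^*)\in A\times B$.

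Finally I would identify $(x^*,y^*)$ as a stationary point of $L$. From $\sum_k d_k<\infty$ we get $d_k\to0$, hence $\|\partial F(x^{k+1},\hat y^{k+1},\rho_{k+1})\|_{-}\le b\,d_k\to0$ by \eqref{bound_sub}. Moreover $\hat y^{k+1}\to y^*$ by \eqref{eq:limit2} combined with $y^{k+1}\to y^*$, while $\rho_k\to0$ by Lemma~\ref{lem:properties}(iii) and $F(x^{k+1},\hat y^{k+1},\rho_{k+1})\to L^*=F(x^*,y^*,0)$. Since $F(x,y,\rho)=L(x,y)+\tfrac12\rho^2$ yields $\partial F(x,y,\rho)=\partial L(x,y)\times\{\rho\}$, the closedness (outer semicontinuity) of the limiting subdifferential gives $0\in\partial F(x^*,y^*,0)$, i.e.\ $0\in\partial L(x^*,y^*)$, so $(x^*,y^*)$ is stationary for $L$.

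The hard part will be the second paragraph. Unlike the exact RAPM analysis of \cite{attouch2010proximal}, the sufficient decrease is available only for $L$ at the genuine iterate $(x^{k+1},y^{k+1})$, whereas the KL inequality and the relative-error bound hold for $F$ at the auxiliary point $(x^{k+1},\hat y^{k+1},\rho_{k+1})$. Reconciling the two through the sandwich \eqref{eq:limit3} forces the vanishing perturbations $\gamma_k$ and $\rho_k$ into the telescoping estimate, and the merit function $F$ together with the auxiliary sequences \eqref{eq:rk}--\eqref{eq:rhok} is engineered precisely so that these perturbations do not obstruct the finite-length conclusion. Controlling them, and verifying that $(x^{k+1},\hat y^{k+1},\rho_{k+1})$ eventually enters $\bar\Xi$, is where the inexactness conditions \eqref{eq:inexact}--\eqref{eq:bound} and the estimates of Lemma~\ref{lem:properties} are genuinely used.
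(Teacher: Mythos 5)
Your proposal is correct and follows essentially the same route as the paper's proof: boundedness plus Lemma~\ref{lem:preliminary} to get a compact limit set on which $F$ is constant, the uniformized KL property (Lemma~\ref{lemma:UKL}) applied to $F$ at the auxiliary points $(x^k,\hat{y}^k,\rho_k)$, which enter $\bar\Xi$ via the sandwich \eqref{eq:limit3} and the distance estimate, then the relative-error bound \eqref{bound_sub} and concavity of $\phi$ to obtain the recursion $d_k^2\leq \phi_k d_{k-1}$, hence $2d_k\leq \phi_k+d_{k-1}$, finite length, convergence, and stationarity via closedness of the limiting subdifferential (the paper invokes Lemma~2.1 of Frankel et al.\ for exactly this). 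The only cosmetic deviations are your upfront disposal of the case $L(x^{k_0},y^{k_0})=L^*$, where the paper instead handles $d_k=0$ inside the key inequality, and an index shift in where \eqref{bound_sub} is applied.
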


\begin{proof}
By Lemma~\ref{lem:properties}, it follows that Algorithm~\ref{algo:APM_regu_inexact} is a special case of the abstract descent algorithm described in \cite[Condition 3]{Bonettini-Ochs-Prato-Rebegoldi-2023}. Then, one could apply \cite[Theorem 5]{Bonettini-Ochs-Prato-Rebegoldi-2023} to deduce the thesis. For the sake of completeness, we report a simplified convergence proof especially tailored for Algorithm~\ref{algo:APM_regu_inexact}.
Assume that the sequence $\{(x^k,y^k)\}_{k\in\N}$ generated by Algorithm~\ref{algo:APM_regu_inexact} is bounded. Let $\{d_k\}_{k\in\mathbb{N}}$ and $\{\rho_k\}_{k\in\N}$ be defined as in Lemma~\ref{lem:properties}, and $L^*$ as in Lemma~\ref{lem:preliminary}. By Lemma~\ref{lem:preliminary}(i)-(ii), the function $F$ is constant and equal to $L^*$ over the compact set $\Xi = \Xi^*(x^0,y^0,\rho_0)$, therefore we can apply Lemma~\ref{lemma:UKL}. {In what follows $\upsilon,\mu,\phi, \bar{\Xi}$ are defined in Lemma~\ref{lemma:UKL}, $a,b$ in Lemma~\ref{lem:properties}}.

Lemma~\ref{lem:properties}(iii), Lemma~\ref{lem:preliminary}(iii) and equation \eqref{eq:limL} guarantee the existence of a positive integer $k_0$ such that
\begin{equation}\label{upsilon}
    L(x^k,y^k)+\gamma_k < L^* + \upsilon, \quad \operatorname{dist}( (x^k,\hat{y}^k,\rho_k),\Xi^*(x^0,y^0,\rho_0) ) < \mu, \quad \forall \ k\geq k_0.
\end{equation}
Without loss of generality, up to a translation of the iteration index, we can assume $k_0=0$.

Since \REV{$\{L(x^k,y^k)\}_{k\in\mathbb{N}}$ is a monotone nonincreasing sequence and $L^*$ is its limit (see Lemma~\ref{lem:properties}(iii) and \eqref{eq:limL}), } we have $L(x^k,y^k)\geq L^*$ for all $k\geq 0$, which implies that $\phi$ can be applied to both values $L(x^k,y^k)-L^*$ and $L(x^{k+1},y^{k+1})-L^*$. Then, one can define the following quantity
\begin{equation}\label{eq:phik}
    \phi_k = \frac{b}{a}(\phi(L(x^k,y^k)-L^*)- \phi(L(x^{k+1},y^{(k+1)})-L^*)), \quad \forall \ k\geq 0.
\end{equation}
Since $\phi$ is a monotone increasing function and $\{L(x^k,y^k)\}_{k\in\mathbb{N}}$ a monotone nonincreasing sequence, we have $\phi_k\geq 0$, for all $k\geq 0$. Let us show that the following inequality holds:
\begin{equation}\label{th2:eq1}
    2d_k\leq \phi_k+d_{k-1}, \quad \forall \ k\geq 1.
\end{equation}
If $d_k=0$, inequality \eqref{th2:eq1} holds trivially. Otherwise, for any iteration index $k\geq 1$ such that $d_k>0$ and using Lemma~\ref{lem:properties}(ii)-(iii), we can write
\begin{equation*}
    L^*\leq L(x^{k+1},y^{k+1})< L(x^k,y^k)\leq F(x^k,\hat{y}^k,\rho_k) \leq L(x^{k-1},{y}^{k-1}) + \gamma_{k-1},
\end{equation*}
which, in view of \eqref{upsilon}, implies that $(x^{k},\hat{y}^{k},\rho_k)\in \bar \Xi$.
Then, combining the KL inequality related to the function $F$ at $(x^{k},\hat{y}^{k},\rho_k)$ with Lemma~\ref{lem:properties}(iv) yields:
\begin{equation}\label{eq:same_old_ine}   \phi'(F(x^{k},\hat{y}^{k},\rho_k)-L^*)\geq \frac{1}{\|\partial F(x^{k},\hat{y}^{k},\rho_k)\|_-}\geq \frac{1}{b d_{k-1}}.
\end{equation}
By employing the concavity of the function $\phi$, we obtain
\begin{align*}
\phi(L(x^k,y^k)-L^*)-\phi(L(x^{k+1},y^{k+1})-L^*)&\geq \phi'(L(x^k,y^k)-L^*)(L(x^k,y^k)-L(x^{k+1},y^{k+1})) \\ 
&\geq  \frac{ad_k^2}{bd_{k-1}},
\end{align*}
{where the latter inequality follows by applying Lemma~\ref{lem:properties}(ii)-(iii), the monotonicity of $\phi'$ and \eqref{eq:same_old_ine}.} 
By recalling the definition of $\phi_k$ in \eqref{eq:phik}, the above inequality implies $d_k^2\leq \phi_kd_{k-1}$.
Now taking the square root of both sides and using the inequality $2\sqrt{uv}\leq u+v$ on the right-hand side we get inequality \eqref{th2:eq1}. By summing \eqref{th2:eq1} over $k=0,\ldots,K$ and taking the limit for $K\rightarrow \infty$, following the same arguments as in \cite[Theorem 5]{Bonettini-Ochs-Prato-Rebegoldi-2023}, we easily get
\begin{equation}\label{eq:summable_dk}
    \sum_{k=0}^{\infty}d_k < \infty.
\end{equation} 
Finally, by combining \eqref{eq:summable_dk} and \eqref{eq:ineq_dk}, we conclude that $
\sum_{k=0}^{\infty}\|(x^{k+1}-x^k,y^{k+1}-y^k)\|<\infty$. Hence the sequence $\{(x^k,y^k)\}_{k\in\N}$ converges to a point $(x^*,y^*)\in A\times B$.

Since $\{(x^k,y^k)\}_{k\in\N}$ converges to $(x^*,y^*)$ and $\{\rho_k\}_{k\in\N}$ converges to $0$ (due to Lemma~\ref{lem:properties}(iii)), we have $(x^k,y^k,\rho_k)\rightarrow (x^*,y^*,0)$; furthermore, thanks to \eqref{eq:limit2} and the continuity of $F$ in $A\times B$, it also follows that $(x^k,\hat{y}^k,\rho_k)\rightarrow (x^*,y^*,0)$ and $F(x^k,\hat{y}^k,\rho_k)\rightarrow F(x^*,y^*,0)$. Finally, by summing the inequality  \eqref{bound_sub} for $k=0,\ldots,K$, taking its limit for $K\rightarrow \infty$ and recalling that $\{d_k\}_{k\in\N}$ is summable (see \eqref{eq:summable_dk}), we get
\begin{equation*}
    \sum_{k=0}^\infty\|\partial F(x^k,\hat{y}^k,\rho_k)\|_{-}<\infty \quad \Rightarrow \quad \lim_{k\rightarrow \infty}\|\partial F(x^k,\hat{y}^k,\rho_k)\|_{-}=0.
\end{equation*}
To summarize, the sequence $\{(x^k,\hat{y}^k,\rho_k)\}_{k\in\N}$ satisfies the following limits:
\ste{\begin{equation*}
    \lim_{k\rightarrow \infty}(x^k,\hat{y}^k,\rho_k)= (x^*,y^*,0), \quad \lim_{k\rightarrow \infty}F(x^k,\hat{y}^k,\rho_k)= F(x^*,y^*,0),
    \end{equation*}
    and 
    \begin{equation*}
 \lim_{k\rightarrow \infty}\|\partial F(x^k,\hat{y}^k,\rho_k)\|_{-}=0.
\end{equation*}}
Then, it satisfies the hypotheses of \cite[Lemma 2.1]{Frankel-etal-2015}, which ensures that $0\in \partial F(x^*,y^*,0)$. Since $\partial F(x^*,y^*,0)=\partial L(x^*,y^*)\times \{0\}$, it follows  $0\in\partial L(x^*,y^*)$, i.e., $(x^*,y^*)$ is a stationary point of $L$.
\end{proof}

Inspired by \cite{attouch20009proximal}, we now investigate the convergence rate of $\{(x^k,y^k)\}_{k\in\N}$ when the merit function $F$ given in \eqref{eq:merit} satisfies the KL property at the limit point $(x^*,y^*)$ with $\phi(t)=ct^{1-\theta}$, with $c>0, \ \theta\in [0,1[$. {Similarly to the result obtained for the RAPM algorithm (see Theorem~\ref{thm:conv_APM_regu}(iii)), the convergence rate of iRAPM depends on the value of $\theta$ appearing in the desingularizing function $\phi$.
}

\begin{theorem}\label{thm:iRAPM_rate}
    Suppose that $\{(x^k,y^k)\}_{k\in\N}$ converges to a point $(x^*,y^*)\in A\times B$ . Additionally, assume that $F$ defined in \eqref{eq:merit} satisfies the KL property at $(x^*,y^*)$ with $\phi(t)=ct^{1-\theta}$, with $c>0, \ \theta\in [0,1)$. Then the following convergence rates hold.
    \begin{itemize}
        \item[(i)] If $\theta=0$, the sequence $(x^k,y^k)$ terminates in a finite number of iterations.
        \item[(ii)]If $\theta\in]0,\frac{1}{2}]$, there exists $C>0 $ and $\tau\in (0,1)$ such that
        \begin{equation}\label{eq:linear}
        \|(x^k,y^k)-(x^*,y^*)\|\leq C \tau^k.
        \end{equation}
        \item[(iii)] If $\theta\in]\frac{1}{2},1[$, there exists $C>0$ such that
        \begin{equation}\label{eq:sublinear}
        \|(x^k,y^k)-(x^*,y^*)\|\leq Ck^{-\frac{1-\theta}{2\theta-1}}.
        \end{equation}
    \end{itemize}
\end{theorem}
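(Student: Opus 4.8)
The plan is to bound the distance $\|(x^k,y^k)-(x^*,y^*)\|$ by the tail of the series $\sum_k d_k$, whose summability was already established in \eqref{eq:summable_dk} during the proof of Theorem~\ref{thm:iRAPM_convergence}, and then to control the decay of this tail through the KL inequality specialized to $\phi(t)=ct^{1-\theta}$. Concretely, set $S_K=\sum_{k=K}^{\infty}d_k$; by Lemma~\ref{lem:properties}(i) and the triangle inequality, $\|(x^K,y^K)-(x^*,y^*)\|\le S_K$, so it suffices to estimate the rate at which $S_K\to 0$. Throughout I write $\Delta_k=L(x^k,y^k)-L^*$ and $\Delta_k^F=F(x^k,\hat{y}^k,\rho_k)-L^*$, noting that $0\le\Delta_k\le\Delta_k^F$ by Lemma~\ref{lem:properties}(iii).

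For item (i), where $\theta=0$, I would use the rigidity of the affine desingularizing function. Here $\phi(t)=ct$ and $\phi'(t)\equiv c$, so the uniformized KL inequality \eqref{UKL} forces $\|\partial F(x^k,\hat{y}^k,\rho_k)\|_{-}\ge 1/c$ whenever $\Delta_k^F>0$ and $(x^k,\hat{y}^k,\rho_k)\in\bar\Xi$. Since the right-hand side of Lemma~\ref{lem:properties}(iv) tends to $0$ (because $d_k\to 0$ by summability), this lower bound can hold only finitely often; hence $\Delta_k^F=0$, and therefore $\Delta_k=0$, for all large $k$. Once $L(x^k,y^k)=L^*$, Lemma~\ref{lem:properties}(ii) gives $ad_k^2\le\Delta_k-\Delta_{k+1}=0$, so $d_k=0$ and, by Lemma~\ref{lem:properties}(i), the iterates no longer move: the algorithm terminates in finitely many steps.

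For items (ii)--(iii), I first convert the per-iteration inequality $2d_k\le\phi_k+d_{k-1}$ established in \eqref{th2:eq1} into a recursion for the tail. Summing over $k\ge K+1$ and telescoping $\sum_{k\ge K+1}\phi_k=\tfrac{b}{a}\phi(\Delta_{K+1})$ yields $S_{K+1}\le\tfrac{b}{a}\phi(\Delta_{K+1})+d_K$. Next I bound $\phi(\Delta_{K+1})$ using the KL inequality at $(x^{K+1},\hat{y}^{K+1},\rho_{K+1})$ together with Lemma~\ref{lem:properties}(iv): since $\phi'(t)=c(1-\theta)t^{-\theta}$ is decreasing and $\Delta_{K+1}\le\Delta_{K+1}^F$, one obtains $(\Delta_{K+1}^F)^{\theta}\le c(1-\theta)b\,d_K$, whence $\phi(\Delta_{K+1})\le\phi(\Delta_{K+1}^F)\le C'\,d_K^{(1-\theta)/\theta}$ for some constant $C'>0$. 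Combining these and writing $d_K=S_K-S_{K+1}$ gives the master estimate
\begin{equation*}
    S_{K+1}\le \tilde{C}\,(S_K-S_{K+1})^{(1-\theta)/\theta}+(S_K-S_{K+1}),
\end{equation*}
valid for all large $K$.

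When $\theta\in(0,\tfrac12]$ the exponent $(1-\theta)/\theta\ge 1$, so for $K$ large (where $d_K\le 1$) the first term is dominated by a multiple of $S_K-S_{K+1}$; the master estimate then reads $S_{K+1}\le C''(S_K-S_{K+1})$, which rearranges to $S_{K+1}\le\tau S_K$ with $\tau=C''/(1+C'')\in(0,1)$, giving the geometric bound \eqref{eq:linear}. When $\theta\in(\tfrac12,1)$ the exponent $p:=(1-\theta)/\theta\in(0,1)$ dominates for large $K$, so the master estimate reduces to $S_{K+1}\le C(S_K-S_{K+1})^{p}$; raising to the power $1/p$ turns this into the subgeometric recursion $S_{K+1}^{1/p}\le C^{1/p}(S_K-S_{K+1})$, from which a classical discrete Gronwall-type lemma (as in the rate analysis of \cite{attouch2010proximal}) yields $S_K=\mathcal{O}(K^{-p/(1-p)})$; since $p/(1-p)=(1-\theta)/(2\theta-1)$, this is exactly \eqref{eq:sublinear}. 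I expect the main technical obstacle to be this last step: extracting the polynomial decay from the nonlinear recursion, and in particular checking that the crossover between the two terms of the master estimate is handled uniformly for large $K$, is the delicate part, whereas the linear case and the $\theta=0$ case follow quickly once the master estimate and the subgradient lower bound are in place.
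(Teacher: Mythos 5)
Your proposal is correct and follows essentially the same route as the paper's proof: bounding $\|(x^k,y^k)-(x^*,y^*)\|$ by the tail of $\sum_k d_k$, summing \eqref{th2:eq1} and telescoping $\phi_k$ to get the master recursion $S_{K+1}\leq \tilde{C}(S_K-S_{K+1})^{(1-\theta)/\theta}+(S_K-S_{K+1})$ (the paper's \eqref{eq:crucial1_for_rate}), bounding $\phi(L(x^k,y^k)-L^*)$ via the KL inequality at $(x^k,\hat{y}^k,\rho_k)$ combined with Lemma~\ref{lem:properties}(iv), and resolving the recursion as in the Attouch--Bolte rate analysis. The only cosmetic deviations are that you argue the $\theta=0$ case directly (eventual vanishing of $\Delta_k^F$) where the paper argues by contradiction on infinitely many nonzero steps, and that you spell out the elementary geometric-decay step for $\theta\in(0,\tfrac{1}{2}]$ rather than citing it.
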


\begin{proof}
{(i) If $\theta=0$, then $\phi(t) = ct$ and $\phi'(t) = c$ for $t\in [0,\upsilon)$. By contradiction, assume that there exists an infinite subset $K\subseteq \N$ such that $(x^{k+1},y^{k+1})\neq (x^k,y^k)$ for all $k\in K$. From Lemma~\ref{lem:properties}(i), this means that $d_k> 0$ for all $k\in K$, where $d_k$ is given in \eqref{eq:dk}. By using the same reasoning as in the proof of Theorem~\ref{thm:iRAPM_convergence}, we conclude that $(x^{k},\hat{y}^{k},\rho_k)\in \bar \Xi$ for all $k\in K$. Then, for all $k\in K$, the uniformized KL property (i.e. Lemma~\ref{lemma:UKL}) applies as follows 
\begin{equation}\label{eq:sub_lower_bound}
    \|\partial F(x^k,\hat{y}^k,\rho_k)\|_{-}\geq \frac{1}{\phi'(F(x^k,\hat{y}^k,\rho_k)-L^*)}=\frac{1}{c}, \quad \forall \ k\in K.
\end{equation}
By combining \eqref{eq:sub_lower_bound} with \eqref{bound_sub}, we get $ d_{k-1}\geq \frac{1}{bc} $ $ \forall \ k\in K$ and Lemma~\ref{lem:properties}(ii) yields
\begin{equation*}
    \frac{a}{b^2c^2}\leq L(x^{k-1},y^{k-1})-L(x^{k},y^{k}),\quad \forall \ k\in K,
\end{equation*}
which is absurd since the right-hand side of the inequality is converging to zero. Then item (i) follows.}

(ii)-(iii) If $\theta\in (0,1)$, then let $(x^*,y^*)\in A\times B$ be the unique limit point of $\{(x^k,y^k)\}_{k\in\N}$. By summing inequality \eqref{th2:eq1} for all indexes $k,k+1,\ldots,K$ with $k<K$, we get
\begin{equation}\label{eq:instrumental1}
    2\sum_{\ell = k}^Kd_\ell \leq \sum_{\ell = k}^K\phi_\ell+\sum_{\ell = k-1}^{K-1}d_\ell.
\end{equation}
Due to the definition of $\phi_k$ in \eqref{eq:phik} and the nonnegative sign of the desingularizing function $\phi$, one can also write the inequality below:
\begin{equation}\label{eq:instrumental3}
    \sum_{\ell=k}^K\phi_\ell \leq \frac{b}{a}\phi(L(x^k,y^k)-L^*).
\end{equation}
By applying inequality \eqref{eq:instrumental3} to \eqref{eq:instrumental1}, we get
\begin{align*}
    2\sum_{\ell=k}^Kd_\ell \leq \frac{b}{a}\phi(L(x^k,y^k)-L^*)+\sum_{\ell = k-1}^{K-1}d_\ell. 
\end{align*}
By passing to the limit for $K\rightarrow \infty$ and letting  
\begin{equation*}
\Delta_k = \sum_{\ell = k}^{\infty} d_\ell, \quad \forall \ k\in\N,
\end{equation*}
we finally obtain
\begin{equation}\label{eq:crucial_for_rate}
    \Delta_k \leq \frac{b}{a}\phi(L(x^k,y^k)-L^*)+\Delta_{k-1}-\Delta_k,  \quad \forall \ k\in\mathbb{N}.
\end{equation}
Let $\bar{\Xi}\subseteq \R^n$ be defined as in the proof of Theorem~\ref{thm:iRAPM_convergence}. Since $(x^{k},\hat{y}^{k},\rho_k)\rightarrow (x^*,y^*,0)$, we can assume without loss of generality that $(x^{k},\hat{y}^{k},\rho_k)\in\bar{\Xi}$ for all $k\geq 0$. {Then,  the following inequalities hold}
\begin{equation*}
    \phi'(L(x^k,y^k)-L^*)\geq \phi'(F(x^{k},\hat{y}^{k},\rho_k)-L^*)\geq \frac{1}{\|\partial F(x^{k},\hat{y}^{k},\rho_k)\|_{-}},
\end{equation*}
where the first inequality follows from \eqref{eq:limit3} and the monotonicity of $\phi'$, whereas the second one is the application of Lemma~\ref{lemma:UKL} at the point $(x^{k},\hat{y}^{k},\rho_k)$. Recalling that $\phi(t)=ct^{1-\theta}$ yields
\begin{equation*}
    c(1-\theta)(L(x^k,y^k)-L^*)^{-\theta}\|\partial F(x^{k},\hat{y}^{k},\rho_k)\|_{-}\geq 1,
\end{equation*}
and by rearranging terms in the previous inequality and applying Lemma~\ref{lem:properties}(iv), we get
\begin{align*}
    (L(x^k,y^k)-L^*)^{\theta}&\leq c(1-\theta)\|\partial F(x^{k},\hat{y}^{k},\rho_k)\|_{-}\leq bc(1-\theta)d_{k-1}
    =bc(1-\theta)(\Delta_{k-1}-\Delta_k).
\end{align*}
Hence, the previous inequality guarantees the existence of a positive constant $q>0$ such that
\begin{equation*}
    \phi(L(x^k,y^k)-L^*)\leq q(\Delta_{k-1}-\Delta_k)^{\frac{1-\theta}{\theta}}.
\end{equation*}
Then, inequality  \eqref{eq:crucial_for_rate} leads to
\begin{equation} \label{eq:crucial1_for_rate}
    \Delta_k\leq \frac{bq}{a}(\Delta_{k-1}-\Delta_k)^{\frac{1-\theta}{\theta}}+\Delta_{k-1}-\Delta_k, \quad \forall \ k\in\mathbb{N}.
\end{equation}
Any nonnegative sequence $\{\Delta_k\}_{k\in\N}$ complying with an inequality of form \eqref{eq:crucial1_for_rate} converges to zero with rate either $\mathcal{O}(\tau^k)$, $\tau\in (0,1)$ or $\mathcal{O}(k^{-(1-\theta)/(2\theta-1)})$, depending on the value of $\theta$, see the proof of \cite[Theorem 2]{attouch20009proximal}. {By Lemma~\ref{lem:properties}(i) and the triangular inequality, it is immediate to verify that $\Delta_k\geq \sum_{\ell = k}^{\infty}\|(x^{\ell+1}-x^\ell,y^{\ell+1}-y^{\ell})\|\geq \|(x^{k}-x^*,y^{k}-y^*)\|$. Hence, the sequence $\{\|(x^{k}-x^*,y^{k}-y^{*})\|\}_{k\in\N}$ shares the same convergence rates as $\{\Delta_k\}_{k\in\N}$ and the proof is complete.}
\end{proof}

%%%%%%%%%%%%%%%%%%%%%%%%%%%%%%%%%%%%%%%%%%%
% Application to Affine Rank Minimization %
%%%%%%%%%%%%%%%%%%%%%%%%%%%%%%%%%%%%%%%%%%%
\section{Application to affine rank minimization}\label{sec:application}

In this section, we focus on problem \eqref{affine_rank_min}. Assuming that the rank $r$ of the sought matrix $Z$ is given, the affine rank minimization problem can be formulated as
\begin{equation}\label{eq:feasibility}
    \text{find }Z\in\R^{n_1\times n_2}\text{ such that }Z\in \setNorm\cap \setRank, 
\end{equation}
where
\begin{align}\label{eq:constraints}
    \setNorm &= \{Z\in \mathbb{R}^{n_1\times n_2}: \ {\cal A}(Z) = b\},\\
    \setRank &= \{Z\in\mathbb{R}^{n_1\times n_2}: \rank{Z}\leq r\}.
\end{align}
Since $\setNorm$ is an affine subspace, the projection onto it reduces to a least squares problem; however, in applications such as Matrix Completion \cite{candes2012exact}, it consists in trivial element-wise operations on the matrix. Then, in the following, we consider the projection onto $\setNorm$ as known in closed form and cheap to compute. Regarding the projection onto the set $\setRank$, we can rely on the truncated SVD of the matrix we would like to project, as the well-known result by Eckart and Young \cite[Theorem 2.4.8]{golub2013matrix} states that, given a matrix $G \in \R^{n_1 \times n_2}$ with $r < \rank{G} \le \min \{ n_1, n_2 \}$, the nearest matrix of rank $r$ to $G$ (with respect to  $\| \cdot \| $) is given by the $r$-truncated SVD; using the notation given in \eqref{eq:svd_part} we can write
\begin{equation}
    \label{eq:proj_rank}
    \proj_\setRank(G) = \Tsvd{G} = U_1 \Sigma_1 V_1^T.
\end{equation}
We now  establish the prox-regularity of $C_r$ at any matrix of rank $r$ and the semialgebraicity of $\mathcal{C} $ and $\mathcal{C}_r$. These properties allow us  to  prove  the  convergence  of APM, RAPM and iRAPM  when applied to affine rank minimization problems. 

\begin{lemma} \label{lemma:prox-regular}\cite[Proposition 3.8]{luke2013prox} 
    The set $\setRank$ is prox-regular at all points $\bar{X}\in\R^{n_1\times n_2}$ with $\rank{\bar{X}} = r $.
\end{lemma}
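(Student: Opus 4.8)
The plan is to use the identity $\projRank(G) = \Tsvd{G}$ supplied by the Eckart--Young theorem in \eqref{eq:proj_rank}, and to show that this projection is in fact single-valued on an entire neighbourhood of any $\bar{X}$ with $\rank{\bar{X}} = r$. By the definition of prox-regularity recalled in Section~2 (single-valuedness of $\proj_{\setRank}$ around $\bar{X}$), this is precisely the assertion to be proved. The whole argument therefore reduces to locating a ball around $\bar{X}$ on which the best rank-$r$ approximation is unique.

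First I would isolate the uniqueness criterion for the truncated SVD. The key fact is that the minimizer of $\tfrac{1}{2}\|G - Z\|^2$ over $Z\in\setRank$ is unique if and only if the singular values of $G$ satisfy the strict gap $\sigma_r(G) > \sigma_{r+1}(G)$ (with the convention $\sigma_{r+1}(G) = 0$ when $\rank{G}\le r$). When such a gap holds, the dominant $r$-dimensional left and right singular subspaces are uniquely determined, so the matrix $\Tsvd{G} = U_1\Sigma_1 V_1^T$ is independent of the particular choice of singular vectors, even if several of the top $r$ singular values coincide; conversely, if $\sigma_r(G) = \sigma_{r+1}(G)$ one can rotate inside the degenerate singular space and exhibit distinct rank-$r$ approximations attaining the same optimal value $\sum_{i > r}\sigma_i(G)^2$. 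Establishing this equivalence rigorously is the technical heart of the proof, since it amounts to describing the full minimizer set of a nonconvex problem; I expect this step, rather than the subsequent perturbation estimate, to be the main obstacle.

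It then remains to propagate the gap from $\bar{X}$ to nearby matrices. Since $\rank{\bar{X}} = r$, we have $\sigma_r(\bar{X}) > 0$ and $\sigma_{r+1}(\bar{X}) = 0$. Weyl's perturbation inequality gives $|\sigma_i(G) - \sigma_i(\bar{X})| \le \|G - \bar{X}\|$ for every index $i$ (recalling that $\|\cdot\|$ denotes the Frobenius norm, which dominates the spectral norm). Hence, setting $\delta = \sigma_r(\bar{X})/2$, any matrix $G$ with $\|G - \bar{X}\| < \delta$ satisfies
\begin{equation*}
    \sigma_{r+1}(G) \le \sigma_{r+1}(\bar{X}) + \|G - \bar{X}\| < \delta = \frac{\sigma_r(\bar{X})}{2} < \sigma_r(\bar{X}) - \|G - \bar{X}\| \le \sigma_r(G),
\end{equation*}
so the strict gap $\sigma_r(G) > \sigma_{r+1}(G)$ holds throughout the open ball $\{G : \|G - \bar{X}\| < \delta\}$. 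By the uniqueness criterion above, $\projRank$ is single-valued on this ball, and therefore $\setRank$ is prox-regular at $\bar{X}$, which concludes the argument.
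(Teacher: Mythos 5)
Your proposal is correct in substance, but it follows a genuinely different route from the paper, which in fact offers no proof at all: the lemma is imported verbatim from \cite[Proposition 3.8]{luke2013prox}, where prox-regularity is obtained structurally --- lower semicontinuity of the rank forces every matrix of $\setRank$ near $\bar{X}$ to have rank exactly $r$, so $\setRank$ locally coincides with the smooth manifold of fixed rank-$r$ matrices, and smooth manifolds are prox-regular. You instead verify the definition from Section~2 directly: Eckart--Young identifies $\projRank(G)$ with $\Tsvd{G}$, the gap $\sigma_r(G)>\sigma_{r+1}(G)$ guarantees uniqueness of the best rank-$r$ Frobenius approximation, and Weyl's inequality propagates the gap from $\bar{X}$ (where $\sigma_r(\bar{X})>0=\sigma_{r+1}(\bar{X})$) to the whole ball of radius $\sigma_r(\bar{X})/2$; this perturbation step is airtight and has the added merit of producing an explicit neighborhood, which the manifold argument does not. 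Two remarks on the uniqueness step, which you rightly single out as the technical heart. First, your ``iff'' is not literally correct: when $\sigma_r(G)=\sigma_{r+1}(G)=0$ one has $\rank{G}<r$, so $G\in\setRank$ and the projection is trivially the singleton $\{G\}$; this is harmless, since only the ``if'' direction enters your argument. Second, for that direction, your observation that $\Tsvd{G}=\Pi G$, with $\Pi$ the orthogonal projector onto the dominant left singular subspace (unique under the gap), shows that the truncated SVD is well defined independently of the choice of singular vectors, but you still need that \emph{every} minimizer of $\frac{1}{2}\|G-Z\|^2$ over $\setRank$ arises as a truncated SVD of $G$ --- the classical Eckart--Young--Mirsky characterization of the full solution set. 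That fact is standard and should simply be cited rather than reproved; once it is invoked, your argument is a complete, self-contained, and more elementary alternative to the manifold-based proof behind the citation in the paper.
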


\begin{lemma}\label{lemma_semialgebrici}
    The sets $\setNorm$ and $\setRank$ are closed semialgebraic subsets of $\R^{n_1\times n_2}$.
\end{lemma}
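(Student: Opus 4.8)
The plan is to handle the two sets separately, in each case producing an explicit description that fits the prescribed form \eqref{eq:semialgebraic} and reading off closedness from the continuity of the defining polynomials. In fact both sets will turn out to be \emph{algebraic} (finite intersections of polynomial zero sets), which is the simplest kind of semialgebraic set, so the strict-inequality clauses in \eqref{eq:semialgebraic} will play no essential role.

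First I would dispose of $\setNorm$. Since ${\cal A}:\R^{n_1\times n_2}\to\R^{q}$ is linear, each scalar equation $({\cal A}(Z))_i = b_i$ reads $f_i(Z):=({\cal A}(Z))_i-b_i=0$, where $f_i$ is an affine, hence polynomial, function of the entries of $Z$. Thus
$$\setNorm = \bigcap_{i=1}^{q}\{Z\in\R^{n_1\times n_2}: f_i(Z)=0\},$$
which matches \eqref{eq:semialgebraic} with $p=1$, the polynomials $f_{1,i}=f_i$, and constant negative polynomials $g_{1,i}\equiv -1$ so that the strict inequalities hold vacuously. Closedness is immediate, since $\setNorm={\cal A}^{-1}(\{b\})$ is the preimage of a closed set under a continuous map (equivalently, a finite intersection of hyperplanes).

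For $\setRank$ the decisive ingredient is the classical minor characterization of rank: $\rank{Z}\le r$ holds if and only if every $(r+1)\times(r+1)$ minor of $Z$ vanishes. Each such minor is the determinant of an $(r+1)\times(r+1)$ submatrix, hence a polynomial in the entries of $Z$. Collecting the finite family $m_1,\dots,m_N$ of all these minors (where $N$ is the number of ways of choosing $r+1$ rows and $r+1$ columns) gives
$$\setRank = \bigcap_{t=1}^{N}\{Z\in\R^{n_1\times n_2}: m_t(Z)=0\},$$
again of the form \eqref{eq:semialgebraic} with $p=1$ and the same dummy negative constants for the $g$'s. Closedness then follows either by noting that each $\{m_t=0\}$ is the zero set of a continuous function, so their finite intersection is closed, or, more conceptually, from the lower semicontinuity of the rank function, which makes every sublevel set $\{\rank{\cdot}\le r\}$ closed.

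I do not anticipate a genuine obstacle: the argument reduces to recalling the minor test for rank and checking that a purely equational description conforms to the union-of-intersections template \eqref{eq:semialgebraic}. The only mildly delicate point is cosmetic, namely that \eqref{eq:semialgebraic} pairs each equality $f_{i,j}=0$ with a strict inequality $g_{i,j}<0$; I would handle this explicitly by taking each $g_{i,j}$ to be a negative constant, so that the representations are literally of the required form.
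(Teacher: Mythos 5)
Your proposal is correct and takes essentially the same route as the paper: $\setNorm$ is semialgebraic because it is cut out by finitely many affine (hence polynomial) equations, and $\setRank$ is semialgebraic via the classical characterization $\rank{Z}\leq r \Leftrightarrow$ all $(r+1)\times(r+1)$ minors $\det(Z_{IJ})$ vanish, each minor being a polynomial in the entries of $Z$. Your explicit treatment of closedness and of the dummy strict inequalities $g_{i,j}\equiv -1$ in \eqref{eq:semialgebraic} only spells out details the paper leaves implicit.
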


\begin{proof}
Note that the set $\setNorm$ is an affine subspace, as it is defined by a finite number of affine equations. Hence, $\setNorm$ complies with the definition of semialgebraic set of $\R^{n_1\times n_2}$ given in equation \eqref{eq:semialgebraic}.
Likewise,  {given $I\subseteq \{1,\ldots,n_1\}, \ J\subseteq \{1,\ldots,n_2\}$, we denote with $Z_{IJ}$ the submatrix of $Z$ obtained by keeping the rows and the columns with indexes in $I$ and $J$, respectively.} Observe that 
$$
    \rank Z\leq r \quad\Leftrightarrow \quad \det(Z_{IJ})=0, \ \forall I\subseteq \{1,\ldots,n_1\}, \ J\subseteq \{1,\ldots,n_2\}, \,\mbox{s.t } |I|=|J|=r+1. 
$$
The function $\det:\mathbb{R}^{(r+1)\times (r+1)}\rightarrow \mathbb{R}$ is polynomial with respect to the elements of $Z$. Therefore we can conclude that $\mathcal{C}_r$ is a semialgebraic set, as it is defined by a finite number of polynomial equations.
\end{proof}

\subsection{Convergence Results}
We now specialize the convergence properties of {APM and RAPM} described in Section 2 and {iRAPM proposed in Section 3} when applied to the affine rank minimization problem \eqref{eq:feasibility}. We first need to adjust the notation. {The Euclidean norm naturally becomes the Frobenius norm,} the sets $A$ and $B$ are now $\setNorm$ and $\setRank$, respectively, the alternating projection methods generate two sequences $\{X^k\}_{k\in\mathbb{N}}, \ \{Y^k\}_{k\in\mathbb{N}}$ of matrices and the ``regularized'' points are defined as follows:
\begin{equation}\label{eq:Ykreg}
    \Xck = \frac{1}{1 + \lambda_k} X^k + \frac{\lambda_k}{1 + \lambda_k} Y^k,  \quad \Yck = \frac{\mu_k}{1 + \mu_k} X^{k+1} + \frac{1}{1 + \mu_k} Y^k.
\end{equation}
Then, the function $L$ in \eqref{eq:problem_L} takes the form
\[
    L(X,Y) = \frac{1}{2} \| X-Y \|^2 + \iota_\setNorm (X) + \iota_\setRank (Y).
\]
{From the analysis carried out in \cite{Noll-etal-2016} (see Theorem~\ref{thm:conv_APM}) and the analytical properties of the sets $\mathcal{C}$ and $\mathcal{C}_r$,} we easily derive the following result for APM applied to the feasibility problem \eqref{eq:feasibility}.

\begin{corollary}\label{th:APM_MC}
    {Suppose that there exists $\bar{X}\in\setNorm \cap \setRank$ such that $\rank{\bar{X}}=r$}. Let $\{X^k\}_{k\in\mathbb{N}}\subseteq \R^{n_1\times n_2}$ and $\{Y^k\}_{k\in\mathbb{N}}\subseteq \R^{n_1\times n_2}$ be the sequences generated by APM with $A=\setNorm$ and $B=\setRank$. Then $\{X^k\}_{k\in\mathbb{N}}$ and $\{Y^k\}_{k\in\mathbb{N}}$ locally converge to a matrix $Z\in\R^{n_1\times n_2}$ with $Z\in \setNorm \cap \setRank $ \REV{with rates $\|X^k-Z\|=\mathcal{O}(k^{-\frac{2-\omega}{2\omega}})$ and $\|Y^k-Z\|=\mathcal{O}(k^{-\frac{2-\omega}{2\omega}})$ for some $\omega\in (0,2)$.} 
\end{corollary}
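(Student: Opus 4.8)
The plan is to deduce Corollary~\ref{th:APM_MC} as a direct application of the general local convergence Theorem~\ref{thm:conv_APM} to the specific sets $A = \setNorm$ and $B = \setRank$. The main task is therefore to verify that the hypotheses of that theorem — namely separable intersection and H\"older regularity at a suitable point $\bar{X}$ — are met in this setting, and then to read off the conclusion and the stated rates. By assumption there exists $\bar{X}\in \setNorm\cap\setRank$ with $\rank{\bar{X}}=r$, which provides the candidate intersection point at which to check the regularity conditions.

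The key observation is that both hypotheses follow \emph{automatically} from the structural properties already established. First, by Lemma~\ref{lemma_semialgebrici}, $\setNorm$ and $\setRank$ are closed semialgebraic sets; by \cite[Theorem 3]{Noll-etal-2016} (cited in the excerpt), semialgebraic sets always intersect separably, so there exist an exponent $\omega\in(0,2)$ and a constant $\gamma>0$ such that $\setRank$ intersects $\setNorm$ separably at $\bar{X}$ in the sense of Definition~\ref{def:separable_intersection}. I would invoke this result to obtain the separable intersection hypothesis with a concrete $\omega$. Second, I would argue the $\omega/2$-H\"older regularity of $\setRank$ with respect to $\setNorm$ at $\bar{X}$. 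Here the essential ingredient is Lemma~\ref{lemma:prox-regular}: since $\rank{\bar{X}}=r$, the set $\setRank$ is prox-regular at $\bar{X}$, so the projection $\proj_\setRank$ is single-valued near $\bar{X}$ and well-behaved; combined with the semialgebraicity, one deduces the required H\"older regularity (with exponent $\sigma=\omega/2$ and a constant $c<\gamma/2$, shrinking the neighbourhood and, if necessary, the constant appropriately).

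Once both hypotheses of Theorem~\ref{thm:conv_APM} are verified at $\bar{X}$, the conclusion is immediate: there is a neighbourhood $\mathcal{V}$ of $\bar{X}$ such that any sequences $\{X^k\}$ and $\{Y^k\}$ generated by APM (i.e.\ steps \eqref{APM1}-\eqref{APM2} with $A=\setNorm$, $B=\setRank$, and the Euclidean norm replaced by the Frobenius norm) starting from $Y^0\in\mathcal{V}$ converge to a common limit $Z\in\setNorm\cap\setRank$. The quantitative rates $\|X^k-Z\|=\mathcal{O}(k^{-(2-\omega)/(2\omega)})$ and $\|Y^k-Z\|=\mathcal{O}(k^{-(2-\omega)/(2\omega)})$ are exactly the rates stated in the final sentence of Theorem~\ref{thm:conv_APM}, transcribed for the present sets, which justifies the ``$\REV{\text{for some }\omega\in(0,2)}$'' phrasing in the corollary.

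The step I expect to be the main obstacle is the verification of the H\"older regularity condition (Definition~\ref{def:holder}) at $\bar{X}$, since this is the more technical of the two hypotheses and is not an immediate consequence of semialgebraicity alone. The delicate point is to combine prox-regularity of $\setRank$ at a rank-exactly-$r$ point with the semialgebraic structure to produce the required neighbourhood and constant $c<\gamma/2$; in particular one must ensure the H\"older exponent can be taken to match $\omega/2$ with the \emph{same} $\omega$ furnished by the separable intersection, and that the constant can be made small enough relative to $\gamma$, possibly by shrinking $\mathcal{V}$. Everything else is a routine transcription of Theorem~\ref{thm:conv_APM} to the matrix setting.
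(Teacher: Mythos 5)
Your proposal is correct and follows essentially the same route as the paper: semialgebraicity (Lemma~\ref{lemma_semialgebrici}, together with the fact that semialgebraic sets are subanalytic) yields separable intersection with some $\omega\in(0,2)$ via \cite[Theorem 3]{Noll-etal-2016}, prox-regularity (Lemma~\ref{lemma:prox-regular}) yields the H\"older hypothesis, and Theorem~\ref{thm:conv_APM} gives the conclusion and the stated rates. The step you flag as the main obstacle is in fact dispatched in one line in the paper by \cite[Corollary 3]{Noll-etal-2016}: prox-regularity of $\setRank$ at $\bar{X}$ \emph{alone} (no semialgebraic structure needed for this step) implies $\sigma$-H\"older regularity with respect to $\setNorm$ for every $\sigma\in[0,1)$ and arbitrarily small constant $c$, so matching $\sigma=\omega/2$ with the same $\omega$ and taking $c<\gamma/2$ is automatic.
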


\begin{proof}
{Lemma~\ref{lemma_semialgebrici} ensures that  $\setNorm$ and $\setRank$ are closed semialgebraic subsets of $\R^{n_1\times n_2}$, and every semialgebraic set is subanalytic, see e.g. \cite[p. 1208]{Bolte2007}. Since $\setNorm,\setRank$ are closed and subanalytic, it follows that they intersect separably \REV{with $\omega\in (0,2)$} \cite[Theorem 3]{Noll-etal-2016}.} Being $\setRank$ prox-regular at $\bar{X}$ by Lemma~\ref{lemma:prox-regular}, it follows that $\setRank$ is $\sigma-$H\"older regular with respect to $\setNorm$ at $\bar{X}$ for any $\sigma\in[0,1)$ and any arbitrarily small constant $c$ \cite[Corollary 3]{Noll-etal-2016}. Then, the hypotheses of Theorem~\ref{thm:conv_APM} hold, and the thesis follows.
\end{proof}

Likewise, the following Corollary follows for RAPM when applied to \eqref{eq:feasibility}.

\begin{corollary}\label{th:RAPM_MC}
    Let $\{X^k\}_{k\in\mathbb{N}}\subseteq \R^{n_1\times n_2}$ and $\{Y^k\}_{k\in\mathbb{N}}\subseteq \R^{n_1\times n_2}$ be the sequences generated by RAPM (Algorithm~\ref{algo:APM_regu}) with $A=\setNorm$, $B=\setRank$.
    \begin{itemize}
        \item[(i)] Either $\|(X^k,Y^k)\|\rightarrow \infty$, or the sequence $\{(X^k,Y^k)\}_{k\in\mathbb{N}}$ converges to a stationary point of the function $L$.
        \item[(ii)] Let $(\bar{X},\bar{Y})$ be any point such that $\|\bar{X}-\bar{Y}\|=\min\{\|X-Y\|: \ X\in \setNorm, \ Y\in \setRank\}$. If $(X^0,Y^0)$ is sufficiently close to $(\bar{X},\bar{Y})$, then the whole sequence $\{(X^k,Y^k)\}_{k\in\mathbb{N}}$ converges to a point $(X_{\infty},Y_{\infty})$ such that $\|X_{\infty}-Y_{\infty}\| =\min\{\|X-Y\|: \ X\in \setNorm, \ Y\in \setRank\}$.
        \item[(iii)]{Suppose that the sequence $\{(X^k,Y^k)\}_{k\in\N}$ converges to $(X^*,Y^*)\in \setNorm \times \setRank$. \REV{Then, $\{(X^k,Y^k)\}_{k\in\N}$ satisfies one of the following: it terminates in a finite number of iterations; it converges with a linear rate of the form \eqref{eq:linear}; it converges with a sublinear rate of the form \eqref{eq:sublinear}.}}
    \end{itemize}
\end{corollary}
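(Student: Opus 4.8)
The plan is to reduce everything to Theorem~\ref{thm:conv_APM_regu}, whose sole hypothesis is that the objective $L$ in \eqref{eq:problem_L} be a KL function; once this is verified, items (i), (ii), (iii) follow by invoking the three parts of that theorem in turn. Since RAPM is precisely Algorithm~\ref{algo:APM_regu} with $A = \setNorm$ and $B = \setRank$, no reworking of the iteration itself is needed, and the entire task reduces to certifying the KL property of
\[
L(X,Y) = \tfrac{1}{2}\|X-Y\|^2 + \iota_{\setNorm}(X) + \iota_{\setRank}(Y).
\]

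First I would show that $L$ is semialgebraic. The quadratic coupling term $\tfrac{1}{2}\|X-Y\|^2$ is a polynomial in the entries of $X$ and $Y$, hence trivially semialgebraic. By Lemma~\ref{lemma_semialgebrici}, the sets $\setNorm$ and $\setRank$ are semialgebraic, so their indicator functions are semialgebraic functions, since the graph of $\iota_A$ intersected with $\R^{n_1\times n_2}\times\R$ equals $A\times\{0\}$, which is semialgebraic whenever $A$ is. Using the stability of the semialgebraic class under finite sums, I then conclude that $L$ is a proper, lower semicontinuous semialgebraic function.

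Next I would invoke the fact that every lower semicontinuous semialgebraic function satisfies the KL property at each point of $\dom(\partial L)$, with a desingularizing function of the \L{}ojasiewicz form $\phi(t) = c t^{1-\theta}$ for some $c>0$ and $\theta\in[0,1)$ \cite{Bolte2007,Bolte-etal-2014}. This immediately makes $L$ a KL function, which is exactly the hypothesis required in Theorem~\ref{thm:conv_APM_regu}(i), so part (i) follows at once. For part (ii), the KL property holds in particular at any minimal-distance pair $(\bar{X},\bar{Y})$, so Theorem~\ref{thm:conv_APM_regu}(ii) applies verbatim. For part (iii), the crucial extra ingredient is that semialgebraicity delivers a desingularizing function of the exact form $\phi(t)=c t^{1-\theta}$ demanded by Theorem~\ref{thm:conv_APM_regu}(iii); feeding this in yields finite termination when $\theta=0$, a linear rate of the form \eqref{eq:linear} when $\theta\in(0,\tfrac{1}{2}]$, and a sublinear rate of the form \eqref{eq:sublinear} when $\theta\in(\tfrac{1}{2},1)$.

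I expect the only genuinely delicate point to be securing the \L{}ojasiewicz-type desingularizing function (rather than a generic concave $\phi$), since this is precisely what activates the quantitative rates in part (iii); the qualitative statements in (i)-(ii) need only the generic KL property and are essentially immediate given Lemma~\ref{lemma_semialgebrici}. A minor bookkeeping point worth checking is that the minimal-distance pair in (ii) and the limit point $(X^*,Y^*)$ in (iii) indeed lie in $\dom(\partial L)$, so that the KL inequality is legitimately applicable there; this holds because both are stationary points of $L$, the former by the minimal-distance characterization and the latter by the stationarity guaranteed in part (i).
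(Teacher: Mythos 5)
Your proposal is correct and follows essentially the same route as the paper's proof: both establish that $L$ is semialgebraic (via Lemma~\ref{lemma_semialgebrici} and closure of the semialgebraic class under finite sums), invoke the KL property with desingularizing function $\phi(t)=ct^{1-\theta}$ for semialgebraic functions \cite[Theorem 3.1]{Bolte2007}, and then conclude by applying Theorem~\ref{thm:conv_APM_regu}(i)--(iii) with $A=\setNorm$, $B=\setRank$. The only cosmetic difference is that the paper explicitly records that $L$ is continuous on its closed domain before citing \cite[Theorem 3.1]{Bolte2007}, a hypothesis your argument satisfies implicitly since $L$ restricted to $\setNorm\times\setRank$ is a polynomial.
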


\begin{proof}
The function $L$ is given by the sum of three semialgebraic functions, which implies that $L$ is itself semialgebraic \cite[Example 2]{Bolte-etal-2014}. \REV{Furthermore, $L$ is a semialgebraic function that is continuous on its closed domain, which implies that it satisfies the KL property at any point with $\phi(t)=ct^{1-\theta}$, being $c>0$ and $\theta\in[0,1)$ two parameters depending on the given point \cite[Theorem 3.1]{Bolte2007}.} Hence, the thesis follows by applying Theorem~\ref{thm:conv_APM_regu} with the choice $A=\setNorm$, $B=\setRank$.
\end{proof}

{In analogy with Corollary~\ref{th:APM_MC}, we can be more precise on the convergence rate of RAPM for affine rank minimization problems if we assume that the iterate sequence converges to the intersection of the two sets. In particular, we state and prove the following Corollary, which is, to the best of our knowledge, new, and whose proof employs a result on inf-projection taken from \cite{Yu2022} in combination with a property holding for subanalytic sets \cite{Noll-etal-2016}. 

\begin{corollary}\label{thm:RAPM_precise_rate}
    Suppose that the sequence $\{(X^k,Y^k)\}_{k\in\N}$ generated by RAPM (Algorithm~\ref{algo:APM_regu}) converges to $(X^*,X^*)$ with $X^*\in \mathcal{C}\cap \mathcal{C}_r$. Then, there exists $\theta\in (\frac{1}{2},1)$ such that $\|(X^k,Y^k)-(X^*,X^*)\|=\mathcal{O}(k^{-\frac{1-\theta}{2\theta-1}})$.
\end{corollary}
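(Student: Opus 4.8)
The plan is to pin down the Kurdyka--\L{}ojasiewicz exponent of the merit function $L$ at the limit point $(X^*,X^*)$ and then feed it into Theorem~\ref{thm:conv_APM_regu}(iii). Since $X^*\in\setNorm\cap\setRank$, we have $\|X^*-X^*\|=0$ and both indicator terms vanish, so $L(X^*,X^*)=0$ is the global minimum of $L$; consequently $L^*=0$ and the task reduces to quantifying how fast $L$ grows away from this value. First I would record the geometric input already isolated in the proof of Corollary~\ref{th:APM_MC}: because $\setNorm$ and $\setRank$ are closed subanalytic sets (Lemma~\ref{lemma_semialgebrici}), \cite[Theorem 3]{Noll-etal-2016} guarantees that they intersect separably at $X^*$ with some exponent $\omega\in(0,2)$ in the sense of Definition~\ref{def:separable_intersection}, while Lemma~\ref{lemma:prox-regular} ensures that $\setRank$ is prox-regular at $X^*$, so that $\proj_{\setRank}$ is locally single-valued there.

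Next I would recast the exponent computation through partial minimization in $Y$. I would introduce the inf-projection
\begin{equation*}
    p(X)=\inf_{Y}L(X,Y)=\iota_{\setNorm}(X)+\tfrac{1}{2}\operatorname{dist}(X,\setRank)^2,
\end{equation*}
whose inner minimizer is exactly $\proj_{\setRank}(X)$. I would then invoke the inf-projection calculus of \cite{Yu2022}: after checking that the inner minimization is attained and that the solution map $X\mapsto\proj_{\setRank}(X)$ is locally single-valued and suitably regular near $X^*$ (which is precisely what prox-regularity provides), the exponents of $L$ at $(X^*,X^*)$ and of $p$ at $X^*$ coincide. This localizes everything to the KL exponent of the single squared-distance function $p$ restricted to $\setNorm$.

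The separable-intersection data then enters through the exponent of $p$. Translating the inequality of Definition~\ref{def:separable_intersection} with exponent $\omega$ into a H\"olderian lower bound for $\|\partial p(X)\|_-$ in terms of $\operatorname{dist}(X,\setRank)$, one would obtain that $p$ satisfies the KL inequality with desingularizing function $\phi(t)=ct^{1-\theta}$ for $\theta=(\omega+2)/4$; since $\omega\in(0,2)$, this forces $\theta\in(\tfrac{1}{2},1)$. A reassuring consistency check is that this is exactly the value making the APM rate $k^{-(2-\omega)/(2\omega)}$ of Corollary~\ref{th:APM_MC} and the RAPM sublinear rate $k^{-(1-\theta)/(2\theta-1)}$ agree. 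With $\theta\in(\tfrac{1}{2},1)$ in hand, Theorem~\ref{thm:conv_APM_regu}(iii) would immediately yield $\|(X^k,Y^k)-(X^*,X^*)\|=\mathcal{O}(k^{-(1-\theta)/(2\theta-1)})$, completing the argument.

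I expect the main obstacle to be the step that converts the geometric separable-intersection inequality into a sharp analytic \L{}ojasiewicz/error-bound estimate for $p$, together with a careful verification that the hypotheses of the inf-projection theorem of \cite{Yu2022} genuinely hold for the \emph{nonconvex} set $\setRank$. The prox-regularity of $\setRank$ at $X^*$ and the resulting local single-valuedness and regularity of $\proj_{\setRank}$ should be exactly what makes this verification go through, but the nonconvexity means one must argue entirely locally and cannot fall back on any global strong-convexity or uniqueness of projection.
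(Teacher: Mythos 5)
Your outline shares its skeleton with the paper's proof --- the same inf-projection $m(X)=\frac{1}{2}\operatorname{dist}(X,\setRank)^2+\iota_{\setNorm}(X)$, the same appeal to \cite[Theorem 3.1]{Yu2022}, and the same final application of Theorem~\ref{thm:conv_APM_regu}(iii) --- but both of your middle steps contain genuine gaps. First, \cite[Theorem 3.1]{Yu2022} is a \emph{one-way} transfer: if the full function $L$ satisfies the KL property at $(X^*,X^*)$ with exponent $\theta$, then the inf-projection satisfies it at $X^*$ with the same exponent. You assert that the exponents of $L$ and $p$ ``coincide'' and then propose to compute $p$'s exponent and push it \emph{up} to $L$; that converse direction is not supplied by the cited theorem, and Theorem~\ref{thm:conv_APM_regu}(iii) needs the KL inequality for $L$, not for $p$. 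The paper avoids this by running the implication contrapositively: $L$ is semialgebraic and continuous on its closed domain, so by \cite[Theorem 3.1]{Bolte2007} it satisfies KL with $\phi(t)=ct^{1-\theta}$ for \emph{some} $\theta\in[0,1)$ (a step your proposal never secures); if $\theta\leq \frac{1}{2}$ were possible, Yu's theorem would force the same exponent for $m$, contradicting \cite[Lemma 3]{Noll-etal-2016}, which (the sets being subanalytic) rules out an exponent $\leq\frac{1}{2}$ for $m$. Second, your plan to extract a KL inequality for $p$ with $\theta=(\omega+2)/4$ from separable intersection reverses the implication actually available in \cite{Noll-etal-2016}: there, a \L{}ojasiewicz inequality with exponent $\theta$ \emph{yields} separability with $\omega=4\theta-2$ --- which is exactly why your consistency check $\theta=(\omega+2)/4$ works out --- but the converse is not on record and is doubtful, since Definition~\ref{def:separable_intersection} only constrains quadruples of points generated by consecutive projections and gives no lower bound on $\|\partial p\|_{-}$ over a full neighborhood. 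The quantitative link between $\omega$ and $\theta$ is in any case unnecessary: the paper never identifies $\theta$, it only needs $\theta>\frac{1}{2}$.

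There is also a concrete failure point in your hypothesis verification. You lean on Lemma~\ref{lemma:prox-regular} to get local single-valuedness and ``suitable regularity'' of $\proj_{\setRank}$ near $X^*$, but that lemma gives prox-regularity of $\setRank$ only at matrices of rank \emph{exactly} $r$. If $\operatorname{rank}(X^*)<r$, the projection onto $\setRank$ fails to be single-valued arbitrarily close to $X^*$ (the $r$-th and $(r+1)$-th singular values can coincide near such a point), so your check collapses precisely in a case the corollary must cover. This is also not what \cite[Theorem 3.1]{Yu2022} asks for: the paper verifies the theorem's actual hypotheses --- that $M(\bar{X})=\{\bar{X}\}$ is single-valued at every $\bar{X}\in\setNorm\cap\setRank$ (trivially, since the unique nearest point of $\setRank$ to $\bar{X}\in\setRank$ is $\bar{X}$ itself), that $\partial L\neq\emptyset$ on $\setNorm\times\setRank$, and that $L(X,\cdot)$ is level-bounded locally uniformly in $X$ --- none of which requires prox-regularity. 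The repair is to keep your inf-projection setup, discard the separability-to-KL conversion and the exponent equivalence, insert the semialgebraic KL property of $L$ from \cite{Bolte2007}, and use Yu's transfer theorem in its stated direction against \cite[Lemma 3]{Noll-etal-2016} to exclude $\theta\leq\frac{1}{2}$.
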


\begin{proof}
We let $m:\R^{n_1\times n_2}\rightarrow \R\cup\{\infty\}$ be defined as the inf-projection
\begin{align}\label{eq:f}
    m(X) &= \frac{1}{2}\min_{Y\in \setRank}\|X-Y\|^2+\iota_\setNorm(X) \nonumber\\
    &= \min_{Y\in\R^{n_1\times n_2}}\left(\frac{1}{2}\|X-Y\|^2+\iota_\setNorm(X)+\iota_\setRank(Y)\right)=\min_{Y\in\R^{n_1\times n_2}}L(X,Y).
\end{align}
Let us also denote with $M$ the following set-valued function
\begin{equation}\label{eq:m}
    M(X) = \underset{Y\in \R^{n_1\times n_2}}{\operatorname{argmin}} \ L(X,Y).
\end{equation}
We observe that the hypotheses of \cite[Theorem 3.1]{Yu2022} hold, indeed:
\begin{itemize}
    \item If $\bar{X}\in \setNorm \cap \setRank$, then $M(\bar{X})$ is single-valued and $M(\bar{X})=\{\bar{X}\}$. 
    \item $\partial L(X,Y)\neq \emptyset, \ \forall \ X\in \setNorm, \ \forall \ Y\in \setRank.$
    \item The function $L$ is lower-bounded with respect to $Y$ locally uniformly in $X$, which means that, for all $\bar{X}\in\R^{n_1\times n_2}$ and for all $\alpha\in\R$, the lower-level sets $\mathcal{L}_{\alpha,X}=\{Y\in\R^{n_1\times n_2}: \ L(X,Y)\leq \alpha\}$ are uniformly bounded in a neighbourhood of $\bar{X}$. Indeed, note that
    \begin{equation*}
        \|Y\|\leq \|Y-X\|+\|X\|\leq \sqrt{2\alpha}+\|X\|, \quad \forall \ Y\in \mathcal{L}_{\alpha,X}.
    \end{equation*}
    Then, if $X\in \ste{\mathcal{B}(\bar{X},\epsilon)}$, namely $X$ belongs to the ball of center $\bar{X}$ and radius $\epsilon$, it follows that the sets $\mathcal{L}_{\alpha,X}$ are uniformly bounded by $C=\sqrt{2\alpha}+\|\bar{X}\|+\epsilon$ for all $X\in B(\bar{X},\epsilon)$.
\end{itemize}
Therefore, we can apply \cite[Theorem 3.1]{Yu2022}, which states that, if $\bar{X}\in \mathcal{C}\cap \mathcal{C}_r$ and $L$ satisfies the KL property at the point $(\bar{X},\bar{X})$ with desingularizing function $\phi(t)=Ct^{1-\theta}$, then the inf-projection $m$ defined in \eqref{eq:f} satisfies the KL property at $(\bar{X},\bar{X})$ with the same exponent $\theta\in[0,1)$. 

Now, we recall that $\mathcal{C},\mathcal{C}_r$ are both subanalytic subsets of $\R^{n_1\times n_2}$. Then, from \cite[Lemma 3]{Noll-etal-2016}, it follows that the KL exponent $\theta$ of $m$ at the point $(\bar{X},\bar{X})$ must be strictly greater than $\frac{1}{2}$. Therefore, we can conclude that the KL exponent of $L$ at $(\bar{X},\bar{X})$ is greater than $\frac{1}{2}$, otherwise there would be a contradiction due to \cite[Theorem 3.1]{Yu2022}. At this point, the thesis follows by applying Theorem~\ref{thm:conv_APM_regu}(iii) with the choice $A=\setNorm$, $B=\setRank$\ste{, and $(x^*,y^*) = (X^*,X^*)$}.
\end{proof}}

{
For our proposed iRAPM applied to \eqref{eq:feasibility}, we can prove the following convergence result.}

\begin{corollary}\label{th:iRAPM_MC}
    Let $\{X^k\}_{k\in\mathbb{N}}\subseteq \R^{n_1\times n_2}$ and $\{Y^k\}_{k\in\mathbb{N}}\subseteq \R^{n_1\times n_2}$ be the sequences generated by iRAPM (Algorithm~\ref{algo:APM_regu_inexact}) with $A=\setNorm$, $B=\setRank$. Then either $\|(X^k,Y^k)\|\rightarrow \infty$, or the sequence $\{(X^k,Y^k)\}_{k\in\mathbb{N}}$ converges to a stationary point $(X^*,Y^*)\in\mathcal{C} \times \mathcal{C}_r$ of the function $L$. {In the latter case, the sequence satisfies one of the following: it terminates in a finite number of iterations; it converges with a linear rate of the form \eqref{eq:linear}; it converges with a sublinear rate of the form \eqref{eq:sublinear}.}
\end{corollary}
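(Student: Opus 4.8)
The plan is to obtain this corollary as a direct specialization of the two abstract convergence results already established for iRAPM: Theorem~\ref{thm:iRAPM_convergence} for the global convergence dichotomy, and Theorem~\ref{thm:iRAPM_rate} for the three possible convergence rates. Both results take as their sole hypothesis that the merit function $F(X,Y,\rho)=L(X,Y)+\frac{1}{2}\rho^2$ is a KL function (with the rate statement additionally exploiting that the desingularizing function at the limit point has the power form $\phi(t)=ct^{1-\theta}$). Consequently, the entire argument reduces to verifying that $F$, with the choice $A=\setNorm$ and $B=\setRank$, satisfies the KL property at every point of its domain with a desingularizing function of this form.

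To this end I would first argue that $F$ is semialgebraic. By Lemma~\ref{lemma_semialgebrici} the sets $\setNorm$ and $\setRank$ are semialgebraic, hence so are the indicator functions $\iota_\setNorm$ and $\iota_\setRank$; the squared Frobenius norm $\frac{1}{2}\|X-Y\|^2$ is polynomial, hence semialgebraic; and therefore $L(X,Y)=\frac{1}{2}\|X-Y\|^2+\iota_\setNorm(X)+\iota_\setRank(Y)$ is semialgebraic as a finite sum of semialgebraic functions \cite[Example 2]{Bolte-etal-2014}, exactly as already observed in the proof of Corollary~\ref{th:RAPM_MC}. Adding the polynomial term $\frac{1}{2}\rho^2$ preserves semialgebraicity, so $F$ is semialgebraic, and it is moreover continuous relative to its closed domain $\setNorm\times\setRank\times\R$. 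I would then invoke \cite[Theorem 3.1]{Bolte2007}, which ensures that a semialgebraic function that is continuous on its closed domain satisfies the KL property at every point with a desingularizing function $\phi(t)=ct^{1-\theta}$ for some $c>0$ and $\theta\in[0,1)$ depending on the point. In particular $F$ is a KL function.

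With $F$ identified as a KL function, Theorem~\ref{thm:iRAPM_convergence} immediately yields the dichotomy: either $\|(X^k,Y^k)\|\to\infty$, or $\{(X^k,Y^k)\}_{k\in\N}$ converges to a stationary point $(X^*,Y^*)$ of $L$, which lies in $\setNorm\times\setRank$ because the iterates are feasible by construction. In the convergent alternative, I would then apply Theorem~\ref{thm:iRAPM_rate} at the limit $(X^*,Y^*)$: since $F$ satisfies the KL property there with $\phi(t)=ct^{1-\theta}$, the three cases of that theorem transfer verbatim, giving finite termination when $\theta=0$, a linear rate of the form \eqref{eq:linear} when $\theta\in(0,\frac{1}{2}]$, and a sublinear rate of the form \eqref{eq:sublinear} when $\theta\in(\frac{1}{2},1)$. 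I do not anticipate a genuine obstacle here; the only point requiring care is that the KL property supplied by the semialgebraic theory comes with a power-form desingularizer rather than a generic $\phi$, but this is precisely the content of \cite[Theorem 3.1]{Bolte2007} and mirrors the reasoning already employed for RAPM in Corollary~\ref{th:RAPM_MC}. Once the semialgebraicity of $F$ is in place, the specialization is essentially a bookkeeping exercise.
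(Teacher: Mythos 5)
Your proposal is correct and takes essentially the same route as the paper's own proof: both establish the semialgebraicity of $F$ (via Lemma~\ref{lemma_semialgebrici} and the sum rule from \cite[Example 2]{Bolte-etal-2014}) together with its continuity on its closed domain, invoke \cite[Theorem 3.1]{Bolte2007} to get the KL property with $\phi(t)=ct^{1-\theta}$, and then apply Theorems~\ref{thm:iRAPM_convergence} and~\ref{thm:iRAPM_rate} with $A=\setNorm$, $B=\setRank$. Your version merely spells out in more detail the semialgebraicity of $L$ that the paper delegates to the proof of Corollary~\ref{th:RAPM_MC}.
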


\begin{proof}
Since the merit function 
\begin{equation}\label{eq:FXYR}
F(X,Y,\rho)=L(X,Y)+\frac{1}{2}\rho^2,
\end{equation}
is the sum of two semialgebraic functions, it follows that $F$ is itself semialgebraic; \REV{furthermore, $F$ is continuous on its closed domain. Hence, by \cite[Theorem 3.1]{Bolte2007}, $F$ satisfies the KL property at any point with $\phi(t)=ct^{1-\theta}$, being $c>0$ and $\theta\in[0,1)$.} Then the thesis follows by applying Theorems~\ref{thm:iRAPM_convergence}-\ref{thm:iRAPM_rate} to Algorithm~\ref{algo:APM_regu_inexact} with the choice $A=\setNorm$, $B=\setRank$.
\end{proof}

{As done for RAPM, we also specify the convergence rate of iRAPM when the iterates sequence converges to an intersection point.

\begin{corollary}\label{thm:iRAPM_precise_rate}
Suppose that the sequence $\{(X^k,Y^k)\}_{k\in\N}$ generated by iRAPM (Algorithm~\ref{algo:APM_regu_inexact}) converges to $(X^*,X^*)$ with $X^*\in \mathcal{C}\cap \mathcal{C}_r$. Then, there exists $\theta\in (\frac{1}{2},1)$ such that $\|(X^k,Y^k)-(X^*,X^*)\|=\mathcal{O}(k^{-\frac{1-\theta}{2\theta-1}})$.
\end{corollary}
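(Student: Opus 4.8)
The plan is to reduce the claim to the exponent analysis already performed for RAPM in Corollary~\ref{thm:RAPM_precise_rate} and then port that information from $L$ to the merit function $F$ that drives the rate theorem for iRAPM. The key observation is that Theorem~\ref{thm:iRAPM_rate}(iii) already yields the desired estimate $\|(X^k,Y^k)-(X^*,X^*)\|=\mathcal{O}(k^{-(1-\theta)/(2\theta-1)})$ as soon as one knows that $F$ satisfies the KL property at the limit point $(X^*,X^*,0)$ with an exponent $\theta\in(\tfrac12,1)$; so the whole problem is to produce such a $\theta$.

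First I would import, verbatim, the intermediate conclusion of the proof of Corollary~\ref{thm:RAPM_precise_rate}: under the present hypothesis $X^*\in\mathcal{C}\cap\mathcal{C}_r$, the function $L$ satisfies the KL property at $(X^*,X^*)$ with desingularizing function $\phi(t)=ct^{1-\theta}$ for some $\theta\in(\tfrac12,1)$. This step is legitimate because it involves only $L$ and the two sets $\mathcal{C},\mathcal{C}_r$, which are the same for both algorithms; it rests on the inf-projection transfer result \cite[Theorem 3.1]{Yu2022} together with the subanalyticity lower bound on the exponent from \cite[Lemma 3]{Noll-etal-2016}.

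The hard part will be transferring this exponent from $L$ to $F(X,Y,\rho)=L(X,Y)+\tfrac12\rho^2$. Here $F$ is the block-separable sum of $L$, in the block $(X,Y)$, and the one-dimensional quadratic $\tfrac12\rho^2$, in the block $\rho$; the latter has KL exponent $\tfrac12$ at $\rho=0$. By the calculus rule for the exponent of a separable sum, the KL exponent of $F$ at $(X^*,X^*,0)$ is $\max\{\theta,\tfrac12\}=\theta$, which remains in $(\tfrac12,1)$. Since no such rule is cited in the paper, I would verify it directly: the subdifferential splits as $\partial F(X,Y,\rho)=\partial L(X,Y)\times\{\rho\}$, so $\|\partial F\|_{-}^{2}=\|\partial L\|_{-}^{2}+\rho^2$ while $F-L^*=(L-L^*)+\tfrac12\rho^2$; writing $a=L-L^*$ and $b=\tfrac12\rho^2$ and combining the KL bound $\|\partial L\|_{-}\ge Ca^{\theta}$ with the elementary estimates $\rho^2=2b\ge 2b^{2\theta}$ (valid for small $b$ because $2\theta\ge1$) and $(a+b)^{2\theta}\le 2^{2\theta-1}(a^{2\theta}+b^{2\theta})$ gives $\|\partial F\|_{-}\ge C'(F-L^*)^{\theta}$ throughout a neighbourhood of $(X^*,X^*,0)$, i.e. the KL property for $F$ with exponent $\theta$.

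Finally, having established that $F$ enjoys the KL property at $(X^*,X^*,0)$ with $\theta\in(\tfrac12,1)$, I would close the argument by invoking Theorem~\ref{thm:iRAPM_rate}(iii) with $A=\mathcal{C}$, $B=\mathcal{C}_r$ and $(x^*,y^*)=(X^*,X^*)$, which delivers exactly the stated sublinear rate.
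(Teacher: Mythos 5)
Your proposal is correct and follows essentially the same route as the paper: import the exponent $\theta\in(\frac{1}{2},1)$ for $L$ at $(X^*,X^*)$ from the proof of Corollary~\ref{thm:RAPM_precise_rate}, view $F=L+\frac{1}{2}\rho^2$ as a block-separable sum whose second block has KL exponent $\frac{1}{2}$, conclude that $F$ inherits the exponent $\max\{\theta,\frac{1}{2}\}=\theta$ at $(X^*,X^*,0)$, and invoke Theorem~\ref{thm:iRAPM_rate}(iii) with $A=\mathcal{C}$, $B=\mathcal{C}_r$. The only difference is that where the paper cites \cite[Theorem 3.3]{Li2018} for the separable-sum exponent rule, you verify it inline, and your elementary argument (using $\partial F(X,Y,\rho)=\partial L(X,Y)\times\{\rho\}$, the bound $2b\ge 2b^{2\theta}$ for small $b$ since $2\theta\ge 1$, and the power inequality $(a+b)^{2\theta}\le 2^{2\theta-1}(a^{2\theta}+b^{2\theta})$) is sound.
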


\begin{proof}
Letting $Z_1 = (X,Y)$, $Z_2 = \rho$ and $Z=(Z_1,Z_2)$, the function $F$ given in \eqref{eq:FXYR} can be seen as a block separable sum of the form $F(Z) = F_1(Z_1)+F_2(Z_2)$, where $F_1=L$ and $F_2(Z_2)=\frac{1}{2}Z_2^2$. Note that $F_1$ satisfies the KL property at any point $\bar{X}\in\mathcal{C}\cap\mathcal{C}_r$ with exponent $\theta>\frac{1}{2}$ (see proof of Theorem~\ref{thm:RAPM_precise_rate}), whereas $F_2$ has the same property with exponent $\frac{1}{2}$. By \cite[Theorem 3.3]{Li2018}, we conclude that $F$ has the KL property at any point $\bar{X}\in\mathcal{C}\cap\mathcal{C}_r$ with exponent $\theta>\frac{1}{2}$. Finally, the thesis follows by applying Theorem~\ref{thm:iRAPM_rate}(iii) to Algorithm~\ref{algo:APM_regu_inexact} with the choice $A=\setNorm$, $B=\setRank$\ste{, and $(x^*,y^*) = (X^*,X^*)$}.     
\end{proof}}

\subsection{Inexact Projection} \label{sec:Lanczos} 
\REV{In the following, we fix $k\in\mathbb{N}$ and let $Y_{reg}^k$ be the matrix to be projected and defined in \eqref{eq:Ykreg}, $\hat{Y}^{k+1}$ any matrix belonging to the projection set $\mathcal{P}_{\mathcal{C}_r}(Y_{reg}^k)$, and $Y_{k+1}$ the inexact projection satisfying \eqref{eq:inexact}-\eqref{eq:bound}.}

{As outlined in Section~\ref{sec:practical}, the accuracy requirements in \eqref{eq:inexact}-\eqref{eq:bound} can be enforced without the knowledge of the exact projection provided that it is possible to define two sequences of scalars $c^\ell, a^\ell$ and a sequence of matrices $W^\ell$ satisfying \eqref{eq:cond_1}-\eqref{eq:cond_3} {and \eqref{eq:cond_4}}. Then,  the inexact projection is given by  $W^{\bar \ell}$, where $\bar \ell$ is the first iteration such that  \eqref{eq:stopping_final} is met.
In this section, by exploiting  the Lanczos process \cite{saad2011numerical,larsen1998lanczos}, we are going to show how to  compute an approximation of the  $r$-truncated SVD of $\Yck$ that represents an inexact projection satisfying
\eqref{eq:inexact}-\eqref{eq:bound}.
We first  briefly describe how the Lanczos method works. }

\subsubsection{The Lanczos method}
We employ the Lanczos procedure for approximate the $r$ largest singular values of the matrix $\Yck\in\R^{n_1\times n_2}$. Given an integer $\ell$ with $r \le \ell \le \min\{n_1, n_2\}$, such a procedure uses Lanczos bidiagonalization, also known as Paige-Saunders bidiagonalization \cite{paige1982lsqr}, to find two orthonormal matrices $P_{\ell+1} \in \R^{n_1 \times (\ell +1)}$, $Q_\ell \in \R^{n_2 \times \ell}$, such that
\[
    P_{\ell+1}^T \Yck Q_\ell = B_\ell = 
    \left(
    \begin{array}{c@{\hspace{.5em}}c@{\hspace{.5em}}c@{\hspace{.5em}}c}
        \alpha_1 &           &        &  \\
        \beta_2  & \alpha_2  &        &  \\
                 & \beta_3   & \smash{\ddots} &  \\
                 &           & \smash{\ddots} & \alpha_\ell \\
                 &           &        & \beta_{\ell+1}
    \end{array}
    \right) \in \R^{(\ell+1) \times \ell}.
\]
If we consider the columns partitioning \REV{$P_{\ell+1} = [p_1, \dots, p_{\ell + 1}]$} and $Q_\ell = [q_1, \dots, q_{\ell}]$, the column vectors $p_j, q_j$ are the so-called \textit{Lanczos vectors} and  can be built by recurrence, starting from two vectors $q_0, \ p_1$;
indeed the following recurrence holds for $\ell = 1, 2,\dots$:
\begin{align}
    \alpha_\ell q_\ell &=(\Yck)^T p_\ell - \beta_\ell q_{\ell-1} \label{eq:Lanczos_recurrence_vector_1} \\
    \beta_{\ell+1} p_{\ell+1} &= \Yck q_\ell - \alpha_\ell p_\ell, \label{eq:Lanczos_recurrence_vector_2}
\end{align}
where $\alpha_\ell, \ \beta_\ell$ are chosen such that the Lanczos vectors have unitary norm. Moreover, we can rewrite the recurrence in matrix form as:
\begin{align}
    \Yck Q_\ell &= P_{\ell + 1} B_\ell \label{eq:Lanczos_recurrence_matrix_1} \\
    (\Yck)^T P_{\ell +1} &= Q_\ell B_\ell^T + \alpha_{\ell+1}q_{\ell+1}e_{\ell+1}^T, \label{eq:Lanczos_recurrence_matrix_2}
\end{align}
where $e_{\ell+1}$ is the $(\ell+1$)-th vector of the canonical basis.
Given the bidiagonal matrix $B_\ell$ at hand,  its SVD $B_\ell = U_B \Sigma_B V_B^T$ can be cheaply computed and   the following matrix $ \ML_\ell\in\R^{n_1\times n_2}$ is \REV{defined}%obtained
\begin{equation} \label{svd_Z}
    \cZ_\ell = \REV{P_{\ell+1} B_{\ell} Q_{\ell}^T}= \underbrace{P_{\ell+1} U_B}_{\tilde U_\ell} \underbrace{\Sigma_B}_{\tilde \Sigma_\ell} \underbrace{V_B^T Q_\ell^T}_{\tilde V_\ell^T}=\tilde U_\ell \tilde \Sigma_\ell \tilde V_\ell^T,
\end{equation}
where $\tilde \Sigma_\ell\REV{\in\mathbb{R}^{(\ell+1)\times \ell}}$ is a rectangular diagonal matrix whose diagonal entries $\tilde \sigma_{1,\ell}, \dots, \tilde \sigma_{\ell,\ell}$ can be seen as approximations of the largest $\ell$ singular values \REV{$\sigma_1,\dots,\sigma_\ell$} of $\Yck$. \REV{Note also that the first $\ell$ singular values of $B_\ell$ and $G_\ell$ are the same and $G_\ell$ has $\min\{n_1,n_2\}-\ell$ extra null singular values.}
With the aim of approximating the $r$-truncated SVD of $\Yck$, using the {notation  \eqref{eq:svd_part}}, we  write 
\[
    B_\ell = \underbrace{(U_{B})_1 (\Sigma_{B})_1 (V_{B})_1^T}_{=[B_{\ell}]_1} + \underbrace{(U_B)_{2} (\Sigma_B)_{2} (V_B)_{2}^T}_{=[B_{\ell}]_2}=[B_{\ell}]_1+[B_{\ell}]_2 ,
\]
with $(U_B)_1 \in \R^{(\ell+1) \times r}, (U_B)_2 \in \R^{(\ell+1) \times (\ell+1-r)}, (V_B)_1 \in \R^{\ell \times r}, (V_B)_2 \in \R^{\ell \times (\ell-r)}, \Sigma_1 \in \R^{r \times r}, \Sigma_2 \in \R^{(\ell+1-r) \times (\ell-r)}$. Then, \REV{using the notation $G_{\ell} = [G_{\ell}]_1+[G_{\ell}]_2$, where $[G_{\ell}]_1 = P_{\ell+1} [B_{\ell}]_1Q_\ell^T$ and $[G_{\ell}]_2 = P_{\ell+1} [B_{\ell}]_2Q_\ell^T$}, we can truncate the decomposition in \eqref{svd_Z} and define
\begin{equation*} \label{eq:wl}  
    \REV{[G_\ell]_1=} \underbrace{P_{\ell+1} (U_{B})_1}_{\tilde U_r} \underbrace{(\Sigma_{B})_1}_{\tilde \Sigma_r} \underbrace{(V_{B})_1^T Q_\ell^T}_{\tilde V_r^T}=\tilde U_r \tilde \Sigma_r \tilde V_r .
\end{equation*}

The preceding discussion indicates how singular values estimates can be obtained via the Lanczos method, but it reveals nothing about the approximation quality of the singular values and corresponding singular vectors. 
We refer to \cite[Section 10.4]{golub2013matrix} and references therein for the convergence results of the Lanczos procedure. Here, we highlight the following properties that are crucial for our analysis: 
\begin{itemize}
    \item in exact arithmetic the method  has finite termination; indeed, when $\ell =\min\{n_1,n_2\}$, the SVD in \eqref{svd_Z} is the exact SVD of $\Yck$. However, the procedure is generally implemented as an iterative procedure due to round-off errors and to avoid to perform a large number of steps;
    \item the inexact singular values $\tilde \sigma_{i,\ell}$ approximate the corresponding $\sigma_i$ from below \cite[Th.  6.4]{saad2011numerical};
    \item letting $(U_B)_{\ell +1, j}$ be the last element of the $j$-th column of $U_B$,  as shown in \cite{larsen1998lanczos}, we have that 
    \begin{equation} \label{eq:Lanczos_error_bound_v2}
        \min_{\sigma \in \sigma(\Yck)} | \sigma - \tilde \sigma_{j, \ell} | \le | \alpha_{\ell+1} | |(U_B)_{\ell +1, j}|.
    \end{equation}
\end{itemize}
In the classic implementations \cite{larsen1998lanczos}, the index $\ell$ is adaptively increased in order to reach the desired accuracy in the approximation of the first $r$ singular values, using the error bound given in \eqref{eq:Lanczos_error_bound_v2}, that is  based only on a byproduct of the Lanczos method.
\REV{In  the next subsection, we show  that also the accuracy requirements 
\eqref{eq:stopping_final} in iRAPM can be checked with no extra computational cost using information already provided by the Lanczos process.
}

\subsubsection{Inexactness Criteria}
Letting  $ \ML_\ell$ be the matrix  given in \eqref{svd_Z}, \ste{note that its first $\ell$ singular values are $\tilde \sigma_{1,\ell}, \dots, \tilde \sigma_{\ell,\ell}$, while the remaining are zero.} {Although the singular values $\tilde{\sigma}_{i,\ell}$, $i=1,\ldots,\ell$, are well-defined only for $\ell=1,\ldots,\min\{n_1,n_2\}$, we can artificially set $\tilde\sigma_{i,\ell} = \sigma_i$, $i=1,\ldots,\min\{n_1,n_2\}$ for $\ell > \min\{n_1, n_2\}$. We} assume that at step $\ell>r$ of the Lanczos procedure it holds $\tilde\sigma_{r,\ell} > \tilde\sigma_{r+1,\ell} $, i.e., the $r$ and $r+1$ singular values of $\cZ_\ell$ are distinct. This is a reasonable assumption since the method is expected to converge to a $r$-rank matrix. Then, let $\kappa_\ell\REV{>0}$ be given by 
\begin{equation}
    \label{eq:const_A}
    \kappa_\ell = \frac{2}{1-\gamma} \cdot \frac{(1-\gamma)\tilde\sigma_{r,\ell}+\gamma\tilde\sigma_{r+1,\ell}}{\tilde\sigma_{r,\ell}-\tilde\sigma_{r+1,\ell}},
\end{equation}
where $\gamma \in (0,1)$. For any $\ell>r$, we define the sequences
\begin{align}
    W^{\ell}&=
    \begin{cases}
        [G^{\ell}]_1, \quad & \text{if } \ell\leq \min\{n_1,n_2\},\\
        [Y_{reg}^k]_1, \quad & \text{if } \ell>\min\{n_1,n_2\}
    \end{cases}\label{eq:Wl}\\
    c^\ell &= {\sum_{i=r+1}^{\min\{\ell, n_1, n_2\}} \tilde \sigma_{i,\ell}^2}, \label{eq:cl} \\
    a^\ell &= 
    \begin{cases}
        \kappa_\ell \| \Yck - \cZ_\ell \| &\text{for } \ell \leq \min\{n_1,n_2\}, \\
        0 &\text{for } \ell > \min\{n_1, n_2\}
    \end{cases}. \label{eq:al}
\end{align}
In the following, we show that these sequences satisfy \eqref{eq:cond_1}-\eqref{eq:cond_3} and \eqref{eq:cond_4}.
\begin{lemma}
    The sequences \eqref{eq:Wl} and \eqref{eq:cl} satisfy conditions \eqref{eq:cond_1}-\eqref{eq:cond_3}.
\end{lemma}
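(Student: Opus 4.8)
The plan is to verify the three requirements separately, relying on two facts about the Lanczos approximation recalled in the previous subsection: the estimates underestimate the exact singular values, i.e.\ $\tilde\sigma_{i,\ell}\le\sigma_i$ for every $i$ (the property from \cite[Th.~6.4]{saad2011numerical}), and the finite termination of the bidiagonalization, by which the decomposition \eqref{svd_Z} coincides with the exact SVD of $\Yck$ as soon as $\ell=\min\{n_1,n_2\}$. Throughout I would exploit the Eckart--Young identity, which yields $\|\hat{Y}^{k+1}-\Yck\|^2=\|[\Yck]_2\|^2=\sum_{i=r+1}^{\min\{n_1,n_2\}}\sigma_i^2$, with $\sigma_i$ the exact singular values of $\Yck$.

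First I would settle condition \eqref{eq:cond_1}. For $r<\ell\le\min\{n_1,n_2\}$ one has $c^\ell=\sum_{i=r+1}^{\ell}\tilde\sigma_{i,\ell}^2$; bounding each term by $\tilde\sigma_{i,\ell}^2\le\sigma_i^2$ and observing that the omitted indices $i=\ell+1,\dots,\min\{n_1,n_2\}$ only add nonnegative contributions $\sigma_i^2$, I obtain
\begin{equation*}
    c^\ell\le\sum_{i=r+1}^{\ell}\sigma_i^2\le\sum_{i=r+1}^{\min\{n_1,n_2\}}\sigma_i^2=\|\hat{Y}^{k+1}-\Yck\|^2.
\end{equation*}
For $\ell>\min\{n_1,n_2\}$ the convention $\tilde\sigma_{i,\ell}=\sigma_i$ turns this into an equality, so \eqref{eq:cond_1} holds for all $\ell>r$.

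Next, conditions \eqref{eq:cond_2} and \eqref{eq:cond_3} would follow at once from finite termination: at $\ell=\min\{n_1,n_2\}$ the decomposition \eqref{svd_Z} is exact, hence $\tilde\sigma_{i,\ell}=\sigma_i$ and $[G_\ell]_1=[\Yck]_1$, while for every larger $\ell$ the definitions \eqref{eq:Wl}--\eqref{eq:cl} prescribe $W^\ell=[\Yck]_1$ and $c^\ell=\sum_{i=r+1}^{\min\{n_1,n_2\}}\sigma_i^2$. Both sequences are therefore eventually constant and equal to their targets $\hat{Y}^{k+1}$ and $\|\hat{Y}^{k+1}-\Yck\|^2$, so the two limits in \eqref{eq:cond_2}--\eqref{eq:cond_3} are immediate.

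I expect the delicate point to be condition \eqref{eq:cond_1}, whose whole strength rests on the one-sided estimate $\tilde\sigma_{i,\ell}\le\sigma_i$: this is exactly what makes $c^\ell$ a genuine lower bound for the optimal rank-$r$ truncation error, and without it the inequality could be violated. A secondary issue concerns the identification of the target in \eqref{eq:cond_3}: since the standing assumption $\tilde\sigma_{r,\ell}>\tilde\sigma_{r+1,\ell}$, evaluated at finite termination where $\tilde\sigma_{r,\ell}=\sigma_r$ and $\tilde\sigma_{r+1,\ell}=\sigma_{r+1}$, forces $\sigma_r>\sigma_{r+1}$, the $r$-truncated SVD of $\Yck$ is uniquely determined and $[\Yck]_1$ is the only projection point, so the eventual constant value of $W^\ell$ necessarily matches the prescribed $\hat{Y}^{k+1}$.
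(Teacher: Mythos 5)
Your proof is correct and follows essentially the same route as the paper's: the one-sided bound $\tilde\sigma_{i,\ell}\le\sigma_i$ from \cite[Theorem 6.4]{saad2011numerical} combined with the finite termination of the Lanczos process yields \eqref{eq:cond_1}--\eqref{eq:cond_2} via the very same chain of inequalities and the eventually-constant limit, and \eqref{eq:cond_3} follows because $W^\ell$ coincides with the exact $r$-truncated SVD for $\ell\ge\min\{n_1,n_2\}$. Your closing observation that the standing assumption $\tilde\sigma_{r,\ell}>\tilde\sigma_{r+1,\ell}$ forces $\sigma_r>\sigma_{r+1}$ and hence uniqueness of the projection point $\hat{Y}^{k+1}=[\Yck]_1$ is a small extra precision that the paper leaves implicit when it simply states that \eqref{eq:cond_3} follows by the definition of $W^\ell$ in \eqref{eq:wl}.
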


\begin{proof}
From \cite[Theorem 6.4]{saad2011numerical}, we have that $\tilde \sigma_{i,\ell} \le \sigma_i$ for all $\ell<\min\{n_1,n_2\}$, and from the finite termination of the method we have $\tilde \sigma_{i,\ell} =\sigma_i$ for $\ell\geq \min\{n_1, n_2\}$. Then, 
\[
c_\ell = \sum_{i=r+1}^{\min\{\ell,n_1,n_2\}} \tilde \sigma_{i,\ell}^2 
       \le \sum_{i=r+1}^{\min\{\ell,n_1,n_2\}} \sigma_{i}^2 
       \le \sum_{i=r+1}^{\min\{n_1, n_2\}} \sigma_{i}^2 
       = \norm{\hat Y^{k+1} - \Yck}^2, \quad \forall \, \ell\in\mathbb{N},
\]
\[
    \lim_{\ell \rightarrow \infty} c_\ell = \lim_{\ell \rightarrow \infty} \sum_{i=r+1}^{\min\{\ell,n_1,n_2\}} \tilde \sigma_{i,\ell}^2     = \sum_{i=r+1}^{\min\{n_1, n_2\}} \sigma_{i}^2 = \norm{\hat Y^{k+1} - \Yck}^2.
\]
Then, \eqref{eq:cond_1} and \eqref{eq:cond_2} holds. {Condition \eqref{eq:cond_3} follows by the definition of $W^{\ell}$ in \eqref{eq:wl}.}
\end{proof}

\begin{lemma}
{Let $\tilde \sigma_{i,\ell}$, $i=1,\ldots,\ell$, denote the singular values of $\cZ_\ell$ and assume that $\tilde\sigma_{r,\ell}  > \tilde\sigma_{r+1,\ell}$.} Then, the sequences given {in \eqref{eq:wl}} and \eqref{eq:al} satisfy condition \eqref{eq:cond_4}.
\end{lemma}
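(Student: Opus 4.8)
The statement bundles two claims: the bound $\norm{W^\ell-\hat{Y}^{k+1}}\le a^\ell$ for all $\ell$, and $\lim_{\ell\to\infty}a^\ell=0$. The plan is to clear away the easy ranges first and then isolate the analytic core. By the finite termination of the Lanczos process, $G_\ell=\Yck$ as soon as $\ell\ge\min\{n_1,n_2\}$, whence $W^\ell=\Tsvd{\Yck}$; this is exactly the fixed element $\hat{Y}^{k+1}$ towards which $\{W^\ell\}$ converges in \eqref{eq:cond_3}. Moreover $a^\ell=0$ once $\ell\ge\min\{n_1,n_2\}$ (by \eqref{eq:al} for $\ell>\min\{n_1,n_2\}$, and because $G_\ell=\Yck$ at $\ell=\min\{n_1,n_2\}$), so the inequality reads $0\le 0$ there and the limit $\lim_{\ell\to\infty}a^\ell=0$ is immediate. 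It therefore remains to prove, for the finitely many indices $r<\ell\le\min\{n_1,n_2\}$, the perturbation estimate
\begin{equation*}
    \norm{\Tsvd{G_\ell}-\Tsvd{\Yck}}\le\kappa_\ell\,\norm{\Yck-G_\ell},
\end{equation*}
which is precisely $\norm{W^\ell-\hat{Y}^{k+1}}\le a^\ell$ on this range.

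The heart of the proof is thus a Lipschitz-type perturbation bound for the $r$-truncated SVD, regarded as the best rank-$r$ approximation map and valid under a spectral gap. I would establish it by holomorphic functional calculus on the symmetric (Jordan--Wielandt) dilations $\mathcal{M}=\left(\begin{smallmatrix}0&\Yck\\(\Yck)^{T}&0\end{smallmatrix}\right)$ and $\tilde{\mathcal{M}}=\left(\begin{smallmatrix}0&G_\ell\\ G_\ell^{T}&0\end{smallmatrix}\right)$, whose eigenvalues are the signed singular values $\pm\sigma_i$ and $\pm\tilde\sigma_{i,\ell}$. Writing $\Pi$ (resp. $\tilde\Pi$) for the Riesz spectral projector of $\mathcal M$ (resp. $\tilde{\mathcal M}$) associated with a contour $\Gamma$ that meets the real axis at the threshold $t_\ell:=(1-\gamma)\tilde\sigma_{r,\ell}+\gamma\tilde\sigma_{r+1,\ell}\in(\tilde\sigma_{r+1,\ell},\tilde\sigma_{r,\ell})$ and encircles the $r$ largest positive eigenvalues of both dilations, one checks that $\Tsvd{M}$ equals twice the off-diagonal block of $\mathcal M\Pi$. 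Subtracting the two representations and using the resolvent identity $(zI-\tilde{\mathcal M})^{-1}-(zI-\mathcal M)^{-1}=(zI-\tilde{\mathcal M})^{-1}(\mathcal M-\tilde{\mathcal M})(zI-\mathcal M)^{-1}$ along $\Gamma$ gives a bound proportional to $\norm{\mathcal M-\tilde{\mathcal M}}=\norm{\Yck-G_\ell}$. The factor $2$ in $\kappa_\ell$ is exactly the one produced by the two off-diagonal blocks of the dilation, while the quotient in \eqref{eq:const_A} comes from estimating $|z|$ (of order $t_\ell$) along $\Gamma$ against its distance $t_\ell-\tilde\sigma_{r+1,\ell}=(1-\gamma)(\tilde\sigma_{r,\ell}-\tilde\sigma_{r+1,\ell})$ to the discarded part of the spectrum, reproducing $\kappa_\ell=\tfrac{2}{1-\gamma}\cdot\tfrac{(1-\gamma)\tilde\sigma_{r,\ell}+\gamma\tilde\sigma_{r+1,\ell}}{\tilde\sigma_{r,\ell}-\tilde\sigma_{r+1,\ell}}$.

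The representation of $\Tsvd{\Yck}$ through $\Pi$ requires $\Gamma$ to separate the spectrum of $\Yck$ as well, i.e. that exactly $r$ singular values of $\Yck$ exceed $t_\ell$; yet only the gap of $G_\ell$ is assumed. Here the two Lanczos facts recalled in the proof of the previous lemma are decisive. Since $\tilde\sigma_{i,\ell}\le\sigma_i$, every retained value satisfies $\sigma_i\ge\tilde\sigma_{r,\ell}>t_\ell$ for $i\le r$, so the top part is always correctly placed; on the discarded side Weyl's inequality yields $\sigma_{r+1}\le\tilde\sigma_{r+1,\ell}+\norm{\Yck-G_\ell}$, so that $\sigma_{r+1}<t_\ell$, and the projector argument applies verbatim, exactly when $\norm{\Yck-G_\ell}<(1-\gamma)(\tilde\sigma_{r,\ell}-\tilde\sigma_{r+1,\ell})$. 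The main obstacle is the complementary regime $\norm{\Yck-G_\ell}\ge(1-\gamma)(\tilde\sigma_{r,\ell}-\tilde\sigma_{r+1,\ell})$, in which the rank-$r$ truncation of $\Yck$ may select a genuinely different subspace and the clean contour is unavailable. There I would argue that $\norm{\Tsvd{G_\ell}-\Tsvd{\Yck}}$ still concentrates at the truncation boundary and is therefore controlled by the singular values of order $t_\ell$ near the cut, rather than by the (possibly far larger) leading singular values, so that the right-hand side $\kappa_\ell\norm{\Yck-G_\ell}\ge 2t_\ell$ dominates it. Making this last estimate meet the very constant $\kappa_\ell$ produced by the contour argument is the delicate point, and is precisely where the convex-combination choice of $t_\ell$ and the factor $\tfrac{2}{1-\gamma}$ pay off.
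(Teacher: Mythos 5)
Your reduction to the indices $r<\ell\le\min\{n_1,n_2\}$ and your treatment of the terminal indices are fine, but the core perturbation bound is not actually proved, and the gap is material. You correctly observe that the Riesz-projector argument needs the contour to separate the spectrum of \emph{both} dilations, which via $\tilde\sigma_{i,\ell}\le\sigma_i$ and Weyl reduces to the smallness condition $\norm{\Yck-\cZ_\ell}<(1-\gamma)(\tilde\sigma_{r,\ell}-\tilde\sigma_{r+1,\ell})$; in the complementary regime you offer only a heuristic. That regime cannot be dismissed: condition \eqref{eq:cond_4} demands $\norm{W^{\ell}-\hat{Y}^{k+1}}\le a^{\ell}$ at \emph{every} $\ell$, and the stopping test \eqref{eq:stopping_final} may fire at an early Lanczos iteration where $\norm{\Yck-\cZ_\ell}$ is still large, so the point the algorithm actually returns lives precisely in the regime you leave open. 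Moreover, the heuristic does not close as stated: in the Frobenius norm (the norm used throughout the paper) the discarded tail $\tsvd{\cZ_\ell}$ can have norm as large as $\sqrt{\ell-r}\,\tilde\sigma_{r+1,\ell}$, which is not dominated by a quantity of order $t_\ell=(1-\gamma)\tilde\sigma_{r,\ell}+\gamma\tilde\sigma_{r+1,\ell}$. Even in the good regime your constant accounting is off: the contour $\Gamma$ must enclose the $r$ largest eigenvalues of the dilations, so $|z|$ along $\Gamma$ reaches values of order $\sigma_1$, not $t_\ell$, and the resulting bound carries a factor of order $\sigma_1/(\tilde\sigma_{r,\ell}-\tilde\sigma_{r+1,\ell})$ rather than $\kappa_\ell$ in \eqref{eq:const_A}; converting operator-norm resolvent estimates into Frobenius-norm estimates introduces further dimension-dependent losses. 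So neither regime of your argument delivers the inequality $\norm{\Tsvd{\cZ_\ell}-\Tsvd{\Yck}}\le\kappa_\ell\norm{\Yck-\cZ_\ell}$ with the stated constant.

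The paper proves this inequality globally, with no smallness condition and no gap assumption on $\Yck$ (only $\tilde\sigma_{r,\ell}>\tilde\sigma_{r+1,\ell}$), by a short variational argument that avoids spectral perturbation theory entirely. The key observation is that, by Eckart--Young, $\Tsvd{\Yck}=\proj_{\setRank}\bigl(\Tsvd{\Yck}+b_1\tsvd{\Yck}\bigr)$ for every $b_1\in[0,\sigma_r/\sigma_{r+1})$ and $\Tsvd{\cZ_\ell}=\proj_{\setRank}\bigl(\Tsvd{\cZ_\ell}+b_2\tsvd{\cZ_\ell}\bigr)$ for every $b_2\in[0,\tilde\sigma_{r,\ell}/\tilde\sigma_{r+1,\ell})$, since rescaling the trailing singular values preserves the singular vectors and the ordering. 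Writing the two best-approximation inequalities (each truncation beats the other candidate rank-$r$ matrix), expanding the squares, summing, and applying Cauchy--Schwarz yields
\begin{equation*}
    \left(1-\frac{1}{2b_1}-\frac{1}{2b_2}\right)\norm{\Tsvd{\Yck}-\Tsvd{\cZ_\ell}}\le\norm{\Yck-\cZ_\ell},
\end{equation*}
a monotonicity-type estimate valid for all perturbation sizes; the choice $b_1=1$ and $b_2=\gamma+(1-\gamma)\tilde\sigma_{r,\ell}/\tilde\sigma_{r+1,\ell}$ makes the left-hand constant exactly $1/\kappa_\ell$, and this is precisely where the convex-combination threshold and the factor $\tfrac{2}{1-\gamma}$ in \eqref{eq:const_A} come from. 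If you want to salvage your plan, this Eckart--Young double-projection inequality is the missing lemma that replaces both your contour argument and your unproven large-perturbation estimate.
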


\begin{proof}
{The limit in \eqref{eq:cond_4} holds by the finite termination of the Lanczos algorithm and the definitions in \eqref{eq:wl} and \eqref{eq:al}. We now show that the inequality in \eqref{eq:cond_4} holds. We limit ourselves to the case $\ell < \min\{n_1,n_2\}$, as otherwise the thesis holds by \eqref{eq:wl} and \eqref{eq:al}}. For simplicity, we omit the \ste{index $k$ and let $m=\min\{n_1,n_2\}$}. Using the notation given in \eqref{eq:svd_part}, we consider the block partitions
\begin{equation}
    Y_{reg} = \Tsvd{Y_{reg}} + \tsvd{Y_{reg}}, \quad \cZ_\ell      = {\Tsvd{\cZ_\ell}} + \tsvd{\cZ_\ell}.\label{Z_part}
\end{equation}
Then, we have that $ \hat Y^{k+1}=\Tsvd{Y_{reg}}$ and $ W^\ell=\Tsvd{\cZ_\ell}$.
The SVD of $Y_{reg}$ and $\cZ_\ell$ can be written as
\ste{
\begin{equation*}
    Y_{reg} = U_{Y_{reg}} \diag(\sigma_1, \dots, \sigma_{m}) V_{Y_{reg}}^T, \quad \cZ_\ell     = U_{\cZ_\ell}       \diag(\tilde\sigma_{1,\ell}, \dots,\tilde\sigma_{\ell,\ell}, 0,\ldots,0)        V_{\cZ_\ell}^T.
\end{equation*}}
Given $b_1\ge 0$ and $b_2\ge 0$, we observe that the SVD  of $\Tsvd{Y_{reg}} + b_1 \tsvd{Y_{reg}}$ and $\Tsvd{\cZ_\ell} + b_2 \tsvd{\cZ_\ell}$ are \ste{
\begin{align*}
    \Tsvd{Y_{reg}} + b_1 \tsvd{Y_{reg}} &= U_{Y_{reg}} \diag(\sigma_1, \dots, \sigma_r,b_1 \sigma_{r+1}, \dots, b_1 \sigma_m) V_{Y_{reg}}^T \\
    \Tsvd{\cZ_\ell} + b_2 \tsvd{\cZ_\ell}           &= U_{\cZ_\ell}        \diag(\tilde\sigma_{1,\ell}, \dots, \tilde\sigma_{r,\ell}, b_2 \tilde\sigma_{r+1,\ell}, \dots, b_2 \tilde\sigma_{\ell,\ell},0,\dots,0)           V_{\cZ_\ell}^T.
\end{align*}}
Then, we have:
\begin{align}
    \Tsvd{Y_{reg}} &= \proj_\setRank (\Tsvd{Y_{reg}} + b_1 \tsvd{Y_{reg}}), \quad \forall \ b_1 \in \ste{[0, \sigma_r/\sigma_{r+1}) }\label{eq:eckart1}\\
    \Tsvd{\cZ_\ell}      &= \proj_\setRank (\Tsvd{\cZ_\ell} + b_2 \tsvd{\cZ_\ell}),           \quad \forall\ste{ \ b_2 \in [0, \tilde\sigma_{r,\ell}/\tilde\sigma_{r+1,\ell})}.\label{eq:eckart2}
\end{align}
Equation \eqref{eq:eckart1} yields
$$
    \|\Tsvd{Y_{reg}}  - (\Tsvd{Y_{reg}}  + b_1 \tsvd{Y_{reg}})\|^2  \le \|\Tsvd{\cZ_\ell} -(\Tsvd{Y_{reg}}  + b_1 \tsvd{Y_{reg}}) \|^2.
$$
The above inequality implies
$$
    \|\Tsvd{\cZ_\ell} - \Tsvd{Y_{reg}} \|^2 + \|b_1 \tsvd{Y_{reg}}\|^2 - 2 \langle \Tsvd{\cZ_\ell} - \Tsvd{Y_{reg}}, b_1 \tsvd{Y_{reg}} \rangle \ge 
    \|b_1 \tsvd{Y_{reg}}\|^2,
$$
which yields     
\begin{equation}\label{eq:1}
    \frac{1}{2b_1} \|\Tsvd{\cZ_\ell} - \Tsvd{Y_{reg}}\|^2 -\langle \Tsvd{\cZ_\ell} - \Tsvd{Y_{reg}}, \tsvd{Y_{reg}} \rangle \ge 0.  
\end{equation}
\REV{Using  \eqref{eq:eckart2} and following a similar reasoning as before, we get}
\begin{equation}
     \frac{1}{2b_2} \|\Tsvd{Y_{reg}} - \Tsvd{\cZ_\ell} \|^2 - \langle \Tsvd{Y_{reg}} - \Tsvd{\cZ_\ell}, \tsvd{\cZ_\ell} \rangle\ge 0. \label{eq:2}
\end{equation}
By summing up \eqref{eq:1} and \eqref{eq:2}, we obtain:
\begin{equation}
     0 \le \left( \frac{1}{2b_1} + \frac{1}{2b_2} \right) \|\Tsvd{Y_{reg}} - \Tsvd{\cZ_\ell}\|^2 - \langle \Tsvd{\cZ_\ell} - \Tsvd{Y_{reg}}, \tsvd{Y_{reg}} - \tsvd{\cZ_\ell} \rangle. \label{eq:3}
\end{equation}
Recalling that $\tsvd{Y_{reg}} = Y_{reg}-\Tsvd{Y_{reg}}$ and $\tsvd{\cZ_\ell} = \cZ_\ell - \Tsvd{\cZ_\ell}$, we have:
$$
    -\left( \frac{1}{2b_1} + \frac{1}{2b_2} \right) \|\Tsvd{Y_{reg}} - \Tsvd{\cZ_\ell} \|^2 \le \langle \Tsvd{Y_{reg}} -\Tsvd{\cZ_\ell} , Y_{reg}-  \cZ_\ell \rangle -  \|\Tsvd{Y_{reg}} - \Tsvd{\cZ_\ell}\|^2, 
$$
which implies
$$
    \left( 1 - \frac{1}{2b_1} - \frac{1}{2b_2} \right) \|\Tsvd{Y_{reg}} - \Tsvd{\cZ_\ell}\|^2 \le \langle \Tsvd{Y_{reg}}-\Tsvd{\cZ_\ell} , Y_{reg} - \cZ_\ell \rangle.
$$
By limiting ourselves to the case where $\left( 1 - \frac{1}{2b_1} - \frac{1}{2b_2} \right) > 0$, which is verified by imposing $b_1, b_2 \ge 1$ (where equality cannot hold simultaneously for both), and using Cauchy-Schwarz inequality, we have:
\begin{align}
    \left( 1 - \frac{1}{2b_1} - \frac{1}{2b_2} \right) \|\Tsvd{Y_{reg}} - \Tsvd{\cZ_\ell}\| &\le \|Y_{reg} - \cZ_\ell\|. \label{eq:4}
\end{align}
Let us set $b_1 = 1$ and \ste{ $b_2 = \gamma + (1-\gamma) \tilde\sigma_{r,\ell}/\tilde\sigma_{r+1,\ell} = \frac{(1 -\gamma) \tilde\sigma_{r,\ell}+\gamma \tilde\sigma_{r+1,\ell}}{\tilde\sigma_{r+1,\ell}}$}, with $\gamma \in (0,1)$. With this choice of $b_1$ and $b_2$, the constant in $\eqref{eq:4}$ becomes:
\begin{align*}
    \left( 1 - \frac{1}{2b_1} - \frac{1}{2b_2} \right) &= \left( 1 - \frac{1}{2} - \ste{\frac{\tilde\sigma_{r+1,\ell}}{2((1-\gamma)\tilde\sigma_{r,\ell}+ \gamma \tilde\sigma_{r+1,\ell})}} \right) \\
    &= \frac{1}{2} \left( 1 - \ste{\frac{\tilde\sigma_{r+1,\ell}}{(1-\gamma)\tilde\sigma_{r,\ell}+ \gamma \tilde\sigma_{r+1,\ell}}} \right)
    = \frac{(1-\gamma)}{2} \cdot\ste{\frac{\tilde \sigma_{r,\ell}-\tilde\sigma_{r+1,\ell}}{(1-\gamma)\tilde\sigma_{r,\ell}+\gamma\tilde\sigma_{r+1,\ell}}} = \frac{1}{\kappa_\ell},
\end{align*}
where $\kappa_\ell$ is given in \eqref{eq:const_A}.
Then,  from \eqref{eq:4}, we have:
\[
    \REV{\|\Tsvd{Y_{reg}} - \Tsvd{\cZ_\ell}\| \leq  \kappa_{\ell} \|Y_{reg} - \cZ_\ell\| = a^\ell,}
\]
and the first part of \eqref{eq:cond_4} is verified.
\end{proof}

\subsubsection{Computation of the right-hand side of the stopping criteria \eqref{eq:stopping_final}}

The criteria \eqref{eq:stopping_final} require to compute additional quantities, in particular the Frobenius norms $\| W^\ell - \Yck \|$ and $\| \cZ_\ell - \Yck \|$. We show that these two quantities can be computed without additional cost as a byproduct of the Lanczos process.

We start recalling few useful relations. By \eqref{eq:Lanczos_recurrence_matrix_1} it follows $P_{\ell+1}^T Y_{reg}^k Q_{\ell} = B_{\ell}$ and by \eqref{svd_Z} we have $\cZ_\ell = P_{\ell+1} B_\ell Q_\ell^T$. Moreover, since the columns of $P_{\ell + 1}$ and $Q_{\ell}$ are orthonormal, it holds:
\begin{align}
	p_{i}^T p_{j} &= 0, \quad 1 \le i,j \le \ell+1, \ i \neq j, \label{eq:orth_p} \\
	q_{i}^T q_{j} &= 0, \quad 1 \le i,j \le \ell, \ i \neq j, \label{eq:orth_q} \\
	(I - P_{\ell+1} P_{\ell+1}^T)p_{i}&=0,
    \quad 1 \le i \le \ell+1. 
    \label{eq:orth_I_PP}
\end{align}
Inspired by \cite{Simon2000}, in the next Lemma we show that $\| \Yck-\cZ_\ell \| ^2$ can be computed by recurrence exploiting the entries of $B_{\ell}$. 

\begin{lemma} \label{lem:criterion1}
	Let us denote $\omega_\ell = \| \Yck-\cZ_\ell \| ^2$ and let $\alpha_\ell, \ \beta_{\ell+1}$ be given in \eqref{eq:Lanczos_recurrence_vector_1}-\eqref{eq:Lanczos_recurrence_vector_2}. Then,
	\[
	\omega_{\ell-1} = \omega_{\ell} + \alpha_{\ell}^2 + \beta_{\ell+1}^2.
	\]
\end{lemma}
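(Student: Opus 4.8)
The plan is to use the matrix-form recurrences of the Lanczos process to reinterpret $\cZ_\ell$ as an orthogonal projection of $\Yck$, and then to track exactly how the residual changes when passing from step $\ell-1$ to step $\ell$. First I would combine the definition \eqref{svd_Z} with the recurrence \eqref{eq:Lanczos_recurrence_matrix_1} to rewrite $\cZ_\ell = P_{\ell+1} B_\ell Q_\ell^T = \Yck Q_\ell Q_\ell^T$, so that $\cZ_\ell$ is the projection of $\Yck$ (acting from the right) onto the column space of $Q_\ell$ and the residual is $\Yck - \cZ_\ell = \Yck(I - Q_\ell Q_\ell^T)$. Writing $Q_\ell = [Q_{\ell-1}, q_\ell]$ and invoking the orthonormality \eqref{eq:orth_q}, the projector splits as $Q_\ell Q_\ell^T = Q_{\ell-1} Q_{\ell-1}^T + q_\ell q_\ell^T$, whence $\cZ_\ell = \cZ_{\ell-1} + \Yck q_\ell q_\ell^T$.

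The core step is then the expansion
\begin{equation*}
\omega_\ell = \norm{(\Yck - \cZ_{\ell-1}) - \Yck q_\ell q_\ell^T}^2 = \omega_{\ell-1} - 2\langle \Yck - \cZ_{\ell-1}, \Yck q_\ell q_\ell^T\rangle + \norm{\Yck q_\ell q_\ell^T}^2 .
\end{equation*}
For the last term, the vector recurrence \eqref{eq:Lanczos_recurrence_vector_2} gives $\Yck q_\ell = \alpha_\ell p_\ell + \beta_{\ell+1} p_{\ell+1}$, so that the orthonormality \eqref{eq:orth_p} yields $\norm{\Yck q_\ell q_\ell^T}^2 = \norm{\Yck q_\ell}^2 = \alpha_\ell^2 + \beta_{\ell+1}^2$. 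For the cross term, a cyclic-trace manipulation reduces it to $q_\ell^T(\Yck - \cZ_{\ell-1})^T \Yck q_\ell$; here the contribution of $\cZ_{\ell-1} = \Yck Q_{\ell-1} Q_{\ell-1}^T$ drops out precisely because $q_\ell^T Q_{\ell-1} = 0$ by \eqref{eq:orth_q}, leaving $q_\ell^T (\Yck)^T \Yck q_\ell = \alpha_\ell^2 + \beta_{\ell+1}^2$. Collecting terms gives $\omega_\ell = \omega_{\ell-1} - (\alpha_\ell^2 + \beta_{\ell+1}^2)$, which is the claimed recurrence.

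The main obstacle I anticipate is the bookkeeping in the cross term: one must resist the temptation to declare $\cZ_{\ell-1}$ and $\Yck q_\ell q_\ell^T$ orthogonal (they are not in general), and instead evaluate the inner product explicitly, the key being that only the orthogonality $q_\ell \perp Q_{\ell-1}$ makes the $\cZ_{\ell-1}$ part vanish. An equivalent and slightly cleaner route that sidesteps the cross term altogether is the global identity $\omega_\ell = \norm{\Yck}^2 - \norm{B_\ell}^2$: this follows from the Pythagorean theorem for the orthogonal projection (the orthogonality $\langle \cZ_\ell, \Yck - \cZ_\ell\rangle = 0$ holds after a cyclic-trace argument using $Q_\ell^T Q_\ell = I$) together with $\norm{\cZ_\ell}^2 = \norm{\Yck Q_\ell}^2 = \norm{B_\ell}^2$. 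The recurrence is then immediate, since $B_\ell$ contains $B_{\ell-1}$ as a leading submatrix and differs from it only by the two extra entries $\alpha_\ell$ and $\beta_{\ell+1}$, so that $\norm{B_\ell}^2 = \norm{B_{\ell-1}}^2 + \alpha_\ell^2 + \beta_{\ell+1}^2$.
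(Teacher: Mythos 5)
Your proof is correct, but it takes a genuinely different route from the paper. The paper starts from the second matrix recurrence \eqref{eq:Lanczos_recurrence_matrix_2} and represents the residual through the \emph{left} projector plus a rank-one correction, $\Yck-\cZ_\ell=(I-P_{\ell+1}P_{\ell+1}^T)\Yck+\alpha_{\ell+1}p_{\ell+1}q_{\ell+1}^T$; it then expands $\omega_{\ell-1}$ at level $\ell$, uses the vector recurrence \eqref{eq:Lanczos_recurrence_vector_1} to rewrite $p_{\ell+1}p_{\ell+1}^T\Yck$, and kills all cross terms via \eqref{eq:orth_p}, \eqref{eq:orth_q}, \eqref{eq:orth_I_PP}. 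You instead use the first recurrence \eqref{eq:Lanczos_recurrence_matrix_1} to get the \emph{right}-projector identity $\cZ_\ell=\Yck Q_\ell Q_\ell^T$, which is exact with no correction term, split the projector as $Q_{\ell-1}Q_{\ell-1}^T+q_\ell q_\ell^T$, and evaluate the one surviving cross term explicitly. Your representation is arguably cleaner (the paper must carry the $\alpha_{\ell+1}p_{\ell+1}q_{\ell+1}^T$ term through the whole computation), and your Pythagorean variant $\omega_\ell=\norm{\Yck}^2-\norm{B_\ell}^2$ is even more economical: it makes the recurrence immediate from $\norm{B_\ell}^2=\norm{B_{\ell-1}}^2+\alpha_\ell^2+\beta_{\ell+1}^2$ and directly yields, after telescoping, the closed form the paper only assembles later in \eqref{eq:closed_form_norm_Z_Y}. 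The paper's route, on the other hand, exercises exactly the orthogonality relations \eqref{eq:orth_p}--\eqref{eq:orth_I_PP} that it reuses verbatim in the proof of the companion Lemma on $\norm{\Yck-W^\ell}^2$, so the two proofs share their machinery.

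One small slip in your commentary, though it does not affect the argument: the parenthetical claim that $\cZ_{\ell-1}$ and $\Yck q_\ell q_\ell^T$ ``are not in general'' orthogonal is wrong --- they \emph{are} Frobenius-orthogonal, since $\langle \cZ_{\ell-1},\Yck q_\ell q_\ell^T\rangle = q_\ell^T Q_{\ell-1}Q_{\ell-1}^T\Yck^T\Yck q_\ell=0$ by the very identity $q_\ell^TQ_{\ell-1}=0$ you invoke. What fails to be orthogonal is the \emph{residual} $\Yck-\cZ_{\ell-1}$ against the increment $\Yck q_\ell q_\ell^T$: their inner product is $q_\ell^T\Yck^T\Yck q_\ell=\alpha_\ell^2+\beta_{\ell+1}^2\neq 0$, which is precisely why the cross term survives and flips $+(\alpha_\ell^2+\beta_{\ell+1}^2)$ into $-(\alpha_\ell^2+\beta_{\ell+1}^2)$. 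Since you compute the inner product explicitly rather than relying on the misstated orthogonality, the proof stands as written.
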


\begin{proof}
For the sake of simplicity, we omit the index $k$. From $\eqref{eq:Lanczos_recurrence_matrix_2}$ we have:
\begin{align}
    Y_{reg} - G_\ell = Y_{reg} - P_{\ell+1} B_\ell Q_\ell^T &= Y_{reg} - P_{\ell+1} P_{\ell+1}^T Y_{reg}+ \alpha_{\ell+1} P_{\ell+1} e_{\ell+1} q_{\ell+1}^T \nonumber\\
    &= (I - P_{\ell+1} P_{\ell+1}^T) Y_{reg} + \alpha_{\ell+1} p_{\ell+1} q_{\ell+1}^T. \label{eq:A-Z}
\end{align}
We observe that:
\begin{equation}
    \label{eq:1:bis}
    (I - P_{\ell+1} P_{\ell+1}^T) Y_{reg} = (I - P_{\ell} P_{\ell}^T) Y_{reg} - p_{\ell+1} p_{\ell+1}^T Y_{reg}.
\end{equation}
Then, \eqref{eq:A-Z}, \eqref{eq:1:bis} and \eqref{eq:Lanczos_recurrence_vector_1}, yield:
\begin{align*}
    \omega_{\ell-1} &= \| Y_{reg}-G_{\ell-1} \| ^2 = \| (I - P_{\ell} P_{\ell}^T) Y_{reg} +\alpha_{\ell} p_{\ell} q_{\ell}^T \| ^2 \\
    &= \| (I - P_{\ell+1} P_{\ell+1}^T) Y_{reg} + {p_{\ell+1} p_{\ell+1}^T Y_{reg}} + \alpha_{\ell} p_{\ell} q_{\ell}^T \| ^2 \\
    &= \| (I - P_{\ell+1} P_{\ell+1}^T) Y_{reg} + {\alpha_{\ell+1} p_{\ell+1} q_{\ell+1}^T + \beta_{\ell+1} p_{\ell+1} q_{\ell}^T} + \alpha_{\ell} p_{\ell} q_{\ell}^T \| ^2 \\
    &= {\omega_{\ell}}  + \| \beta_{\ell+1} p_{\ell+1} q_{\ell}^T + \alpha_{\ell} p_{\ell} q_{\ell}^T \| ^2 + 2 \langle (I - P_{\ell+1} P_{\ell+1}^T) Y_{reg} + \alpha_{\ell+1} p_{\ell+1} q_{\ell+1}^T, \beta_{\ell+1} p_{\ell+1} q_{\ell}^T + \alpha_{\ell} p_{\ell} q_{\ell}^T \rangle.
\end{align*}
By using \eqref{eq:orth_p}, \eqref{eq:orth_q}, and \eqref{eq:orth_I_PP}, it follows that the Frobenius inner product is null. In addition, both $p_{\ell+1} q_{\ell}^T$ and $p_{\ell} q_{\ell}^T$ are rank-one matrices and the vectors  $p_{\ell+1},p_{\ell}$ and $q_{\ell}$ are unitary vectors, then $\|p_{\ell+1} q_{\ell}^T\| =1$, $\|p_{\ell} q_{\ell}^T\| =1$ and $\| \beta_{\ell+1} p_{\ell+1} q_{\ell}^T + \alpha_{\ell} p_{\ell} q_{\ell}^T \| ^2 = \alpha_{\ell}^2 + \beta_{\ell+1}^2$ and the thesis follows.
\end{proof}

The following Lemma shows the relationship between $\| \Yck - W^\ell \| ^2$ and $\|\Yck-\cZ_\ell \| ^2$.
\begin{lemma}\label{lem:criterion2}
    Let $\tilde \sigma_{i,\ell}$, $i=1,\ldots,\ell$, denote the singular values of $\cZ_\ell$. Then we have
	\[
        \| \Yck - W^\ell \| ^2 = \| \Yck-\cZ_\ell \| ^2 + \sum_{i=r+1}^\ell \tilde \sigma_{i,\ell}^2.
	\]
\end{lemma}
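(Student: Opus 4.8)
The plan is to decompose the error relative to the truncated matrix $W^\ell$ into the error relative to the full Lanczos approximation $\cZ_\ell$ plus the discarded tail, and then to show that these two contributions are orthogonal in the Frobenius inner product. First, I would omit the index $k$ and recall that, as observed in the proof of the preceding lemma, $W^\ell = \Tsvd{\cZ_\ell}$, so that $\cZ_\ell - W^\ell = \tsvd{\cZ_\ell}$. Writing $Y_{reg}-W^\ell = (Y_{reg}-\cZ_\ell) + \tsvd{\cZ_\ell}$ and expanding the squared Frobenius norm gives
\[
\|Y_{reg}-W^\ell\|^2 = \|Y_{reg}-\cZ_\ell\|^2 + \|\tsvd{\cZ_\ell}\|^2 + 2\langle Y_{reg}-\cZ_\ell,\,\tsvd{\cZ_\ell}\rangle,
\]
so that the thesis reduces to proving that the cross term vanishes and that $\|\tsvd{\cZ_\ell}\|^2 = \sum_{i=r+1}^\ell \tilde\sigma_{i,\ell}^2$.

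The core of the argument is the vanishing of the inner product, which I expect to be the main obstacle. Here I would invoke \eqref{eq:A-Z}, namely $Y_{reg}-\cZ_\ell = (I-P_{\ell+1}P_{\ell+1}^T)Y_{reg} + \alpha_{\ell+1}p_{\ell+1}q_{\ell+1}^T$, together with $\tsvd{\cZ_\ell} = P_{\ell+1}[B_\ell]_2 Q_\ell^T$. For the first summand, since $(I-P_{\ell+1}P_{\ell+1}^T)P_{\ell+1}=0$ by orthonormality of the columns of $P_{\ell+1}$ (cf. \eqref{eq:orth_p} and \eqref{eq:orth_I_PP}), the term $\langle (I-P_{\ell+1}P_{\ell+1}^T)Y_{reg},\, P_{\ell+1}[B_\ell]_2 Q_\ell^T\rangle = \operatorname{tr}\bigl(Y_{reg}^T(I-P_{\ell+1}P_{\ell+1}^T)P_{\ell+1}[B_\ell]_2 Q_\ell^T\bigr)$ is zero. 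For the second summand, I would use the cyclic property of the trace to write $\langle \alpha_{\ell+1}p_{\ell+1}q_{\ell+1}^T,\, P_{\ell+1}[B_\ell]_2 Q_\ell^T\rangle = \alpha_{\ell+1}\operatorname{tr}\bigl(p_{\ell+1}^T P_{\ell+1}[B_\ell]_2 Q_\ell^T q_{\ell+1}\bigr)$ and observe that $Q_\ell^T q_{\ell+1}=0$, because the residual Lanczos vector $q_{\ell+1}$ is orthogonal to the columns $q_1,\dots,q_\ell$ of $Q_\ell$; hence this contribution vanishes as well. The subtle point is precisely the treatment of this rank-one residual piece, which forces one to track the $(\ell+1)$-th right Lanczos vector rather than relying solely on the orthogonality relations \eqref{eq:orth_p}--\eqref{eq:orth_I_PP} already used in Lemma~\ref{lem:criterion1}.

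Finally, to evaluate the tail I would exploit that $P_{\ell+1}$ and $Q_\ell$ have orthonormal columns, so that $P_{\ell+1}^TP_{\ell+1}=I$ and $Q_\ell^TQ_\ell=I$ render the Frobenius norm invariant under the two-sided multiplication, giving $\|\tsvd{\cZ_\ell}\|^2 = \|P_{\ell+1}[B_\ell]_2 Q_\ell^T\|^2 = \|[B_\ell]_2\|^2$. Since $[B_\ell]_2$ collects the singular values $\tilde\sigma_{r+1,\ell},\dots,\tilde\sigma_{\ell,\ell}$ of $B_\ell$, which coincide with the first $\ell$ singular values of $\cZ_\ell$, this equals $\sum_{i=r+1}^\ell \tilde\sigma_{i,\ell}^2$. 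Combining the vanishing of the cross term with this computation yields the claimed identity.
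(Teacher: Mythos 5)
Your proposal is correct and follows essentially the same route as the paper's proof: the identical decomposition $Y_{reg}-W^\ell = (Y_{reg}-\cZ_\ell)+\tsvd{\cZ_\ell}$, the same use of \eqref{eq:A-Z} together with $(I-P_{\ell+1}P_{\ell+1}^T)P_{\ell+1}=0$ and $Q_\ell^T q_{\ell+1}=0$ (via the cyclic property of the trace) to annihilate the cross term, and the same evaluation of the tail. Your explicit remark that $\|\tsvd{\cZ_\ell}\|^2=\|[B_\ell]_2\|^2$ by invariance of the Frobenius norm under multiplication by the orthonormal factors $P_{\ell+1}$ and $Q_\ell$ merely spells out what the paper condenses into an appeal to the definition of the Frobenius norm.
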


\begin{proof}
We omit the iteration index $k$, for the sake of simplicity. We recall that \REV{$W^\ell=[\cZ_\ell]_1$, in addition \eqref{Z_part} holds and $[\cZ_\ell]_{2}=P_{\ell+1} [B_\ell]_{2} Q_\ell^T$, where  $[B_\ell]_2=(U_B)_{2} (\Sigma_B)_{2} (V_B)_{2}$.
Then:
\begin{align*}
    \| Y_{reg}-W^\ell \| ^2& = \| Y_{reg}- [\cZ_\ell]_1 \| ^2 
    = \| Y_{reg}- \cZ_\ell+[\cZ_\ell]_{2} \| ^2\\
    &\ste{= \| Y_{reg}- \cZ_\ell \|^2 + \| [\cZ_\ell]_{2} \| ^2 + 2 \langle Y_{reg}- \cZ_\ell, P_{\ell+1} [B_\ell]_{2} Q_\ell^T \rangle} .
\end{align*}}
\REV{We can show that the Frobenius inner product is null.} Indeed, by rewriting $Y_{reg}- \cZ_\ell$ as in \eqref{eq:A-Z}, we have:
\begin{align}
    \langle Y_{reg}- \cZ_\ell, P_{\ell+1} [B_\ell]_{2} Q_\ell^T \rangle  &= 
    \langle (I - P_{\ell+1} P_{\ell+1}^T) Y_{reg}  + \alpha_{\ell+1} p_{\ell+1} q_{\ell+1}^T, P_{\ell+1} [B_\ell]_{2} Q_\ell^T \rangle  \nonumber\\
    & = \langle (I - P_{\ell+1} P_{\ell+1}^T) Y_{reg}, P_{\ell+1} [B_\ell]_{2} Q_\ell^T \rangle + \alpha_{\ell+1}\langle p_{\ell+1} q_{\ell+1}^T, P_{\ell+1} [B_\ell]_{2} Q_\ell^T \rangle  \nonumber\\
    &= \textrm{tr} \ste{\left( Y_{reg}^T (I - P_{\ell+1} P_{\ell+1}^T) P_{\ell+1} [B_\ell]_{2} Q_\ell^T \right) + \alpha_{\ell+1} \textrm{tr} \left( q_{\ell+1} p_{\ell+1}^T P_{\ell+1} [B_\ell]_{2} Q_\ell^T \right)}. \label{eq:scaler_prod_trace}
\end{align}
The first term in \eqref{eq:scaler_prod_trace} is null by \eqref{eq:orth_I_PP}, while the second, for the properties of the trace, becomes $\textrm{tr} \left( Q_\ell^T q_{\ell+1} p_{\ell+1}^T P_{\ell+1} [B_\ell]_{2} \right)$ and then it is null for \eqref{eq:orth_q}.
On the other hand, by the definition of Frobenius norm, we have $\| \tsvd{\cZ_\ell} \| ^2 = \sum_{i=r+1}^\ell \tilde \sigma_{i,\ell}^2$. Then, the thesis follows.
\end{proof}

%%%%%%%%%%%%%%%%%%%%%%%%%%
% Numerical Illustration %
%%%%%%%%%%%%%%%%%%%%%%%%%%
\section{Numerical illustration}

In this section, we  provide numerical tests which illustrate the behaviour of the iRAPM procedure. We focused on the Matrix Completion (MC) problem, which consists in recovering a matrix $M\in\R^{n_1\times n_2}$ from the knowledge of a subset of its entries; namely, the entries with indexes in the index set $\Omega = \{(i,j): \ M_{i,j} \text{ is known}\}$. In the following, we will write $(M)_\Omega\in\R^{n_1\times n_2}$ to denote the matrix obtained by keeping the entries of $M$ with indices in $\Omega$ and setting to 0 the others, and analogously for $(M)_{\bar \Omega}\in\R^{n_1\times n_2}$ with $\bar \Omega$ the complement of $\Omega$. Without further constraints, there exists an infinite number of matrices $Z\in\mathbb{R}^{n_1\times n_2}$ that comply with the condition $Z_{i,j} = M_{i,j}$ for all $(i,j)\in\Omega$. On the other hand, in most applications, one can assume that the matrix to be recovered has low rank, for more details, see \cite{candes2012exact}. Problems that can be approached with MC arise in various fields, such as recommendation systems \cite{ramlatchan2018survey}, image reconstruction \cite{nguyen2019low}, and data imputation \cite{agarwal2018model, Silei_energies}.

Then, the MC problem is generally formulated as the affine rank minimization problem
\begin{equation}
	\label{eq:min_rank}
	\begin{aligned}
		\min \quad & \textrm{rank}(Z) \\    
		\textrm{s.t.} \quad & (Z)_\Omega = (M)_\Omega
	\end{aligned} .
\end{equation}
This is a combinatorial optimization problem that requires a worst-case exponential running time.

In the literature, problem \eqref{eq:min_rank} has been approached in different ways, for example, replacing the rank function with a convex function like the nuclear norm \cite{candes2012exact}, that is, the sum of the singular values of the matrix, or reformulating it as Semidefinite Programming Problem \cite{Bellavia21}. 

We are interested in reformulating the MC problem as the feasibility problem \eqref{eq:feasibility}. In fact, given an estimate $r$ of the rank of the sought matrix, we can reformulate it as the problem of finding a matrix in the intersection of the {rank level set} and the set of matrices that match the observed entries, i.e.:
\begin{equation}\label{eq:completion}
	\text{find }Z\in\R^{n_1\times n_2}\text{ such that }Z\in \setNorm\cap \setRank, 
\end{equation}
where
\begin{align}\label{eq:constraints_MC}
	\setNorm &= \{Z\in \mathbb{R}^{n_1\times n_2}: \ Z_{i,j} = M_{i,j}, \ \forall \ (i,j)\in\Omega\},\\
	\setRank &= \{Z\in\mathbb{R}^{n_1\times n_2}: \rank{Z}\leq r\}.
\end{align}
As the function that associates $M$ with $(M)_\Omega$ is a linear map, $\mathcal{C} $ is an affine set and the projection onto it is given by
\begin{equation}
	\projNorm (Z) = (M)_\Omega + (Z)_{\bar\Omega}.
\end{equation}

{The inexact projection onto $\setRank$ in iRAPM is computed  via the Lanczos method described in Section~\ref{sec:Lanczos}, equipped with the stopping criteria \eqref{eq:stopping_final}, while in APM and RAPM we use the standard stopping criterion based on the bound \eqref{eq:Lanczos_error_bound_v2}. We note that also in APM and RAPM the projection is not exact; however, we expect it to be more accurate than in iRAPM, more expensive and not justified to preserve the overall convergence behaviour.}

{For all the three methods under investigation, our implementation of the Lanczos procedure is based on the PROPACK code by Larsen \cite{larsen1998lanczos}, where we changed the stopping criterion when used in iRAPM. In APM and RAPM we used the standard stopping criterion and default tolerances, that is, $16 \varepsilon_m$, where $\varepsilon_m$ denotes the machine precision}. 

\subsection{\REV{Projection} operators: computational costs}

Before presenting the numerical results, we analyze the computational costs of \REV{the projection operators involved in the MC problem \eqref{eq:completion}.}

As observed above, \REV{projecting} onto $\setNorm$ consists in an element-wise replacement and can be computed in $n_1n_2$ operations. On the other hand, the projection onto $\setRank$ via the Lanczos method described in Section~\ref{sec:Lanczos} has two main parts that concur to the final cost: updating the Lanczos vectors \eqref{eq:Lanczos_recurrence_vector_1}-\eqref{eq:Lanczos_recurrence_vector_2}, and checking the criterion \eqref{eq:Lanczos_error_bound_v2} for APM and RAPM (the criteria \eqref{eq:stopping_final} for iRAPM, respectively).

The Lanczos procedure requires at each iteration two matrix-vector products, that is, $O(n_1n_2)$ operations. Then, assuming that the final Krylov subspace dimension at the $k$-th outer iteration is $\bar \ell_k$, the final cost for building the Krylov subspace is of the order $O(n_1n_2\bar\ell_k)$.

The standard criterion  \eqref{eq:Lanczos_error_bound_v2}, employed in APM and RAPM, requires to check $r$ inequalities at most, then it costs $O(r)$ each time that the stopping criterion is checked. The evaluation of the  two Frobenius norms required in the criteria \eqref{eq:stopping_final} in the iRAPM procedure can be significantly reduced exploiting Lemma~\ref{lem:criterion1} and~\ref{lem:criterion2}. In fact, the two norms can be computed as follows:
\ste{
\begin{equation}
 \label{eq:closed_form_norm_Z_Y}
\begin{array}{l}
	\| \Yck - \cZ_\ell \| ^2 = \| \Yck {-\cZ_0}\| ^2 - \sum_{i = 1}^{\ell} \alpha_i^2 - \sum_{j = 2}^{\ell+1} \beta_j^2, \\  \| \Yck - W^\ell \| ^2 = \| \Yck-\cZ_\ell \| ^2 + \sum_{i=r+1}^\ell \tilde{\sigma}_{i,\ell}^2.
\end{array}
\end{equation}}
where $\cZ_\ell$ is given in \eqref{svd_Z}. Then, using \eqref{eq:closed_form_norm_Z_Y}, the inexact criteria require to compute the Frobenius norm $\| \Yck - \cZ_0\| $, whose cost is $O(n_1n_2)$, only once at the beginning of the Lanczos procedure, \REV{$3\bar\ell_k-r$ sums} and the check of only 2 inequalities. 
Then, in all the alternating projection approaches under investigation, the cost of checking the stopping criteria is dominated by the cost of the Lanczos iterations, and we can conclude that, at a generic iteration $k$, the projection onto $\setRank$ costs $O(n_1n_2\bar\ell_k)$.

We stress that the strategy for increasing the dimension of the Krylov subspace has a crucial impact on the final cost of the projection onto $\setRank$. {The PROPACK code implements an adaptive strategy to reduce the number of criteria checks.} This strategy can result in selecting unnecessarily large values of $\bar\ell_k$. In our implementation of the three alternating projection methods, we chose to check the stopping criteria at each Lanczos iteration {in order to select the minimum Krylov dimension for which the criteria are satisfied.}

\subsection{Experiments on Matrix Completion}

For our experiments, we consider two different types of matrices: random Gaussian matrices of rank $r$ obtained as $M = LR^T \in \R^{n_1 \times n_2}$, where $L \in \R^{n_1 \times r}$, $R \in \R^{n_2 \times r}$ are matrices whose entries are sampled from a normal distribution, and the $r$-truncated SVD of the  {grayscale}  $512 \times 512$ {\tt lake} image, see  
Figure~\ref{fig:lake_and_val_sing} (left). 
We choose $n_1=n_2=512$ for the Gaussian matrices, and in both cases  we choose $r=30$. Then, we built the Gaussian matrices and truncated the image accordingly.

{The Gaussian matrices and the truncated image have different  singular value distributions as illustrated in  Figure~\ref{fig:lake_and_val_sing} (right), where we clearly see that the singular values of the truncated image decade faster than those of the Gaussian matrix. 
Then, this choice of the matrices allows us to  test  iRAPM against  different   singular value distributions.}

\begin{figure}
	\centering
	\includegraphics[width=0.4\textwidth]{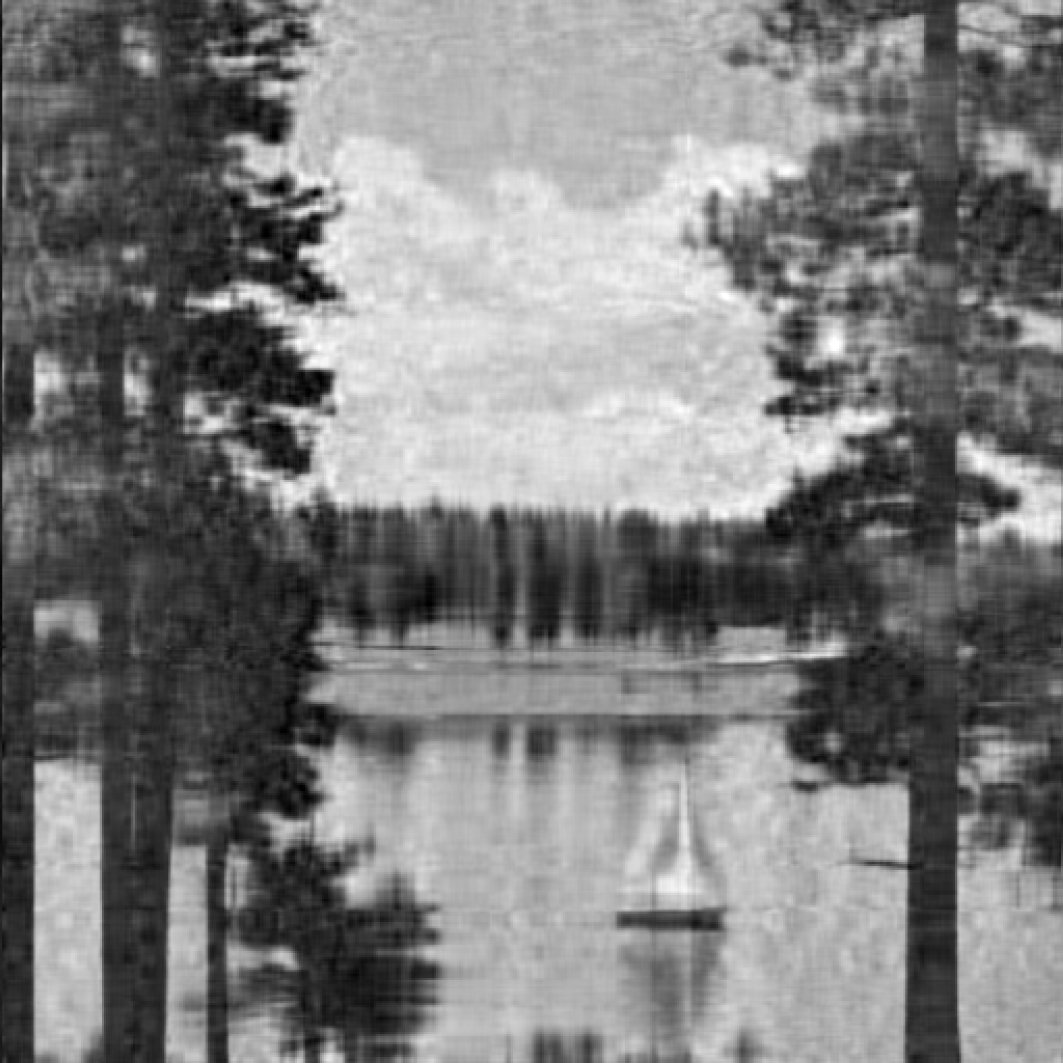}
	\includegraphics[width=0.4\textwidth]{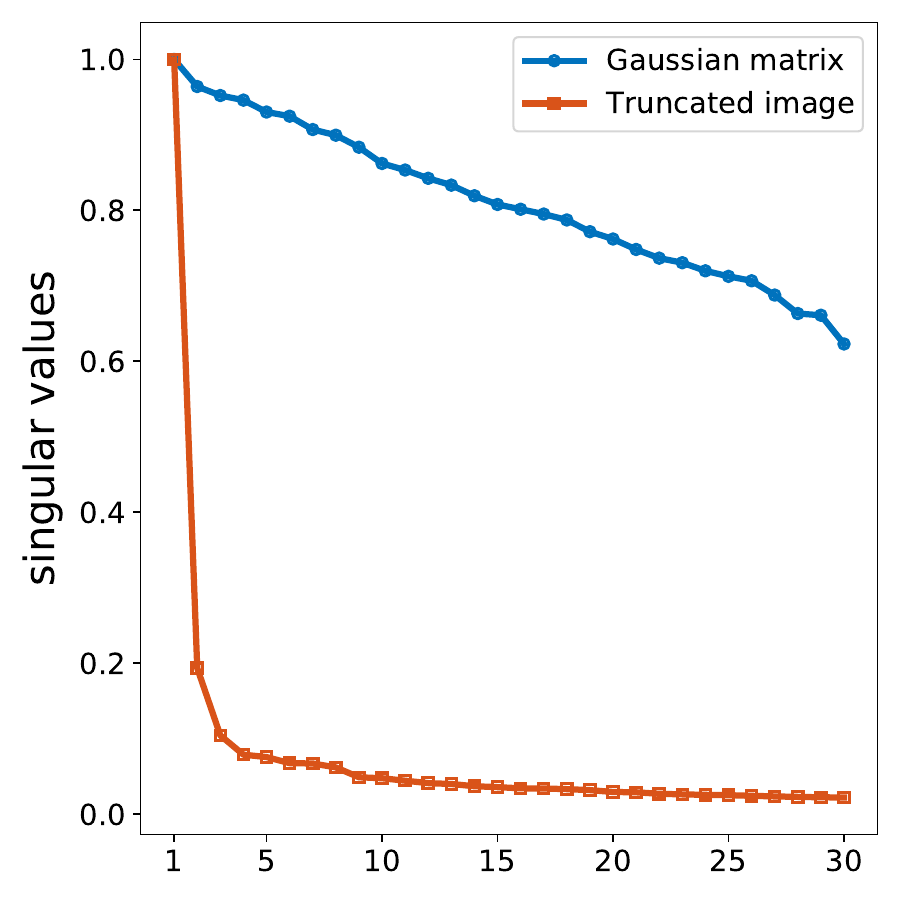}
	\caption{Truncated {\tt lake} image (left) and normalized singular values  of a Gaussian matrix (blue circles) and of the truncated image (red squares) (right).}
	\label{fig:lake_and_val_sing}
\end{figure}
 
\REV{The set $\Omega$ is built sampling $q$ entries uniformly at random, where the number of observed entries $q$ is chosen such that the ratio $q/((n_1+n_2-r)r) = 2.6$ in both cases.}
We consider the choice $X^0 = (M)_\Omega, \ Y^0 = \projRank (X^0)$ as starting points for RAPM and iRAPM, while APM only needs  $Y^0$. 
We compare the performance of the methods by performing  a prefixed number of iterations $k_{f}=200$. In this way, we can observe the effective saving of inexact projections in iRAPM along iterations while comparing  the final reconstruction error. 

Two metrics are considered to monitor the performance of the methods:
\begin{align}
	e_{\Omega}(Y^k)  &= \REV{\frac{\| (M)_\Omega - (Y^k)_\Omega \|}{\| (M)_\Omega \|}, \label{eq:metric_ome} }\\
    e_{mse}(Y^{k_f}) &= \frac{\| M - Y^{k_{f}} \|^2}{n_1 n_2}. \label{eq:metric_mse} 
\end{align}
The former measures the error on the observed entries along the iterations, while the latter is the mean squared error on all the entries at termination and  it measures the quality of the final reconstruction.

{As a measure of the overall computational cost needed to reach iteration $k$ and to compare the three approaches under investigation,  we consider the quantity 
$$
    \mbox{\tt cost}(k)=\sum_{i=1}^k (\bar \ell_{\REV{i}}-r),
$$
as at each iteration $k$, the Lanczos process is started with a Krylov subspace of dimension $r$, then we can obtain computational saving only reducing $\bar \ell_k-r$.
}
In Tables~\ref{tab:Gauss_table} and ~\ref{tab:Lake_table} we provide statistics  of the three considered methods; \REV{for the regularized methods we used $\lambda_k = \mu_k = 16$ and} iRAPM was tested  with different values of   $\zeta$ in \eqref{eq:stopping_final}.
The reported statistics are the means over 5 random runs. 
\REV{Columns 3 and 4 show the values of the two metrics \eqref{eq:metric_ome} and \eqref{eq:metric_mse} at termination, and we also report in brackets the  standard deviation of the mean-squared error. Finally, column 5, reports the quantity $\mbox{\tt cost}(k_f)$.}
As a first comment, we observe that the choice of $\zeta$ in iRAPM does not seem to be critical, and any value smaller than $10^{-3}$ provides an overall reduction of the computational cost as the dimension of the Krylov subspaces employed in iRAPM is generally smaller than those employed in RAPM.

{In Figures~\ref{fig:eomega}-\ref{fig:krylov} we report information on the behavior of the procedures along the iterations; iRAPM was run using $\zeta = 10^{-7}$. In Figure~\ref{fig:eomega}, we plot the average over all the runs of the metric \eqref{eq:metric_ome} versus the iterations (line), while the colored bands represent the minimum-maximum range of the metric. In the remaining two figures we focus on a representative single run: in Figure ~\ref{fig:cost} we plot for each method the ratio between $\mbox{\tt cost}(k)$ and the corresponding quantity for APM.  
This way, we compare the effective computational costs of RAPM and iRAPM as a fraction of the cost of APM. The plots in Figure~\ref{fig:krylov} show two different quantities: the solid lines represent $\bar \ell_k$ versus the iterations, while the dashed lines display the number of singular values that satisfy the criterion \eqref{eq:Lanczos_error_bound_v2}, i.e. the number of singular values computed by iRAPM that are approximated with the standard accuracy requirement.}
 
{Focusing on the Gaussian matrices, we observe that  RAPM and iRAPM are comparable in terms of accuracy and only slightly worse than APM. This is outlined in the statistics in Table~\ref{tab:Gauss_table} and in Figure~\ref{fig:eomega}. Figure~\ref{fig:cost} clearly  shows that RAPM and APM require approximately the same computational effort, while the saving provided by the use of the new accuracy requirements \eqref{eq:stopping_final} in iRAPM is relevant in the first phase of the convergence history (the first 20 iterations), while from the 20th iteration onward the savings settle around the $15\%$.
This is also evident in Figure~\ref{fig:krylov}  where we can see that iRAPM employes  Krylov subspaces of dimension  lower than 40 in the first 20 outer iterations, while higher dimensions are needed by RAPM and APM.  
On the other hand, we observe that  the Lanczos procedure  provides accurate singular values in the final phase of the convergence history employing Krylov subspaces of dimension slightly larger than $r$;  this behaviour can be ascribed to the fact that $\Yck$ becomes closer and closer to a rank $r$ matrix, and ideally, in case of rank $r$ matrices, the Lanczos process stops with $\ell$ no greater than $r+1$ \cite{saad2011numerical}.
It is also interesting to observe that the approximated singular values used to compute the projection in iRAPM are, as expected,  less accurate than in RAPM and APM, except for approximately 50 consecutive outer iterations starting from the 100th, and few more isolated ones. However, the overall convergence behavior and the final accuracy are comparable to those of APM and RAPM, showing that the standard criterion, except when the iterates are close to the sought solution, yields to the  
{\it oversolving} phenomena, i.e. the singular values are approximated with an accuracy higher than those needed for the convergence of the overall procedure.}

{Concerning  the \verb*|lake| image, as before, the performances of the three methods are comparable in terms of accuracy, but this time iRAPM achieves an error similar to APM outperforming the RAPM variant.
Figure~\ref{fig:cost} (right) shows that in this case the saving of the inexact approach in term of computational cost  is even more relevant than in the case of Gaussian matrices. Indeed, in the first iterations, saving is over the $50\%$, and later on the savings are approximately the $40\%$. From Figure~\ref{fig:krylov}, we can observe that the number of singular values satisfying \eqref{eq:Lanczos_error_bound_v2} is less than $r$ along the whole process. This confirms the {\it oversolving} phenomena noticed for the Gaussian matrices.

The experiments offer different insights on the behavior of alternating projection approaches and of iRAPM.
First, all the considered methods  manage to approximate the sought matrix with good accuracy and the error on the known  entries steadily decreases along the iterations. 
Second, it is evident the reduction in the overall computational cost provided by iRAPM.
Remarkably, this computational saving  is also obtained  on the \REV{realistic case of the} \verb*|lake| image.}

\begin{table}
    \small
	\centering
	\begin{tabular}{ccccc}
		\toprule
		{\tt Method} & $\zeta$ & $e_{\Omega}$ & $e_{mse}$ & $\mbox{\tt cost}(k_f)$ \\
		\midrule
		APM          &       & 2.968e-06 & 3.309e-09 (5.609e-09) & 836 \\
		RAPM         &       & 7.638e-06 & 1.733e-08 (2.378e-08) & 870 \\
		iRAPM        & 1e-09 & 8.229e-06 & 2.101e-08 (3.046e-08) & 715 \\
		iRAPM        & 1e-07 & 8.105e-06 & 1.889e-08 (2.462e-08) & 713 \\
		iRAPM        & 1e-05 & 8.204e-06 & 2.054e-08 (2.925e-08) & 712 \\
		iRAPM        & 1e-03 & 7.606e-06 & 1.699e-08 (2.307e-08) & 728 \\
		% iRAPM        & 1e-02 & 7.720e-06 & 1.796e-08 (2.510e-08) & 10212 \\
		\bottomrule
	\end{tabular}
	\caption{Gaussian matrices: statistics of the runs, $maxit=200$.}
	\label{tab:Gauss_table}
\end{table}

\begin{table}
    \small
	\centering
	\begin{tabular}{ccccc}
		\toprule
		{\tt Method} & $\zeta$ & $e_{\Omega}$ & $e_{mse}$ & $\mbox{\tt cost}(k_f)$\\
		\midrule
		APM          &       & 5.009e-04 & 1.606e-06 (8.758e-07) & 1654 \\
		RAPM         &       & 7.785e-04 & 3.092e-06 (1.374e-06) & 1722 \\
		iRAPM        & 1e-09 & 5.633e-04 & 2.061e-06 (1.118e-06) & 956 \\
		iRAPM        & 1e-07 & 5.181e-04 & 1.733e-06 (1.089e-06) & 969 \\
		iRAPM        & 1e-05 & 5.056e-04 & 1.609e-06 (1.388e-06) & 936 \\
		iRAPM        & 1e-03 & 7.652e-04 & 2.934e-06 (1.520e-06) & 1244 \\
		% iRAPM        & 1e-02 & 7.603e-04 & 3.080e-06 (1.451e-06) & 26151 \\
		\bottomrule
	\end{tabular}
	\caption{ {\tt lake} image: statistics of the runs, $maxit=200$}
	\label{tab:Lake_table}
\end{table}

\begin{figure}
    \centering
    \includegraphics[width=0.4\textwidth]{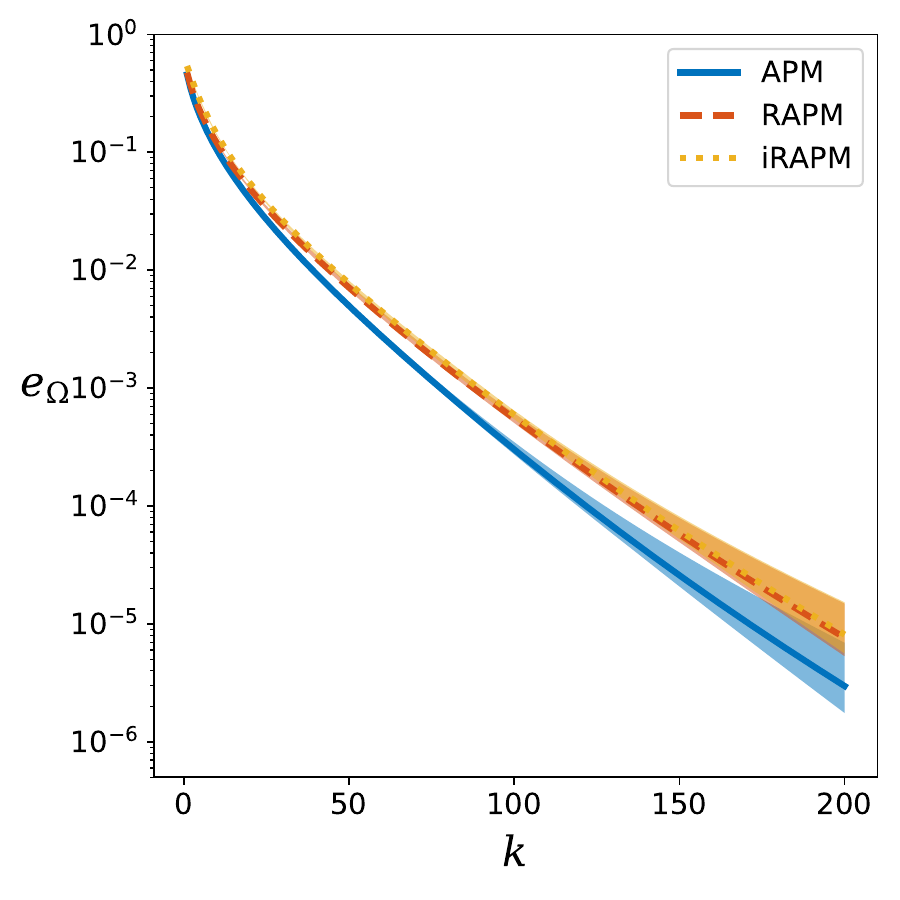}
    \includegraphics[width=0.4\textwidth]{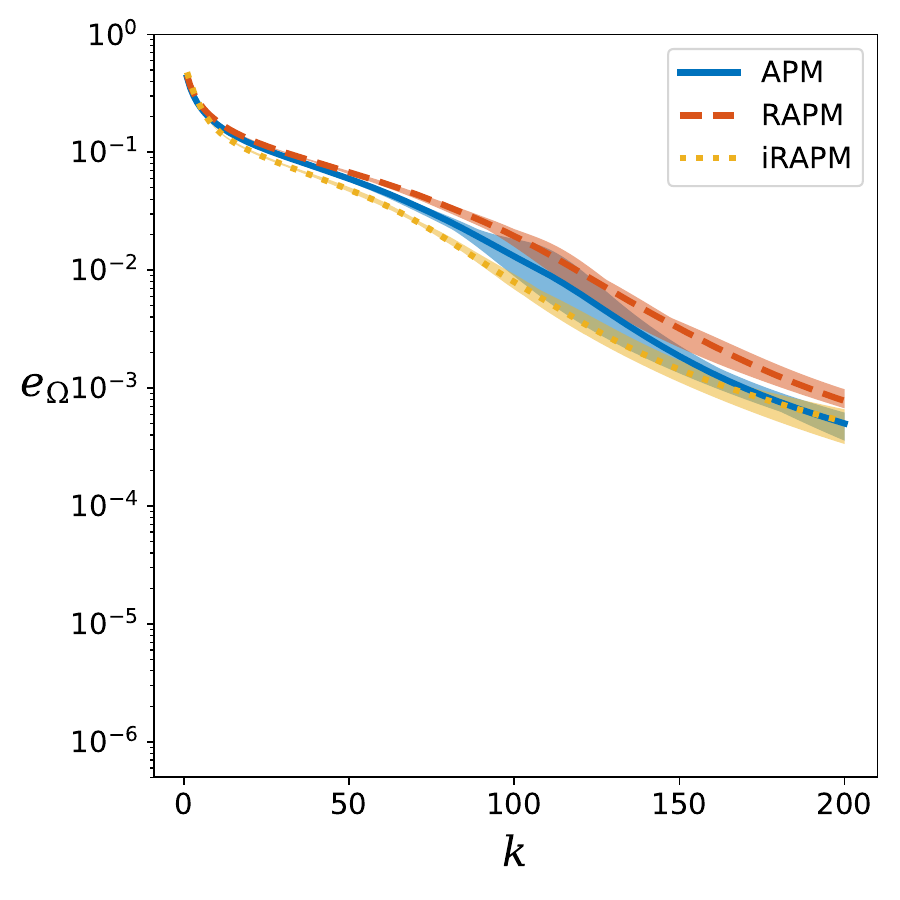}
    \vspace{-\baselineskip}
    \caption{Mean value and min-max band (over 5 runs) of $e_\Omega$ along the iterations. Gaussian matrices (left) and truncated {\tt lake} (right).}
    \label{fig:eomega}
\end{figure}

\begin{figure}
    \centering
    \includegraphics[width=0.4\textwidth]{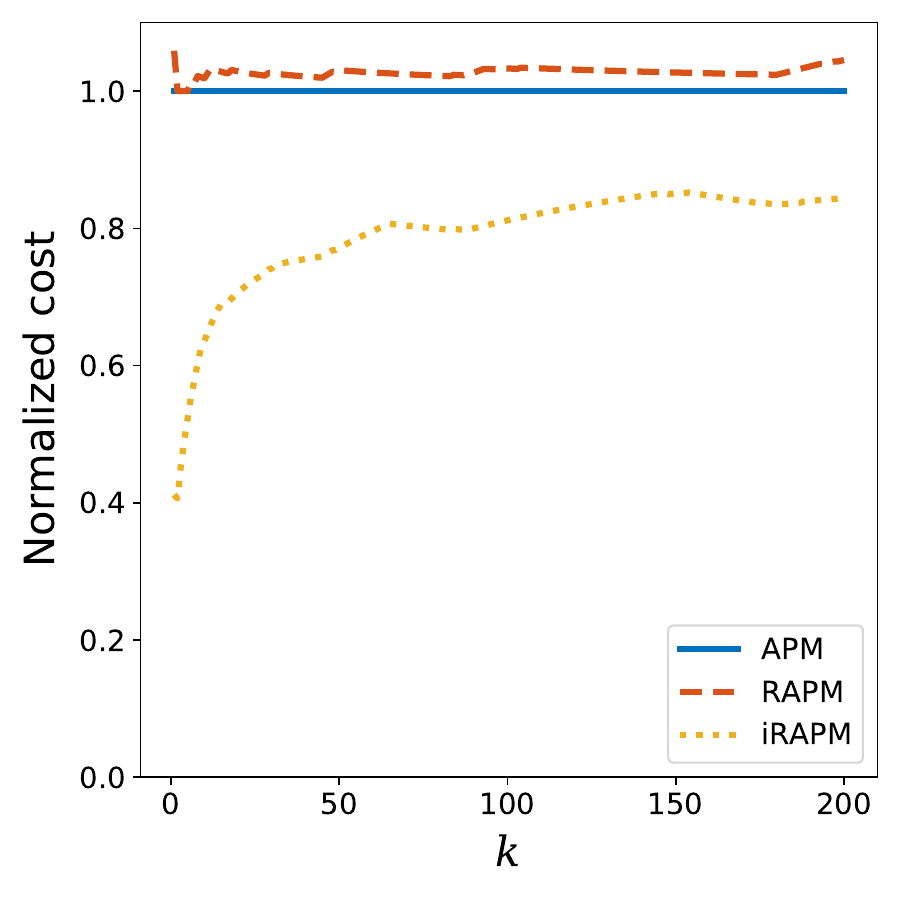}
    \includegraphics[width=0.4\textwidth]{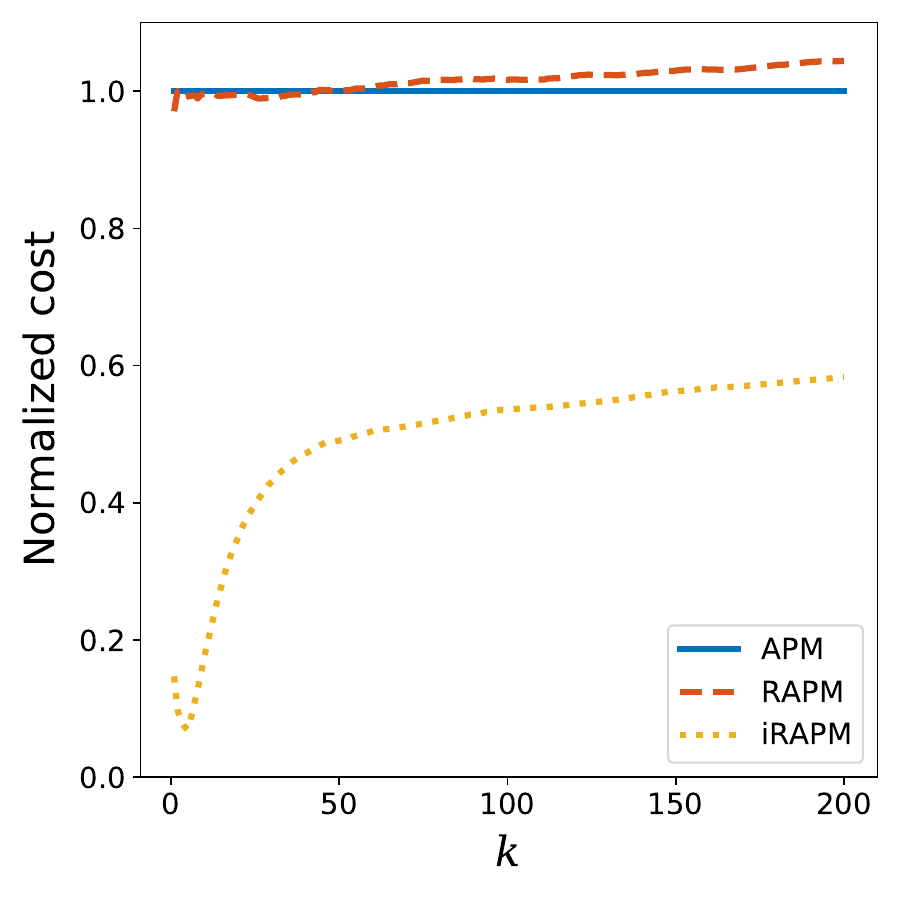}
    \vspace{-\baselineskip}
    \caption{For each method,  {\tt cost}$(k)$ over the corresponding  quantity for APM. Gaussian matrices (left) and truncated {\tt lake} (right).}
    \label{fig:cost}
\end{figure}

\begin{figure}
    \centering
    \includegraphics[width=0.4\textwidth]{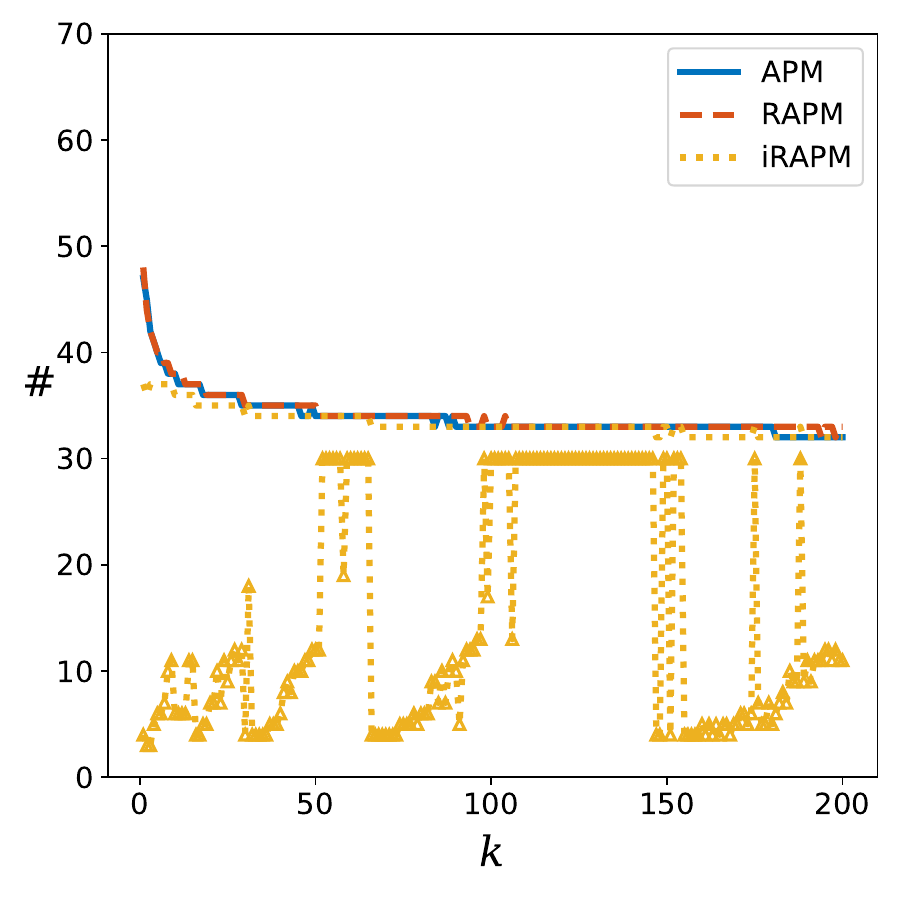}
    \includegraphics[width=0.4\textwidth]{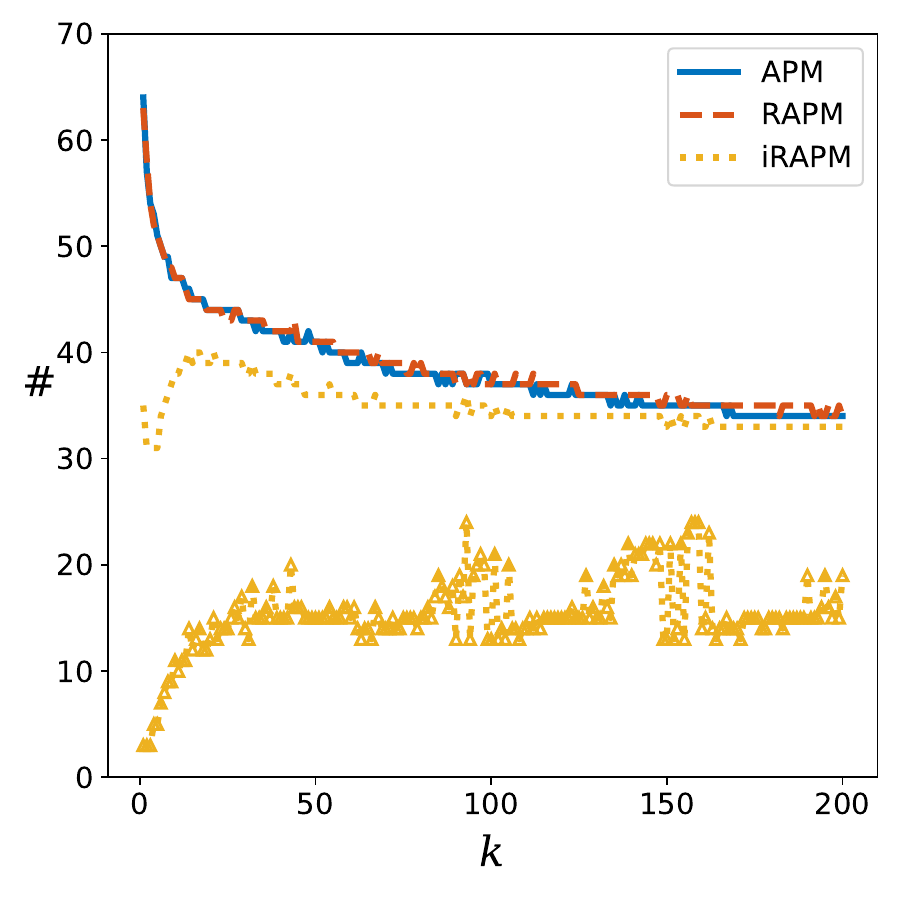}
    \vspace{-\baselineskip}
    \caption{Number of  inner iterations $\bar \ell_k$ along outer iterations and number of singular values that satisfy the criterion \eqref{eq:Lanczos_error_bound_v2}. Gaussian matrices (left) and truncated {\tt lake} (right).}
    \label{fig:krylov}
\end{figure}

%%%%%%%%%%%%%%%
% Conclusions %
%%%%%%%%%%%%%%%
\section{Conclusions and perspectives}

\REV{We have introduced an inexact regularized alternating projection method for computing a point in the intersection of two nonconvex sets. Inexact projections on one of the two sets are allowed. The convergence properties of the methods have been proved in a general setting and then specialized to affine rank minimization problems. For the latter class of problems the inexact projection on the rank level sets amounts to compute an approximate r-truncated SVD. Suitable  accuracy requirements and corresponding stopping criteria for the Krylov inner solver have been proposed. Numerical results on matrix completion problems showed that the inexact method is  in general less expensive than the exact counterpart.  
We observed that the savings in computational cost are concentrated  in the first phase of the convergence process. Then, it could be of interest to devise an adaptive strategy for choosing the parameter $\zeta$ that rules the inexactness level. A further extension is to devise adaptive strategies for the rank selection when the rank is unknown.}

\backmatter

% \bmhead{Supplementary information}

% If your article has accompanying supplementary file/s please state so here. 

% Authors reporting data from electrophoretic gels and blots should supply the full unprocessed scans for key as part of their Supplementary information. This may be requested by the editorial team/s if it is missing.

% Please refer to Journal-level guidance for any specific requirements.

\bmhead{Acknowledgements}

The authors are all members of the INdAM research group GNCS, which is kindly
acknowledged.

\bmhead{Author Contributions} All authors contributed to the study conception and design, and the theoretical content of the paper. Code implementation, data collection and analysis were performed by MS.
%The first draft of the manuscript was written by SB and all authors commented on previous versions of the manuscript.
All authors read and approved the final manuscript.

\bmhead{Funding} The research that led to the present paper was partially supported by INDAM-GNCS through Progetti di Ricerca 2025 (CUP\_E53C24001950001) and by PNRR - Missione 4 Istruzione e Ricerca - Componente C2 Investimento 1.1, Fondo per il Programma Nazionale di Ricerca e Progetti di Rilevante Interesse Nazionale (PRIN) funded by the European Commission under the NextGeneration EU programme, project ``Inverse problems in PDE: theoretical and numerical analysis'', code: 2022B32J5C, MUR D.D. financing decree n. 973 of 30th June 2023 (CUP B53D23009200006), project ``Advanced optimization METhods for automated central veIn Sign detection in multiple sclerosis from magneTic resonAnce imaging (AMETISTA)'',  code: P2022J9SNP, MUR D.D. financing decree n. 1379 of 1st September 2023 (CUP E53D23017980001), project ``Numerical Optimization with Adaptive Accuracy and Applications to Machine Learning'',  code: 2022N3ZNAX MUR D.D. financing decree n. 973 of 30th June 2023 (CUP B53D23012670006), and by Partenariato esteso FAIR ``Future Artificial Intelligence Research'' SPOKE 1 Human-Centered AI. Obiettivo 4, Project ``Mathematical and Physical approaches to innovative Machine Learning technologies (MaPLe)''.

\bmhead{Data availability statement}

The \verb|lake| image can be downloaded from \url{http://www.imageprocessingplace.com}

\section*{Declarations}

% Some journals require declarations to be submitted in a standardised format. Please check the Instructions for Authors of the journal to which you are submitting to see if you need to complete this section. If yes, your manuscript must contain the following sections under the heading `Declarations':

% \begin{itemize}
% \item Funding
% \item Conflict of interest/Competing interests (check journal-specific guidelines for which heading to use)
% \item Ethics approval and consent to participate
% \item Consent for publication
% \item Data availability 
% \item Materials availability
% \item Code availability 
% \item Author contribution
% \end{itemize}

% \noindent
% If any of the sections are not relevant to your manuscript, please include the heading and write `Not applicable' for that section.

\bmhead{Conflict of interest} All authors certify that they have no affiliations with or involvement in any organization or entity with any financial interest or non-financial interest in the subject matter or materials discussed in this manuscript. The authors have no competing interests to declare that are relevant to the content of this article.
\bmhead{Ethics approval} The authors declare that research ethics approval was not required for this study.
\bmhead{Informed consent} The authors declare that informed consents were not required for this study.
\bmhead{Open Access} This article is licensed under a Creative Commons Attribution 4.0 International License, which permits use, sharing, adaptation, distribution and reproduction in any medium or format, as long as you give appropriate credit to the original author(s) and the source, provide a link to the Creative Commons licence, and indicate if changes were made. The images or other third party material in this article are included in the article’s Creative Commons licence, unless indicated otherwise in a credit line to the material. If material is not included in the article’s Creative Commons licence and your intended use is not permitted by statutory regulation or exceeds the permitted use, you will need to obtain permission directly from the copyright holder. To view a copy of this licence, visit \url{http://creativecommons.org/licenses/by/4.0/.}

\begin{appendices}

\section{Additional proofs}

\subsection{Proof of Lemma~\ref{lem:stopping}}\label{appendix:A1}
If $y^k \in  \mathcal{P}_{B}(\yck)$,  one can always set $w^{\ell} = y^k$ and $c^{\ell} = \|y^k-\yck\|^2$ \REV{for any $\ell\in\N$}, so that conditions \eqref{eq:cond_1}-\eqref{eq:cond_3} and \eqref{eq:stopping} are trivially satisfied for all $\ell\in\N$ with $\hat{y}^{k+1}=y^k $. Then, without loss of generality, we assume that $y^k\notin \mathcal{P}_{B}(\yck)$. Hence, there holds $\norm{\hat{y}^{k+1} - \yck}^2 - \norm{y^k - \yck}^2 < 0$. 
First, we prove that there exists $\ell_\epsilon \in \N$ such that
\begin{equation}\label{eq:proof_step_1}
     \zeta ({c^{\ell}} - \norm{y^k - \yck}^2)-(\norm{\hat{y}^{k+1} - \yck}^2 - \norm{y^k - \yck}^2)>0  \quad \forall \, \ell \ge \ell_\epsilon.
\end{equation}
Indeed, by \eqref{eq:cond_2}, we have
\[
\forall \ \varepsilon > 0, \exists \ \ell_\varepsilon > 0 \, : \, c^\ell - \norm{\hat{y}^{k+1} - \yck}^2 > - \varepsilon, \quad \forall \ \ell \ge \ell_\varepsilon.
\]
Setting $\varepsilon = \frac{1-\zeta}{\zeta} (\norm{y^k-\yck}^2 - \norm{\hat{y}^{k+1} - \yck}^2)>0$, we have that there exists $\ell_{\epsilon} \in \N$ such that
\begin{equation}
    \zeta(c^\ell - \norm{\hat{y}^{k+1} - \yck}^2)  > - (1-\zeta) (\norm{y^k-\yck}^2 - \norm{\hat{y}^{k+1} - \yck}^2) \nonumber
\end{equation}
that yields \eqref{eq:proof_step_1}.
Now, we show that there exists $\hat \ell \geq   \ell_\epsilon$ such that \eqref{eq:stopping} holds. \REV{Indeed, it holds
\begin{equation}\label{eq:technical2}
        \|w^{\ell}-\yck\|^2=\|w^{\ell}-\hat{y}^{k+1}\|^2+\|\hat{y}^{k+1}-\yck\|^2+2\langle w^{\ell}-\hat{y}^{k+1},\hat{y}^{k+1}-\yck\rangle,
\end{equation}
and we observe} that there exists $\hat{\ell}\geq \ell_\epsilon$ such that, for all $\ell\geq \hat{\ell}$, it holds
\begin{align}\label{eq:technical3}
    \|w^{\ell}-\hat{y}^{k+1}\|^2+2\langle w^{\ell}-\hat{y}^{k+1},\hat{y}^{k+1}-\yck\rangle&\leq \zeta(c^{\hat{\ell}}-\|y^k-\yck\|^2)-\left(\|\hat{y}^{k+1}-\yck\|^2-\|y^k-\yck\|^2\right)\nonumber\\
    &\leq \zeta(c^{\ell}-\|y^k-\yck\|^2)-\left(\|\hat{y}^{k+1}-\yck\|^2-\|y^k-\yck\|^2\right),
\end{align}
{where the former inequality holds given that the left-hand side is converging to zero (see \eqref{eq:cond_3}) and the right-hand side is positive (see \eqref{eq:proof_step_1}), whereas the latter inequality is a consequence of \eqref{eq:cond_1}-\eqref{eq:cond_2} that ensures $c^{\hat \ell}< c^{\ell}$.} {By plugging \eqref{eq:technical3} into \eqref{eq:technical2}}  we obtain \eqref{eq:stopping}. Applying \eqref{eq:cond_1} to \eqref{eq:stopping} yields
\begin{equation*}
    \|w^{{\ell}}-\yck\|^2\leq \zeta \|\hat{y}^{k+1}-\yck\|^2+(1-\zeta)\|y^k-\yck\|^2,
\end{equation*}
which is equivalent to condition \eqref{eq:inexact} with $y^{k+1}=w^{{\ell}}$.

\subsection{Proof of Lemma~\ref{lem:properties}}\label{appendix:A2}
{\it Item (i).} We recall that $Q(y^{k+1})\leq 0$ due to equations \eqref{eq:sign-Q} and \eqref{eq:inexact}. Then, from the definition of $\{d_k\}_{k\in\N}$ in \eqref{eq:dk}, we obtain the thesis.

{\it Item (ii).} We observe that the inexactness condition \eqref{eq:inexact} can be reformulated by means of the following equivalences
\begin{align}
    &Q^{k+1}(y^{k+1}) \le \zeta Q^{k+1}(\hat{y}^{k+1}) \Leftrightarrow  Q^{k+1}(y^{k+1}) \leq Q^{k+1}(\hat{y}^{k+1}) - \frac{1-\zeta}{\zeta} Q^{k+1}(y^{k+1})\nonumber\\
    & \Leftrightarrow  \frac{\|y^{k+1}-y^k\|^2}{2\mu_k}+L(x^{k+1},y^{k+1})\leq \frac{\|\hat{y}^{k+1}-y^k\|^2}{2\mu_k}+L(x^{k+1},\hat{y}^{k+1})- \frac{1-\zeta}{\zeta} Q^{k+1}(y^{k+1})\label{eq:inexact_reformulation}.
\end{align}
Since $\hat{y}^{k+1}\in\mathcal{P}_B((y^k+\mu_kx^{k+1})/(1+\mu_k))$, we have $\hat{y}^{k+1}\in \prox_{\mu_k L(x^{k+1},\cdot)}(y^k)$, which yields
\begin{equation}\label{eq:projB}
    \frac{1}{2\mu_k}\|\hat{y}^{k+1}-y^k\|^2+L(x^{k+1},\hat{y}^{k+1})\leq L(x^{k+1},y^k).
\end{equation}
By applying the previous inequality to \eqref{eq:inexact_reformulation}, we obtain
\begin{equation}\label{ineq:interm}
    \frac{1}{2\mu_k}\|y^{k+1}-y^k\|^2+L(x^{k+1},y^{k+1})\leq L(x^{k+1},y^k)- \frac{1-\zeta}{\zeta} Q^{k+1}(y^{k+1}).
\end{equation}
Writing $Q^{k+1}(y^{k+1})$ as in \eqref{eq:Q} and rearranging the terms  of \eqref{ineq:interm} yields
\begin{equation*}
    \frac{1}{2\mu_k}\left(1+\frac{1}{\zeta}\right)\|y^{k+1}-y^k\|^2-Q^{k+1}(y^{k+1})+\left(1+\frac{1}{\zeta}\right)L(x^{k+1},y^{k+1})\leq \left(1+\frac{1}{\zeta}\right)L(x^{k+1},y^k).
\end{equation*}
By dividing both terms of the previous inequality  by a factor $1+1/\zeta$ and summing the term $\frac{1}{2\lambda_k}\|x^{k+1}-x^k\|^2$, we get
\begin{align*}
    \frac{1}{2\mu_k}\|y^{k+1}-y^k\|^2&+\frac{1}{2\lambda_k}\|x^{k+1}-x^k\|^2-\left(\frac{\zeta}{\zeta+1}\right)Q^{k+1}(y^{k+1})+L(x^{k+1},y^{k+1})\\
    &\leq \frac{1}{2\lambda_k}\|x^{k+1}-x^k\|^2+L(x^{k+1},y^k)
    \leq L(x^k,y^k),
\end{align*}
where the second inequality follows from $x^{k+1}\in\prox_{\lambda_k L(\cdot,y^k)}(x^k)$ and the definition of proximal operator.
By observing that $\lambda_k\leq \lambda_+$ and $\mu_k\leq \mu_+$, we arrive to
\begin{equation}\label{eq:decrease}
    \frac{1}{2\mu_+}\|y^{k+1}-y^k\|^2+\frac{1}{2\lambda_+}\|x^{k+1}-x^k\|^2-\left(\frac{\zeta}{\zeta+1}\right)Q^{k+1}(y^{k+1})+L(x^{k+1},y^{k+1})\leq L(x^{k},y^k).
\end{equation}
Then the thesis of item (ii) follows from \eqref{eq:decrease} by setting $a=\min\{1/(2\mu_+),1/(2\lambda_{+}),\zeta/(\zeta+1)\}$ and recalling the definition of $d_k$ in \eqref{eq:dk}.

{\it Item (iii).} The left-hand side of \REV{\eqref{eq:limit3}}  follows immediately from \eqref{eq:inexact_reformulation} by recalling the definition of $F$ in \eqref{eq:merit} and $\{\rho_k\}_{k\in\N}$  in \eqref{eq:rhok}. Regarding the right-hand side, \REV{we sum $\gamma_k$ given in \eqref{eq:rk} to both sides of \eqref{eq:projB}, thus obtaining}
\begin{align*}
    F(x^{k+1},\hat{y}^{k+1},\rho_{k+1})&\leq \frac{1}{2\mu_k}\|\hat{y}^{k+1}-y^k\|^2+L(x^{k+1},\hat{y}^{k+1})+\gamma_k\\
    &\leq L(x^{k+1},y^k)+\gamma_k\leq L(x^{k+1},y^k)+\frac{1}{2\lambda_k}\|x^{k+1}-x^{k}\|^2+\gamma_k\leq L(x^k,y^k)+\gamma_k,
\end{align*}
where \REV{the first inequality follows from $Q^{k+1}(\hat{y}^{k+1})=\min_{y\in\R^n}Q^{k+1}(y)\leq Q^{k+1}(y^{k+1})$}, and the last one holds by $x^{k+1}\in\prox_{\lambda_k L(\cdot,y^k)}(x^k)$ and the definition of proximal operator. We observe that the sequence $\{\gamma_k\}_{k\in\N}$ is non-negative due to \eqref{eq:sign-Q}. Furthermore, by summing \eqref{eq:decrease} for $k=0,\ldots,K$, it follows that
\begin{equation*}
    \frac{1}{2\mu_+}\sum_{k=0}^{K}\|y^{k+1}-y^k\|^2+\frac{1}{2\lambda_+}\sum_{k=0}^{K}\|x^{k+1}-x^k\|^2+\left(\frac{\zeta}{\zeta+1}\right)\sum_{k=0}^{K}-Q^{k+1}(y^{k+1})\leq L(x^{0},y^0)-L(x^{K+1},y^{K+1}).
\end{equation*}
By taking the limit over $K\rightarrow \infty$ and recalling the definition of $a$, we get
\begin{equation}\label{eq:xyQ_summable}
    {a} \sum_{k=0}^\infty\|x^{k+1}-x^k\|^2+\|y^{k+1}-y^k\|^2-Q^{k+1}(y^{k+1})<\infty.
\end{equation}
As a result, the sequence $\{ -Q^{k+1}(y^{k+1}) \}_{k\in\N}$ is summable and, thanks to the inexactness condition \eqref{eq:inexact}, we have that $\{ -Q^{k+1}(\hat{y}^{k+1}) \}_{k\in\N}$ is also summable. Then, \REV{\eqref{eq:limit2} holds due to condition \eqref{eq:bound}, and $\{\gamma_k\}_{k\in\N}$ is summable, which yields $\lim_{k\rightarrow \infty}\gamma_k = 0$.}
Furthermore, it holds $\lim_{k\rightarrow \infty}\|y^{k+1}-y^k\|=0$ by \eqref{eq:xyQ_summable}. Then, by applying the triangular inequality, we conclude that $\lim_{k\rightarrow \infty}\|\hat{y}^{k+1}-y^k\|=0$ and consequently $\lim_{k\rightarrow \infty}\rho_k=0$.  Hence, the thesis of item (iii) follows.

{\it Item (iv)}. Since the subdifferential of the sum of two functions, one of which is continuously differentiable, is given by the sum of the two subdifferentials \cite[Exercise 8.8]{Rockafellar-Wets-1998}, it is possible to write the following equivalences
\begin{align*}
    x^{k+1} \in \prox_{\lambda_{k} L(\cdot,y^{k})} (x^{k}) 
    &\iif 0 \in (x^{k+1} - y^{k}) + \partial \iota_A(x^{k+1}) + \frac{1}{\lambda_{k}} (x^{k+1} - x^{k}), \nonumber \\
    \hat y^{k+1} \in \prox_{\mu_{k} L(x^{k+1},\cdot)} 
    &\iif 0 \in -(x^{k+1} - \hat y^{k+1}) + \partial \iota_B(\hat y^{k+1}) + \frac{1}{\mu_{k}} (\hat y^{k+1} - y^{k}).
\end{align*}
Hence there exist $w_1^{k+1} \in \partial \iota_A(x^{k+1})$ and $w_2^{k+1} \in \partial \iota_B(\hat y^{k+1})$ 
such that
\begin{equation*}
    w_1^{k+1} = -(x^{k+1} - y^{k}) - \frac{1}{\lambda_{k}} (x^{k+1} - x^{k}), \quad
    w_2^{k+1} = (x^{k+1} - \hat y^{k+1}) - \frac{1}{\mu_{k}} (\hat y^{k+1} - y^{k}).
\end{equation*}
The subdifferential of $L$ at any point $(x,y)$ writes as $\partial L(x,y) = \{ x - y + \partial \iota_A(x) \} \times \{ y - x + \partial \iota_B(y) \}$, see e.g. \cite[Proposition 2.1]{attouch2010proximal}. Then, if we define the vectors
\begin{align*}
    v_1^{k+1} &= x^{k+1} - \hat y^{k+1} + w_1^{k+1}=-\frac{1}{\lambda_k}(x^{k+1}-x^k)-(\hat{y}^{k+1}-y^k),  \\
    v_2^{k+1} &= \hat y^{k+1} - x^{k+1} + w_2^{k+1}=-\frac{1}{\mu_k}(\hat{y}^{k+1}-y^k), 
\end{align*}
the vector $v^{k+1} = (v_1^{k+1}, v_2^{k+1})$ belongs to the subdifferential of $L$ at $(x^{k+1},\hat{y}^{k+1})$. By setting $\kappa_- = \min\{\lambda_{-},\mu_{-}\}$, the norm of $v^{k+1}$ can be upper bounded by proceeding as follows
\begin{align}
    \|v^{k+1}\|&= \left\| \left( -\frac{1}{\lambda_{k}} (x^{k+1} - x^{k}) - ( \hat y^{k+1} - y^{k}), - \frac{1}{\mu_{k}} (\hat y^{k+1} - y^{k}) \right) \right\|\nonumber\\
    &= \left\| 
    \left(
    -\frac{1}{\lambda_{k}} (x^{k+1} - x^{k}),
                        - \frac{1}{\mu_{k}} (y^{k+1}- y^{k})
                    \right) + 
                    \left(
                        - ( \hat y^{k+1} - y^{k}),
                        - \frac{1}{\mu_{k}} (\hat y^{k+1} - y^{k+1})
                    \right) \right\|\nonumber\\
    &\le \frac{1}{\kappa_-} \left\|
                    \left(
                        x^{k+1} - x^{k},
                        y^{k+1} - y^{k}
                    \right) \right\| +\left\|
                    \left(
                        -(\hat y^{k+1} - y^{k+1}),
                        - \frac{1}{\mu_{k}} (\hat y^{k+1} - y^{k+1})
                    \right) \right\| \nonumber\\
    & + \left\|- \left( y^{k+1} - y^{k}, 0 \right)\right\|\leq \left(1+\frac{1}{\kappa_-}\right)\|(x^{k+1}-x^k,y^{k+1}-y^k)\|+\sqrt{1+\frac{1}{\kappa_-^2}}\|\hat{y}^{k+1}-y^{k+1}\|\nonumber\\
    &\leq \left(1+\frac{1}{\kappa_-}\right)\|(x^{k+1}-x^k,y^{k+1}-y^k)\|+\sqrt{-\left(1+\frac{1}{\kappa_-^2}\right)\left(\frac{1-\zeta}{\zeta}\right)Q^{k+1}(y^{k+1})},\label{eq:subdifferential_crucial}
\end{align}
where the last inequality follows from condition \eqref{eq:bound}. Finally, we observe that the subdifferential of the function $F$ defined in \eqref{eq:merit} can be written as $ \partial F(x,y,\rho) = \partial L(x,y) \times \{\rho\}$. Hence, since the vector $v^{k+1}\in\R^n$ belongs to the subdifferential $\partial L(x^{k+1},\hat{y}^{k+1})$, we have that $(v^{k+1},\rho^{k+1})\in\partial F(x^{k+1},\hat{y}^{k+1},\rho^{k+1})$, and the norm of such subgradient can be upper bounded as 
\begin{align*}
    &\|(v^{k+1},\rho^{k+1})\|\leq \|v^{k+1}\|+|\rho^{k+1}|\leq \left(1+\frac{1}{\kappa_-}\right)\|(x^{k+1}-x^k,y^{k+1}-y^k)\|\\
    &+\sqrt{-\left(1+\frac{1}{\kappa_-^2}\right)\left(\frac{1-\zeta}{\zeta}\right)Q^{k+1}(y^{k+1})}
    +\sqrt{\frac{1}{\mu_k}}\|\hat{y}^{k+1}-y^k\|+\sqrt{-2\left(\frac{1-\zeta}{\zeta}\right)Q^{k+1}(y^{k+1})}\\
    &\leq \left(1+\frac{1}{\kappa_-}+\frac{1}{\sqrt{\kappa_-}}\right)\|(x^{k+1}-x^k,y^{k+1}-y^k)\|\\
    &+\sqrt{\frac{1-\zeta}{\zeta}}\left(\sqrt{\frac{1}{\kappa_-}}+\sqrt{1+\frac{1}{\kappa_-^2}}+\sqrt{2}\right)\sqrt{-Q^{k+1}(y^{k+1})},
\end{align*}
where the second inequality is due to \eqref{eq:subdifferential_crucial} and the definition of $\{\rho_k\}_{k\in\N}$ in \eqref{eq:rhok}, \REV{whereas the third one follows by employing the triangular inequality} and the inexactness condition \eqref{eq:bound}. {Then, the thesis of item (iv) is obtained by applying \eqref{eq:ineq_dk} to the previous inequality.}

%%=============================================%%
%% For submissions to Nature Portfolio Journals %%
%% please use the heading ``Extended Data''.   %%
%%=============================================%%

%%=============================================================%%
%% Sample for another appendix section			       %%
%%=============================================================%%

%% \section{Example of another appendix section}\label{secA2}%
%% Appendices may be used for helpful, supporting or essential material that would otherwise 
%% clutter, break up or be distracting to the text. Appendices can consist of sections, figures, 
%% tables and equations etc.

\end{appendices}

%%===========================================================================================%%
%% If you are submitting to one of the Nature Portfolio journals, using the eJP submission   %%
%% system, please include the references within the manuscript file itself. You may do this  %%
%% by copying the reference list from your .bbl file, paste it into the main manuscript .tex %%
%% file, and delete the associated \verb+\bibliography+ commands.                            %%
%%===========================================================================================%%

\bibliography{bibliography}% common bib file

%% BioMed_Central_Bib_Style_v1.01

\begin{thebibliography}{44}
% BibTex style file: bmc-mathphys.bst (version 2.1), 2014-07-24
\ifx \bisbn   \undefined \def \bisbn  #1{ISBN #1}\fi
\ifx \binits  \undefined \def \binits#1{#1}\fi
\ifx \bauthor  \undefined \def \bauthor#1{#1}\fi
\ifx \batitle  \undefined \def \batitle#1{#1}\fi
\ifx \bjtitle  \undefined \def \bjtitle#1{#1}\fi
\ifx \bvolume  \undefined \def \bvolume#1{\textbf{#1}}\fi
\ifx \byear  \undefined \def \byear#1{#1}\fi
\ifx \bissue  \undefined \def \bissue#1{#1}\fi
\ifx \bfpage  \undefined \def \bfpage#1{#1}\fi
\ifx \blpage  \undefined \def \blpage #1{#1}\fi
\ifx \burl  \undefined \def \burl#1{\textsf{#1}}\fi
\ifx \doiurl  \undefined \def \doiurl#1{\url{https://doi.org/#1}}\fi
\ifx \betal  \undefined \def \betal{\textit{et al.}}\fi
\ifx \binstitute  \undefined \def \binstitute#1{#1}\fi
\ifx \binstitutionaled  \undefined \def \binstitutionaled#1{#1}\fi
\ifx \bctitle  \undefined \def \bctitle#1{#1}\fi
\ifx \beditor  \undefined \def \beditor#1{#1}\fi
\ifx \bpublisher  \undefined \def \bpublisher#1{#1}\fi
\ifx \bbtitle  \undefined \def \bbtitle#1{#1}\fi
\ifx \bedition  \undefined \def \bedition#1{#1}\fi
\ifx \bseriesno  \undefined \def \bseriesno#1{#1}\fi
\ifx \blocation  \undefined \def \blocation#1{#1}\fi
\ifx \bsertitle  \undefined \def \bsertitle#1{#1}\fi
\ifx \bsnm \undefined \def \bsnm#1{#1}\fi
\ifx \bsuffix \undefined \def \bsuffix#1{#1}\fi
\ifx \bparticle \undefined \def \bparticle#1{#1}\fi
\ifx \barticle \undefined \def \barticle#1{#1}\fi
\bibcommenthead
\ifx \bconfdate \undefined \def \bconfdate #1{#1}\fi
\ifx \botherref \undefined \def \botherref #1{#1}\fi
\ifx \url \undefined \def \url#1{\textsf{#1}}\fi
\ifx \bchapter \undefined \def \bchapter#1{#1}\fi
\ifx \bbook \undefined \def \bbook#1{#1}\fi
\ifx \bcomment \undefined \def \bcomment#1{#1}\fi
\ifx \oauthor \undefined \def \oauthor#1{#1}\fi
\ifx \citeauthoryear \undefined \def \citeauthoryear#1{#1}\fi
\ifx \endbibitem  \undefined \def \endbibitem {}\fi
\ifx \bconflocation  \undefined \def \bconflocation#1{#1}\fi
\ifx \arxivurl  \undefined \def \arxivurl#1{\textsf{#1}}\fi
\csname PreBibitemsHook\endcsname

%%% 1
\bibitem[\protect\citeauthoryear{von Neumann}{2010}]{Neumann-etal-1950}
\begin{bbook}
\bauthor{\bsnm{Neumann}, \binits{J.}}:
\bbtitle{Functional Operators}.
\bsertitle{Annals of Mathematics Studies},
vol. \bseriesno{22}.
\bpublisher{Princeton University Press},
\blocation{Princeton, NJ}
(\byear{2010})
\end{bbook}
\endbibitem

%%% 2
\bibitem[\protect\citeauthoryear{Bauschke and
  Borwein}{1996}]{bauschke1996projection}
\begin{barticle}
\bauthor{\bsnm{Bauschke}, \binits{H.H.}},
\bauthor{\bsnm{Borwein}, \binits{J.M.}}:
\batitle{On projection algorithms for solving convex feasibility problems}.
\bjtitle{SIAM Rev.}
\bvolume{38}(\bissue{3}),
\bfpage{367}--\blpage{426}
(\byear{1996})
\end{barticle}
\endbibitem

%%% 3
\bibitem[\protect\citeauthoryear{Bauschke and Borwein}{1993}]{bauschke1993}
\begin{barticle}
\bauthor{\bsnm{Bauschke}, \binits{H.H.}},
\bauthor{\bsnm{Borwein}, \binits{J.M.}}:
\batitle{On the convergence of von {N}eumann’s alternating projection
  algorithm for two sets.}
\bjtitle{Set-Valued Anal.}
\bvolume{1},
\bfpage{185}--\blpage{212}
(\byear{1993})
\end{barticle}
\endbibitem

%%% 4
\bibitem[\protect\citeauthoryear{Bauschke et~al.}{1997}]{bauschke1994}
\begin{barticle}
\bauthor{\bsnm{Bauschke}, \binits{H.H.}},
\bauthor{\bsnm{Borwein}, \binits{J.M.}},
\bauthor{\bsnm{Lewis}, \binits{A.S.}}:
\batitle{The method of cyclic projections for closed convex sets in {H}ilbert
  space}.
\bjtitle{Contemp. Math.}
\bvolume{204},
\bfpage{1}--\blpage{38}
(\byear{1997})
\end{barticle}
\endbibitem

%%% 5
\bibitem[\protect\citeauthoryear{Escalante and Raydan}{2011}]{escalanteraydan}
\begin{bbook}
\bauthor{\bsnm{Escalante}, \binits{R.}},
\bauthor{\bsnm{Raydan}, \binits{M.}}:
\bbtitle{Alternating Projection Methods}.
\bpublisher{Society for Industrial and Applied Mathematics},
\blocation{Philadelphia, PA}
(\byear{2011}).
\doiurl{10.1137/9781611971941} .
\burl{https://epubs.siam.org/doi/abs/10.1137/9781611971941}
\end{bbook}
\endbibitem

%%% 6
\bibitem[\protect\citeauthoryear{Behling et~al.}{2021}]{santos2021}
\begin{barticle}
\bauthor{\bsnm{Behling}, \binits{R.}},
\bauthor{\bsnm{Bello-Cruz}, \binits{Y.}},
\bauthor{\bsnm{Santos}, \binits{L.R.}}:
\batitle{Infeasibility and error bound imply finite convergence of alternating
  projections}.
\bjtitle{SIAM J. Optim.}
\bvolume{31}(\bissue{4}),
\bfpage{2863}--\blpage{2892}
(\byear{2021})
\doiurl{10.1137/20M1358669}
{\href{https://arxiv.org/abs/https://doi.org/10.1137/20M1358669}{{https://doi.org/10.1137/20M1358669}}}
\end{barticle}
\endbibitem

%%% 7
\bibitem[\protect\citeauthoryear{Drusvyatskiy}{2013}]{drusvyatskiyPhDthesis2013}
\begin{barticle}
\bauthor{\bsnm{Drusvyatskiy}, \binits{D.}}:
\batitle{Slope and {G}eometry in {V}ariational {M}athematics}.
\bjtitle{Ph.D. Thesis}
\bvolume{47},
\bfpage{669}--\blpage{681}
(\byear{2013})
\end{barticle}
\endbibitem

%%% 8
\bibitem[\protect\citeauthoryear{Drusvyatskiy
  et~al.}{2015}]{drusvyatskiy2015transversality}
\begin{barticle}
\bauthor{\bsnm{Drusvyatskiy}, \binits{D.}},
\bauthor{\bsnm{Ioffe}, \binits{A.D.}},
\bauthor{\bsnm{Lewis}, \binits{A.S.}}:
\batitle{Transversality and alternating projections for nonconvex sets}.
\bjtitle{Found. Comput. Math.}
\bvolume{15}(\bissue{6}),
\bfpage{1637}--\blpage{1651}
(\byear{2015})
\end{barticle}
\endbibitem

%%% 9
\bibitem[\protect\citeauthoryear{Lewis and Malick}{2008}]{lewis2008alternating}
\begin{barticle}
\bauthor{\bsnm{Lewis}, \binits{A.S.}},
\bauthor{\bsnm{Malick}, \binits{J.}}:
\batitle{Alternating projections on manifolds}.
\bjtitle{Math. Oper. Res.}
\bvolume{33}(\bissue{1}),
\bfpage{216}--\blpage{234}
(\byear{2008})
\end{barticle}
\endbibitem

%%% 10
\bibitem[\protect\citeauthoryear{Lewis et~al.}{2009}]{lewis2009local}
\begin{barticle}
\bauthor{\bsnm{Lewis}, \binits{A.S.}},
\bauthor{\bsnm{Luke}, \binits{D.R.}},
\bauthor{\bsnm{Malick}, \binits{J.}}:
\batitle{Local linear convergence of alternating and averaged nonconvex
  projections}.
\bjtitle{Found. Comput. Math.}
\bvolume{9}(\bissue{4}),
\bfpage{485}--\blpage{513}
(\byear{2009})
\end{barticle}
\endbibitem

%%% 11
\bibitem[\protect\citeauthoryear{Noll and Rondepierre}{2016}]{Noll-etal-2016}
\begin{barticle}
\bauthor{\bsnm{Noll}, \binits{D.}},
\bauthor{\bsnm{Rondepierre}, \binits{A.}}:
\batitle{On {L}ocal {C}onvergence of the {M}ethod of {A}lternating
  {P}rojections}.
\bjtitle{Found. Comput. Math.}
\bvolume{16},
\bfpage{425}--\blpage{455}
(\byear{2016})
\end{barticle}
\endbibitem

%%% 12
\bibitem[\protect\citeauthoryear{Attouch et~al.}{2010}]{attouch2010proximal}
\begin{barticle}
\bauthor{\bsnm{Attouch}, \binits{H.}},
\bauthor{\bsnm{Bolte}, \binits{J.}},
\bauthor{\bsnm{Redont}, \binits{P.}},
\bauthor{\bsnm{Soubeyran}, \binits{A.}}:
\batitle{Proximal alternating minimization and projection methods for nonconvex
  problems: An approach based on the {K}urdyka-{{\L}}ojasiewicz inequality}.
\bjtitle{Math. Oper. Res.}
\bvolume{35}(\bissue{2}),
\bfpage{438}--\blpage{457}
(\byear{2010})
\end{barticle}
\endbibitem

%%% 13
\bibitem[\protect\citeauthoryear{Birgin et~al.}{2003}]{Birgin2003}
\begin{barticle}
\bauthor{\bsnm{Birgin}, \binits{E.G.}},
\bauthor{\bsnm{Mart{\'\i}nez}, \binits{J.M.}},
\bauthor{\bsnm{Raydan}, \binits{M.}}:
\batitle{Inexact spectral projected gradient methods on convex sets}.
\bjtitle{IMA J. Numer. Anal.}
\bvolume{23}(\bissue{4}),
\bfpage{539}--\blpage{559}
(\byear{2003})
\end{barticle}
\endbibitem

%%% 14
\bibitem[\protect\citeauthoryear{Ferreira et~al.}{2022}]{Ferreira-2022}
\begin{barticle}
\bauthor{\bsnm{Ferreira}, \binits{O.P.}},
\bauthor{\bsnm{Lemes}, \binits{M.}},
\bauthor{\bsnm{Prudente}, \binits{L.F.}}:
\batitle{On the inexact scaled gradient projection method}.
\bjtitle{Comput. Optim. Appl.}
\bvolume{81},
\bfpage{91}--\blpage{125}
(\byear{2022})
\end{barticle}
\endbibitem

%%% 15
\bibitem[\protect\citeauthoryear{Bonettini
  et~al.}{2023}]{Bonettini-Ochs-Prato-Rebegoldi-2023}
\begin{barticle}
\bauthor{\bsnm{Bonettini}, \binits{S.}},
\bauthor{\bsnm{Ochs}, \binits{P.}},
\bauthor{\bsnm{Prato}, \binits{M.}},
\bauthor{\bsnm{Rebegoldi}, \binits{S.}}:
\batitle{An abstract convergence framework with application to inertial inexact
  forward--backward methods}.
\bjtitle{Comput. Optim. Appl.}
\bvolume{84},
\bfpage{319}--\blpage{362}
(\byear{2023})
\end{barticle}
\endbibitem

%%% 16
\bibitem[\protect\citeauthoryear{Recht et~al.}{2010}]{Fazel_review}
\begin{barticle}
\bauthor{\bsnm{Recht}, \binits{B.}},
\bauthor{\bsnm{Fazel}, \binits{M.}},
\bauthor{\bsnm{Parrilo}, \binits{P.A.}}:
\batitle{Guaranteed minimum-rank solutions of linear matrix equations via
  nuclear norm minimization}.
\bjtitle{SIAM Rev.}
\bvolume{52}(\bissue{3}),
\bfpage{471}--\blpage{501}
(\byear{2010})
\doiurl{10.1137/070697835}
{\href{https://arxiv.org/abs/https://doi.org/10.1137/070697835}{{https://doi.org/10.1137/070697835}}}
\end{barticle}
\endbibitem

%%% 17
\bibitem[\protect\citeauthoryear{Saad}{2011}]{saad2011numerical}
\begin{bbook}
\bauthor{\bsnm{Saad}, \binits{Y.}}:
\bbtitle{Numerical Methods for Large Eigenvalue Problems: Revised Edition}.
\bpublisher{SIAM}, \blocation{???}
(\byear{2011})
\end{bbook}
\endbibitem

%%% 18
\bibitem[\protect\citeauthoryear{Kruger and Thao}{2016}]{kruger}
\begin{barticle}
\bauthor{\bsnm{Kruger}, \binits{A.Y.}},
\bauthor{\bsnm{Thao}, \binits{N.H.}}:
\batitle{Regularity of collections of sets and convergence of inexact
  alternating projections.}
\bjtitle{J. Convex Anal.}
\bvolume{23},
\bfpage{823}--\blpage{847}
(\byear{2016})
\end{barticle}
\endbibitem

%%% 19
\bibitem[\protect\citeauthoryear{Díaz~Millán
  et~al.}{2021}]{ferreira_inexact_2021}
\begin{barticle}
\bauthor{\bsnm{Díaz~Millán}, \binits{R.}},
\bauthor{\bsnm{Ferreira}, \binits{O.P.}},
\bauthor{\bsnm{Prudente}, \binits{L.F.}}:
\batitle{Alternating conditional gradient method for convex feasibility
  problems}.
\bjtitle{Comput. Optim. Appl.}
\bvolume{80},
\bfpage{245}--\blpage{269}
(\byear{2021})
\end{barticle}
\endbibitem

%%% 20
\bibitem[\protect\citeauthoryear{Drusvyatskiy and
  Lewis}{2019}]{drusvyatskiy2019local}
\begin{barticle}
\bauthor{\bsnm{Drusvyatskiy}, \binits{D.}},
\bauthor{\bsnm{Lewis}, \binits{A.S.}}:
\batitle{Local linear convergence for inexact alternating projections on
  nonconvex sets}.
\bjtitle{Vietnam J. Math.}
\bvolume{47},
\bfpage{669}--\blpage{681}
(\byear{2019})
\end{barticle}
\endbibitem

%%% 21
\bibitem[\protect\citeauthoryear{Luke}{2013}]{luke2013prox}
\begin{barticle}
\bauthor{\bsnm{Luke}, \binits{D.R.}}:
\batitle{Prox-regularity of rank constraint sets and implications for
  algorithms}.
\bjtitle{J. Math. Imaging Vis.}
\bvolume{47},
\bfpage{231}--\blpage{238}
(\byear{2013})
\end{barticle}
\endbibitem

%%% 22
\bibitem[\protect\citeauthoryear{Bonettini
  et~al.}{2020}]{Bonettini-Prato-Rebegoldi-2020}
\begin{barticle}
\bauthor{\bsnm{Bonettini}, \binits{S.}},
\bauthor{\bsnm{Prato}, \binits{M.}},
\bauthor{\bsnm{Rebegoldi}, \binits{S.}}:
\batitle{Convergence of inexact forward-backward algorithms using the
  forward-backward envelope}.
\bjtitle{SIAM J. Optim.}
\bvolume{30}(\bissue{4}),
\bfpage{3069}--\blpage{3097}
(\byear{2020})
\end{barticle}
\endbibitem

%%% 23
\bibitem[\protect\citeauthoryear{Bonettini
  et~al.}{2021}]{Bonettini-Prato-Rebegoldi-2021}
\begin{barticle}
\bauthor{\bsnm{Bonettini}, \binits{S.}},
\bauthor{\bsnm{Prato}, \binits{M.}},
\bauthor{\bsnm{Rebegoldi}, \binits{S.}}:
\batitle{New convergence results for the inexact variable metric
  forward-backward method}.
\bjtitle{Appl. Math. Comput.}
\bvolume{392},
\bfpage{125719}
(\byear{2021})
\end{barticle}
\endbibitem

%%% 24
\bibitem[\protect\citeauthoryear{Guerreiro and
  Aguiar}{2003}]{guerreiro2003estimation}
\begin{bchapter}
\bauthor{\bsnm{Guerreiro}, \binits{R.F.C.}},
\bauthor{\bsnm{Aguiar}, \binits{P.M.Q.}}:
\bctitle{Estimation of rank deficient matrices from partial observations:
  two-step iterative algorithms}.
In: \bbtitle{Int. Workshop on Energy Minimization Methods in Comp. Vision and
  Pattern Recognition},
pp. \bfpage{450}--\blpage{466}
(\byear{2003}).
\bcomment{Springer}
\end{bchapter}
\endbibitem

%%% 25
\bibitem[\protect\citeauthoryear{Wang et~al.}{2021}]{wang2021fast}
\begin{barticle}
\bauthor{\bsnm{Wang}, \binits{H.}},
\bauthor{\bsnm{Cai}, \binits{J.F.}},
\bauthor{\bsnm{Wang}, \binits{T.}},
\bauthor{\bsnm{Wei}, \binits{K.}}:
\batitle{Fast {C}adzow’s algorithm and a gradient variant}.
\bjtitle{J. Sci. Comput.}
\bvolume{88}(\bissue{2}),
\bfpage{41}
(\byear{2021})
\end{barticle}
\endbibitem

%%% 26
\bibitem[\protect\citeauthoryear{Bolte et~al.}{2007}]{Bolte2007}
\begin{barticle}
\bauthor{\bsnm{Bolte}, \binits{J.}},
\bauthor{\bsnm{Daniilidis}, \binits{A.}},
\bauthor{\bsnm{Lewis}, \binits{A.}}:
\batitle{The { \L{}}ojasiewicz inequality for nonsmooth subanalytic functions
  with applications to subgradient dynamical systems}.
\bjtitle{SIAM J. Optim.}
\bvolume{17}(\bissue{4}),
\bfpage{1205}--\blpage{1223}
(\byear{2007})
\end{barticle}
\endbibitem

%%% 27
\bibitem[\protect\citeauthoryear{Rockafellar
  et~al.}{1998}]{Rockafellar-Wets-1998}
\begin{bbook}
\bauthor{\bsnm{Rockafellar}, \binits{R.T.}},
\bauthor{\bsnm{Wets}, \binits{R.J.-B.}},
\bauthor{\bsnm{Wets}, \binits{M.}}:
\bbtitle{Variational Analysis}.
\bpublisher{Springer},
\blocation{Berlin}
(\byear{1998})
\end{bbook}
\endbibitem

%%% 28
\bibitem[\protect\citeauthoryear{Bauschke and Noll}{2013}]{bauschke2013}
\begin{barticle}
\bauthor{\bsnm{Bauschke}, \binits{H.H.}},
\bauthor{\bsnm{Noll}, \binits{D.}}:
\batitle{On cluster points of alternating projections}.
\bjtitle{Serdica Math. J.}
\bvolume{39},
\bfpage{355}--\blpage{364}
(\byear{2013})
\end{barticle}
\endbibitem

%%% 29
\bibitem[\protect\citeauthoryear{Attouch et~al.}{2007}]{attouch2007}
\begin{barticle}
\bauthor{\bsnm{Attouch}, \binits{H.}},
\bauthor{\bsnm{Redont}, \binits{P.}},
\bauthor{\bsnm{Soubeyran}, \binits{A.}}:
\batitle{A new class of alternating proximal minimization algorithms with
  costs-to-move}.
\bjtitle{SIAM J. Optim.}
\bvolume{18}(\bissue{3}),
\bfpage{1061}--\blpage{1081}
(\byear{2007})
\doiurl{10.1137/060657248}
\end{barticle}
\endbibitem

%%% 30
\bibitem[\protect\citeauthoryear{Bolte et~al.}{2014}]{Bolte-etal-2014}
\begin{barticle}
\bauthor{\bsnm{Bolte}, \binits{J.}},
\bauthor{\bsnm{Sabach}, \binits{S.}},
\bauthor{\bsnm{Teboulle}, \binits{M.}}:
\batitle{Proximal alternating linearized minimization for nonconvex and
  nonsmooth problems}.
\bjtitle{Math. Program.}
\bvolume{146}(\bissue{1--2}),
\bfpage{459}--\blpage{494}
(\byear{2014})
\end{barticle}
\endbibitem

%%% 31
\bibitem[\protect\citeauthoryear{Frankel et~al.}{2015}]{Frankel-etal-2015}
\begin{barticle}
\bauthor{\bsnm{Frankel}, \binits{P.}},
\bauthor{\bsnm{Garrigos}, \binits{G.}},
\bauthor{\bsnm{Peypouquet}, \binits{J.}}:
\batitle{Splitting methods with variable metric for {K}urdyka--{{\L}}ojasiewicz
  functions and general convergence rates}.
\bjtitle{J. Opt. Theory Appl.}
\bvolume{165},
\bfpage{874}--\blpage{900}
(\byear{2015})
\end{barticle}
\endbibitem

%%% 32
\bibitem[\protect\citeauthoryear{Attouch and
  Bolte}{2009}]{attouch20009proximal}
\begin{barticle}
\bauthor{\bsnm{Attouch}, \binits{H.}},
\bauthor{\bsnm{Bolte}, \binits{J.}}:
\batitle{On the convergence of the proximal algorithm for nonsmooth functions
  involving analytic features}.
\bjtitle{Math. Program.}
\bvolume{116},
\bfpage{5}--\blpage{16}
(\byear{2009})
\end{barticle}
\endbibitem

%%% 33
\bibitem[\protect\citeauthoryear{Candes and Recht}{2009}]{candes2012exact}
\begin{barticle}
\bauthor{\bsnm{Candes}, \binits{E.}},
\bauthor{\bsnm{Recht}, \binits{B.}}:
\batitle{Exact matrix completion via convex optimization}.
\bjtitle{Found. Comput. Math.}
\bvolume{9},
\bfpage{717}--\blpage{772}
(\byear{2009})
\end{barticle}
\endbibitem

%%% 34
\bibitem[\protect\citeauthoryear{Golub and Van~Loan}{2013}]{golub2013matrix}
\begin{bbook}
\bauthor{\bsnm{Golub}, \binits{G.H.}},
\bauthor{\bsnm{Van~Loan}, \binits{C.F.}}:
\bbtitle{Matrix Computations}.
\bpublisher{JHU press}, \blocation{???}
(\byear{2013})
\end{bbook}
\endbibitem

%%% 35
\bibitem[\protect\citeauthoryear{Yu et~al.}{2022}]{Yu2022}
\begin{barticle}
\bauthor{\bsnm{Yu}, \binits{P.}},
\bauthor{\bsnm{Li}, \binits{G.}},
\bauthor{\bsnm{Pong}, \binits{T.K.}}:
\batitle{Kurdyka–{\l{}}ojasiewicz {E}xponent via {I}nf-projection}.
\bjtitle{Found. Comput. Math.}
\bvolume{22},
\bfpage{1171}--\blpage{1217}
(\byear{2022})
\end{barticle}
\endbibitem

%%% 36
\bibitem[\protect\citeauthoryear{Li and Pong}{2018}]{Li2018}
\begin{barticle}
\bauthor{\bsnm{Li}, \binits{G.}},
\bauthor{\bsnm{Pong}, \binits{T.K.}}:
\batitle{Calculus of the {E}xponent of {K}urdyka–{\l{}}ojasiewicz
  {I}nequality and {I}ts {A}pplication to {L}inear {C}onvergence of
  {F}irst-{O}rder {Methods}}.
\bjtitle{Found. Comput. Math.}
\bvolume{18},
\bfpage{1199}--\blpage{1232}
(\byear{2018})
\end{barticle}
\endbibitem

%%% 37
\bibitem[\protect\citeauthoryear{Larsen}{1998}]{larsen1998lanczos}
\begin{botherref}
\oauthor{\bsnm{Larsen}, \binits{R.M.}}:
Lanczos bidiagonalization with partial reorthogonalization.
Dept. Computer Science, Aarhus University, Technical report, DAIMI
\textbf{PB}(537)
(1998)
\end{botherref}
\endbibitem

%%% 38
\bibitem[\protect\citeauthoryear{Paige and Saunders}{1982}]{paige1982lsqr}
\begin{barticle}
\bauthor{\bsnm{Paige}, \binits{C.C.}},
\bauthor{\bsnm{Saunders}, \binits{M.A.}}:
\batitle{Lsqr: An algorithm for sparse linear equations and sparse least
  squares}.
\bjtitle{ACM Trans. Math. Softw.}
\bvolume{8}(\bissue{1}),
\bfpage{43}--\blpage{71}
(\byear{1982})
\end{barticle}
\endbibitem

%%% 39
\bibitem[\protect\citeauthoryear{Simon and Zha}{2000}]{Simon2000}
\begin{barticle}
\bauthor{\bsnm{Simon}, \binits{H.D.}},
\bauthor{\bsnm{Zha}, \binits{H.}}:
\batitle{Low-rank matrix approximation using the {L}anczos bidiagonalization
  process with applications}.
\bjtitle{SIAM J. Sci. Comput.}
\bvolume{21}(\bissue{6}),
\bfpage{2257}--\blpage{2274}
(\byear{2000})
\doiurl{10.1137/s1064827597327309}
\end{barticle}
\endbibitem

%%% 40
\bibitem[\protect\citeauthoryear{Ramlatchan
  et~al.}{2018}]{ramlatchan2018survey}
\begin{barticle}
\bauthor{\bsnm{Ramlatchan}, \binits{A.}},
\bauthor{\bsnm{Yang}, \binits{M.}},
\bauthor{\bsnm{Liu}, \binits{Q.}},
\bauthor{\bsnm{Li}, \binits{M.}},
\bauthor{\bsnm{Wang}, \binits{J.}},
\bauthor{\bsnm{Li}, \binits{Y.}}:
\batitle{A survey of matrix completion methods for recommendation systems}.
\bjtitle{Big Data Min. Anal.}
\bvolume{1}(\bissue{4}),
\bfpage{308}--\blpage{323}
(\byear{2018})
\end{barticle}
\endbibitem

%%% 41
\bibitem[\protect\citeauthoryear{Nguyen et~al.}{2019}]{nguyen2019low}
\begin{barticle}
\bauthor{\bsnm{Nguyen}, \binits{L.T.}},
\bauthor{\bsnm{Kim}, \binits{J.}},
\bauthor{\bsnm{Shim}, \binits{B.}}:
\batitle{Low-rank matrix completion: A contemporary survey}.
\bjtitle{IEEE Access}
\bvolume{7},
\bfpage{94215}--\blpage{94237}
(\byear{2019})
\end{barticle}
\endbibitem

%%% 42
\bibitem[\protect\citeauthoryear{Agarwal et~al.}{2018}]{agarwal2018model}
\begin{bchapter}
\bauthor{\bsnm{Agarwal}, \binits{A.}},
\bauthor{\bsnm{Amjad}, \binits{M.J.}},
\bauthor{\bsnm{Shah}, \binits{D.}},
\bauthor{\bsnm{Shen}, \binits{D.}}:
\bctitle{Model agnostic time series analysis via matrix estimation}.
In: \bbtitle{Proceedings of the ACM on Measurement and Analysis of Computing
  Systems},
vol. \bseriesno{2},
pp. \bfpage{1}--\blpage{39}.
\bpublisher{ACM New York, NY, USA}, \blocation{???}
(\byear{2018})
\end{bchapter}
\endbibitem

%%% 43
\bibitem[\protect\citeauthoryear{Silei et~al.}{2023}]{Silei_energies}
\begin{botherref}
\oauthor{\bsnm{Silei}, \binits{M.}},
\oauthor{\bsnm{Bellavia}, \binits{S.}},
\oauthor{\bsnm{Superchi}, \binits{F.}},
\oauthor{\bsnm{Bianchini}, \binits{A.}}:
Recovering corrupted data in wind farm measurements: A matrix completion
  approach.
Energies
\textbf{16}(4)
(2023)
\doiurl{10.3390/en16041674}
\end{botherref}
\endbibitem

%%% 44
\bibitem[\protect\citeauthoryear{Bellavia et~al.}{2021}]{Bellavia21}
\begin{barticle}
\bauthor{\bsnm{Bellavia}, \binits{S.}},
\bauthor{\bsnm{Gondzio}, \binits{J.}},
\bauthor{\bsnm{Porcelli}, \binits{M.}}:
\batitle{A relaxed interior point method for low-rank semidefinite programming
  problems with applications to matrix completion}.
\bjtitle{J. Sci. Comput.}
\bvolume{89}(\bissue{2}),
\bfpage{46}
(\byear{2021})
\end{barticle}
\endbibitem

\end{thebibliography}
%% if required, the content of .bbl file can be included here once bbl is generated
%%\input sn-article.bbl

\end{document}